\newtheorem{thm}{Theorem}[section]
\newtheorem{cor}[thm]{Corollary}
\newtheorem{hypothesis}[thm]{Hypothesis}
\newtheorem{prop}[thm]{Proposition}
\newtheorem{defn}[thm]{Definition}
\newtheorem{rem}[thm]{Remark}
\def\bE{\mathbb{E}}
\def\bU{\mathbb{U}}
\def\cF{\mathcal{F}}
\def\bR{\mathbb{R}}
\def\P{\mathbb{P}}
\def\PP{\mathbb{P}}
\def\dd{{\rm d}}
\def\tn{|\!|\!|\!}
\def\bx{\mathbf{x}}
\def\by{\mathbf{y}}
\def\bJ{\mathbb{J}}
\def\cG{\mathcal{G}}
\def\cL{\mathcal{L}}
\def\cB{\mathcal{B}}
\def\cU{\mathcal{U}}
\def\R{\mathbb{R}}
\def\bN{\mathbb{N}}
\def\cK{\mathcal{K}}
\def\cD{\mathcal{D}}
\def\cQ{\mathcal{Q}}
\def\bE{\mathbb{E}}
\def\cF{\mathcal{F}}
\def\bR{\mathbb{R}}
\def\P{\mathbb{P}}
\def\PP{\mathbb{P}}
\def\bA{\mathbf{A}}
\def\dd{{\rm d}}
\def\tn{|\!|\!|\!}
\def\bx{\mathbf{x}}
\def\by{\mathbf{y}}
\def\bJ{\mathbb{J}}
\def\cG{\mathcal{G}}
\def\cL{\mathcal{L}}
\def\cB{\mathcal{B}}
\def\tF{\tilde{\mathcal{F}}}
\def\tX{\tilde{\mathbf{x}}}
\def\tY{\tilde{Y}}
\def\tW{\tilde{W}}
\def\tZ{\tilde{Z}}
\def\cU{\mathcal{U}}
\def\R{\mathbb{R}}
\def\bN{\mathbb{N}}
\def\bU{\mathbb{U}}
\def\cK{\mathcal{K}}
\def\cF{\mathcal{F}}
\def\cQ{\mathbf{Q}}
\def\bQ{\mathbf{Q}}
\journal{Stochastic Processes and applications}
\begin{document}

\begin{frontmatter}

%% Title, authors and addresses

%% use the tnoteref command within \title for footnotes;
%% use the tnotetext command for the associated footnote;
%% use the fnref command within \author or \address for footnotes;
%% use the fntext command for the associated footnote;
%% use the corref command within \author for corresponding author footnotes;
%% use the cortext command for the associated footnote;
%% use the ead command for the email address,
%% and the form \ead[url] for the home page:
%%
%% \title{Title\tnoteref{label1}}
%% \tnotetext[label1]{}
%% \author{Name\corref{cor1}\fnref{label2}}
%% \ead{email address}
%% \ead[url]{home page}
%% \fntext[label2]{}
%% \cortext[cor1]{}
%% \address{Address\fnref{label3}}
%% \fntext[label3]{}

\title{Infinite horizon Stochastic Optimal Control for Volterra equations with completely monotone kernels}

%% use optional labels to link authors explicitly to addresses:
%% \author[label1,label2]{<author name>}
%% \address[label1]{<address>}
%% \address[label2]{<address>}

\author{Elisa Mastrogiacomo}

\address{University of Milano Bicocca, Department of Statistics and Quantitative Methods\\
 Piazza Ateneo Nuovo n 1, 20123 Milano, Italy}

\begin{abstract}
%% Text of abstract
     The aim of the paper is to study an optimal control problem on infinite horizon for an infinite dimensional integro-differential equation with completely monotone kernels, where we assume that the noise enters the system when
we introduce a control. We start by reformulating the state equation into a semilinear evolution
equation which can be treated by semigroup methods. The application to optimal control provide
other interesting result and require a precise description of the properties of the generated semigroup.
The main tools consist in studying the differentiability of the forward-backward system with infinite horizon corresponding with the reformulated problem and the proof of existence and uniqueness of of mild solutions
to the corresponding HJB equation. 
\end{abstract}

\begin{keyword}
Abstract integro-differential equation \sep Analytic semigroup \sep Backward Stochastic differential equations Elliptic PDEs \sep Hilbert
spaces \sep Mild Solutions
%% keywords here, in the form: keyword \sep keyword

%% MSC codes here, in the form: \MSC code \sep code
%% or \MSC[2008] code \sep code (2000 is the default)
\MSC 34F05 \sep  45D05 \sep 60H10 \sep 93E20
\end{keyword}

\end{frontmatter}
\section{Introduction}\label{sec:intro}
In this paper we study a stochastic optimal control problem with infinite horizon for an infinite dimensional integral equation of Volterra type on a separable Hilbert space.
Our starting point is a controlled stochastic Volterra equation of the form 
\begin{align}\label{eq:Volterra}
    \begin{cases}
       \frac{\dd}{\dd t} \int_{-\infty}^t a(t-s)u(s)\dd s= A u(t)+ f(u(t))\\
         \qquad \qquad \qquad \qquad + g \,[\, r(u(t),\gamma(t))
            + \dot{W}(t)\, ], \qquad t\in [0,T]\\
       u(t)=u_0(t), \qquad t\leq 0.
    \end{cases}
\end{align}
for a process $u$ in a Hilbert space $H$, where $W(t), \ t\geq 0$ is a cylindrical Wiener process defined on a suitable probability space $(\Omega, \cF, (\cF_t)_{t\geq 0},\P) $ with values into a (possibly different) Hilbert space $\Xi$; the kernel $a$ is completely monotonic, locally integrable and
singular at $0$; $A$ is a linear operator which generates an analytical semigroup; $f$ is a Lipschitz continuous function from $H$ into itself; $g$ is 
bounded linear mapping from $H$ into $L_2(\Xi,H)$ (the space of Hilbert-Schmidt operators from $\Xi$ to $H$). The function $r$ is a bounded Borel measurable mapping from $ H \times \cU$ into $\Xi$. Also, the control is modellized by the predictable
process $\gamma$ with values in some specified subset $\cU$ (the set of control actions) of a third Hilbert space $U$. 
We notice that the control enters the system together with the noise. This special structure is imposed by our techniques; however the presence of the operator $r$ allows more generality. 

The optimal control problem that we wish to treat in this paper consists in minimizing the 
functional cost 
\begin{align}\label{eq:cost}
    \bJ(u_0,\gamma)=\bE \int_0^\infty e^{-\lambda t}\ell(u(t),\gamma(t))\dd t,
\end{align}
where $\ell: H \times U \to \R$ is a given real bounded function and  $\lambda$ is any positive number. 

Our first task is to provide a semigroup setting for the problem, by the state space setting first 
introduced by Miller in \cite{miller/1974} and Desch and Miller \cite{desch/miller/1988} and recently revised, for the stochastic case, in Homan \cite{homan}, Bonaccorsi and Desch \cite{bonaccorsi/desch}, Bonaccorsi and Mastrogiacomo \cite{BoMa-JEE}. Within this approach, equation \eqref{eq:Volterra} is reformulated into an abstract evolution equation 
without memory on a different Hilbert space $X$. Namely, we rewrite equation \eqref{eq:Volterra} as
\begin{align}\label{eq:state-eq}
   \begin{cases}
    \dd \bx(t)= B \bx(t)\dd t+ (I-B)P \, f( J \bx(t))\dd t \\
       \qquad \qquad \qquad + (I-B)P g (r(J \bx(t),\gamma(t))\dd t+\dd W(t))\\
     \bx(0)=x,
   \end{cases}
\end{align}
where the initial datum $x$ is obtained from $u_0$ by a suitable transformation. 
 Moreover, $B$ is the infinitesimal generator of an analytic semigroup $e^{tB}$
on $X$. $P: H \to X$ is a linear mapping which acts as a sort of projection into the space 
$X$. $J: \ D(J) \subset X \to H$ is an unbounded linear functional on $X$,
which gives a way going from $\bx$ to the solution to problem \eqref{eq:Volterra}.
In fact, it turns out that $u$ has the representation 
\begin{align*} 
     u(t)=\begin{cases}
           J \bx(t), & t>0,\\
          u_0(t), & t\leq 0.
            \end{cases}
\end{align*}
For more details, we refer to the original papers \cite{bonaccorsi/desch,homan}.
Further, the optimal control problem, reformulated into the state setting $X$, 
consists in minimizing the cost functional
\begin{align*}
     \bJ(x,\gamma)=\bE \int_0^\infty e^{-\lambda t}\ell(J \bx(t),\gamma(t))\dd t 
\end{align*}
(where the initial condition $u_0$ is substituted by $x$ and the process $u$ is substituted by $J\bx$). It follows that $\gamma$ is an optimal control
for the original Volterra equation if and only if it is an optimal control for that state equation
\eqref{eq:state-eq}.  
%The advantage of the above reformulation lays in the fact that it allows to apply a standard procedure to prove the existence and uniqueness for the solution.  

To our knowledge, this paper is the first attempt to study optimal control problems with infinite horizon for stochastic Volterra equations. In order to handle the control problem we introduce the weak reformulation; in this setting we wish to perform the standard program of synthesis of the optimal control, by proving the so-called fundamental relation for the value function and then solving in the weak sense the closed loop equation. The main step is to study the stationary Hamilton-Jacobi-Bellman equation corresponding with our problem. In our case, this is given by
\begin{align}\label{eq:HJB}\tag{HJB}
    %\begin{cases}
         \cL v(x)=\lambda v(x)-\psi(x,\nabla v (x)(I-B) P g(Jx)), \qquad x\in X, 
   % \end{cases}
\end{align} 
where $\cL$ is the infinitesimal generator of the Markov semigroup corresponding to the process
$\bx$: 
\begin{multline*}
     \cL[h](x)=\frac{1}{2}{\rm Tr} [(I-B)Pg\,\nabla^2h(x)g^*\,P^*(I-B)^*]+\\
    + \langle Bx+(I-B)P f(Jx),\nabla h(x)\rangle.
\end{multline*}
and $\psi$ is the Hamiltonian of the problem, defined in terms of $\ell$, i.e.
\begin{align}\label{eq:ham}
      \psi(x,z):= \inf_{\gamma \in \cU}\left\{ \ell(J x,\gamma)+ z\cdot r(x,\gamma)\right\}
\end{align}
Here $\nabla h(x) \in H^\star$ denotes the G\^ateaux derivative at point $x\in H$ and $\nabla^2 h$ is the second G\^ateaux derivative, identified with an element of $L(H)$ (the vector space of linear and continuous functionals on $H$).
%We notice that two different kinds of difficulties add up in above equation. Firstly, 
%$\cL$ is highly degenerate; indeed $\nabla^2 h(x)$ is multiplied only by $(I-B)Pg$. Therefore the Kolmogorov equation associated with the state equation has very poor smoothing properties. Moreover, unbounded terms are involved both in the operator $\cL$ and in the nonlinearity $\psi$. 
Equations of type \ref{eq:HJB} have been studied by probabilistic approach in several 
finite dimensional situations \cite{Buckdan,Darling/Pardoux,Pardoux/Peng/92,Pardoux/Rascanu/99} and extended in infinite dimension by Fuhrman and Tessitore \cite{FuTe/2004}. In particular, an equation like \ref{eq:HJB} is studied without any nondegeneracy assumption on $g$ and a unique G\^ateaux differentiable mild solution to \ref{eq:HJB} is found by an approach based on backward stochastic differential equations (BSDEs). 

The main difficulty, in our case, is due to the presence of unbounded 
terms such as $(I-B)P g$ which forces us to prove extra regularity for the solution to \ref{eq:HJB}.

In this paper we develop the BSDE techniques along the lines initiated by Fuhrman and Tessitore in \cite{FuTe/2004} and Hu and Tessitore \cite{HuTe/2006}.  
In particular, the BSDE corresponding with our problem is given by
\begin{align}\label{eq:BSDE}\tag{BSDE}
    %\begin{cases}
           \dd Y(\tau) = \lambda Y(\tau)\dd t- \psi(\bx(\tau;x),Z(\tau))\dd t +Z(\tau) \dd W(\tau), \tau\geq 0
    %\end{cases}
\end{align}
together with a suitable growth condition (which substitutes the 
final condition of the finite horizon case). 
In the above formula, $\bx(\cdot;x)$ is the solution of the equation \eqref{eq:state-eq} starting from $x\in X$ at time $t=0$. 

%. The equation is equipped with the infinite growth condition that 
%\begin{align*}
 %     \bE \sup_{\tau\geq 0} e^{p\beta \tau}|Y(\tau)|^p + \bE \left( \int_0^\infty e^{p\beta \tau} \|Z(\tau)\|_{\Xi^\star}^2 \dd \tau\right)^{p/2} <\infty,
%\end{align*}
%where $p,\beta$ are suitable parameters satisfying $p>2$ and $\beta<0$.

%Moreover, the constant $\beta$ depend on the asymptotic behaviour of $\bx$ as well as on the nonlinearity $\psi$. 

Our purpose is to prove that the solution of \eqref{eq:BSDE} exists and it is unique for any positive $\lambda$ and that the mild solution of the \eqref{eq:HJB} equation is given in terms of the solution of the \eqref{eq:BSDE}. In particular, if we set $v(x):=Y(0)$ it follows that $v$ is the mild solution of \eqref{eq:HJB}. 
As notice before, in our case it is not enough to prove that $v$ is once (G\^ateaux) differentiable to give sense to equation \ref{eq:HJB}. Indeed the occurrence 
of the term $\nabla v(\cdot) (I-B)Pg$, together with the fact that $P$  maps $H$ into an interpolation space of $D(B)$, forces us to prove that the map $h \mapsto \nabla v(x)(I-B)^\theta Pg[h]$ for suitable $\theta\in (0,1)$ extends to a continuous map on $H$. To do that  we start proving that this extra regularity holds, in a suitable sense, for the state equation \eqref{eq:state-eq} and then that it is conserved if we differentiate in G\^ateaux sense the backward equation with respect to the process $\bx$.   
On the other side, we can prove that 
if the map $h \mapsto \nabla v(x)(I-B)^\theta [h]$ extends to a continuous function on $H$ then 
the processes $t \mapsto v(\bx(t;x))$ and $W$ admit joint quadratic variation in any interval $[t,T]$ and this is given by 
\begin{align*}
      \int_t^T \nabla v(\bx(\tau;x))(I-B)Pg\,\dd \tau.
\end{align*}
This is result is standard and is done by an application of the Malliavin calculus (on a finite time horizon).

We can then come back to the control problem and using the probabilistic representation of the unique mild solution to equation \eqref{eq:HJB} we easily show existence of an optimal feedback law.  Indeed we are able to prove that the so called \emph{fundamental relation} which states that
$\bJ(x,\gamma) \geq v(x)$ and equality holds if and only if the feedback law 
$$
         \gamma(\tau)\in \Gamma(\bx^\gamma(\tau;x),\nabla v(\bx^\gamma(\tau;x)(I-B)P\,g(\bx(\tau;x))))
$$
is verified,
with $\Gamma(x,v)$ the set of minimizers in \eqref{eq:ham} and $\bx^\gamma(\cdot;x)$ the solution of equation \eqref{eq:state-eq} corresponding with the control $\gamma$. We refer to Section \ref{sec:syn-cont} for precise statements and additional results.

The paper is organized as follows: in the next section we give some useful notation and we introduce the main assumptions on the
coefficients of the problem. In Section \ref{sec:anal-set} we reformulate the problem
into a semilinear abstract evolution equation and we study the properties of
leading operator. In Section \ref{sec:exun-abs} we prove the first main result of the paper: we
determine existence and uniqueness of the solution of the reformulated equation \eqref{eq:state-eq}. In Section \ref{sec:exun-orig} we turn back to the original Volterra equation \eqref{eq:Volterra} in order to establish even in this case existence and uniqueness of the solutions. In section \ref{sec:forward} we give some properties of the forward equation corresponding with the reformulated uncontrolled problem. In Section \ref{sec:backward} we study existence, uniqueness and properties of the backward stochastic equation \eqref{eq:BSDE}. In Section \ref{sec:HJB} we proceed with the study of the Hamilton Jacobi Bellman equation and, finally, in section \ref{sec:syn-cont} we employ the results proved in the preceedings sections in order to perform the standard synthesis of the optimal control.

%%%%%%%%%%%%%%%%%%%%%%    SEZ 2 NOTAZIONI
%%%%%%%%%%%%%%%%%%%%%%    SEZ 2 NOTAZIONI
\section{Notations and main assumptions} \label{sec:not-ass}
The norm of an element $x$ of a Banach space $E$ will be denoted by $|x|_E$ or simply
$|x|$ if no confusion is possible. If $F$ is another Banach space, $L(E,F)$ denotes the space 
of bounded linear operators from $E$ to $F$, endowed with the usual operator norm. 

The letters $\Xi,\, H, \, U $ will always denote Hilbert spaces. Scalar product is denoted 
$\langle \,\cdot\, , \, \cdot \, \rangle$, with a subscript to specify the space, if 
necessary. All Hilbert space are assumed to be real and separable. 

By a cylindrical Wiener process with values in a Hilbert space $\Xi$, defined on a probability space $(\Omega,\cF,\PP)$, we mean a family $(W(t))_{t\geq 0}$ of linear mappings from
$\Xi$ to $L^2(\Omega)$, denoted $\xi \mapsto \langle \xi, W(t)\rangle$ such that
\begin{enumerate}
     \item for every $\xi \in \Xi, ( \langle \xi, W(t) \rangle )_{t\geq 0}$ is a real 
     (continuous) Wiener process;
     \item for every $\xi_1,\xi_2 \in \Xi$ and $t\geq 0$, $\bE(\langle \xi_1, W(t) \rangle \langle \xi_2, W(t) \rangle) = \langle \xi_1, \xi_2 \rangle$.
\end{enumerate}

$(\cF_t)_{t\geq 0}$ will denote the natural filtration of $W$, augmented with the family 
of $\P$-null sets. The filtration $(\cF_t)_{t\geq 0}$ satisfies the usual conditions. 
All the concepts of measurability for stochastic processes refer to this filtration. 
By $\cB(\Gamma)$ we mean the Borel $\sigma$-algebra of any topological space
$\Gamma$.

In the sequel we will refer to the following class of stochastic processes with values 
in an Hilbert space $K$:
\begin{enumerate}
      \item  $L^p(\Omega; L^2(0,T;K))$ defines, for $T>0$ and $p\geq 1$,  
        the space of equivalence classes of progressively measurable processes 
        $y: \Omega \times [0,T) \to K$, such that 
        $$
               |y|^p_{L^p(\Omega; L^2(0,T;K))} := \bE \left[ \int_0^T |y(s)|^2_K \dd s\right]^{p/2}
                <\infty.
        $$
        Elements of $L^p(\Omega; L^2(0,T;K))$ are identified up to modification. 
      \item $L^p(\Omega; C([0,T];K))$ defines, for $T>0$ and $p\geq 1$,  
        the space of equivalence classes of progressively measurable processes 
        $y: \Omega \times [0,T) \to K$, with continuous paths in $K$, such that 
        the norm 
        $$
               |y|^p_{L^p(\Omega; C([0,T];K))} := \bE \left[ \sup_{t \in [0,T]} |y(t)|^p_K \right]
        $$
        is finite. Elements of $L^p(\Omega; C([0,T];K))$ are identified up to indistinguishability.
        \end{enumerate}
        
        We also recall notation and basic facts on a class of differentiable maps acting 
        among Banach spaces, particularly suitable for our purposes (we refer the reader to
        Fuhrman and Tessitore \cite{FuTe/2002} or Ladas and Lakshmikantham \cite[Section 1.6]{LaL}
         (1970) for details and ). 
        Let now $X,Y,V$ denote Banach spaces. We say that a mapping $F: X \to V$ 
        belongs to the class $\cG^1(X,V)$ if it is continuous, G\^ateaux differentiable on X,
        and its G\^ateaux derivative $\nabla F: X \to L(X,V)$ is strongly continuous.
        
        The last requirement is equivalent to the fact that for every $h\in X$ the map
        $\nabla F(\cdot)h: X \to V$ is continuous. Note that $\nabla F: X \to L(X,V)$
        is not continuous in general if $L(X,V)$ is endowed with the norm operator 
        topology; clearly, if it happens then $F$ is Fr\'echet differentiable on $X$.
        It can be proved that if $F\in \cG^1(X,V)$ then $(x,h) \mapsto \nabla F(x)h$
        is continuous from $X \times X$ to $V$; if, in addition, $G$ is in $\cG^1(V,Z)$ then
        $G(F)$ is in $\cG^1(X,Z)$ and the chain rule holds: 
        $\nabla(G(F))(x)=\nabla G(F(x))\nabla F(x)$. When $F$ depends on additional 
        arguments, the previous definitions and properties have obvious generalizations.
        %In addition to the ordinary chain rule stated above, a chain rule for the Malliavin derivative 
        %operator holds: for the reader convenience we refer to Section \ref{sec:Mallreg} for a 
        %brief introduction to this subject.

        Moreover, we assume the following. 
     \medskip
       \begin{hypothesis}\label{hp:a,A,f,g,r,W}
              \begin{enumerate}
                 \item The kernel $a: (0,\infty) \to \R$ is completely monotonic, locally
                  integrable, with $a(0+)=+\infty$. The singularity in $0$ shall satisfy some technical
                 conditions that we make precise in Section \ref{sec:anal-set}. 
                 \item $A: D(A) \subset H \to H$ is a sectorial operator in $H$. Thus $A$ generates an
                 analytic semigroup $e^{tA}$. 
                 \item The function $f:  H \to H$ is measurable and continuously 
                 G\^ateaux differentiable; moreover
                 there exist constants $L>0$ and $C>0$ such that 
                 \begin{align*}
                        & |f(u)-f(v)| \leq L |u-v|, \qquad \ u,v \in H;\\
                        & |f(0)| + \|\nabla_u f(u)\|_{\mathcal{L}(H)} \leq  C, \qquad  \ u\in H
                 \end{align*}
                 \item The mapping $g$ belongs to $L_2(\Xi,H)$ (the space of Hilbert-Schmidt operators from $\Xi$ to $H$);
                 %; moreover, $g: H \to L_2(\Xi,H)$ is continuously G\^ateaux differentiable and there exists constants $L>0$ and $C>0$ such that
                 %\begin{align*}
                  %      & \|g(u)-g(v)\|_{L_2(\Xi,H)} \leq L |u-v|, \qquad \ u,v \in H;\\
                   %     &  \|g(u)\|_{L_2(\xi,H)} \leq  C, \qquad  \ u\in H
                 %\end{align*}
                 \item The function $r: H \times U \to \Xi$ is Borel measurable and there 
                  exists a positive constant $C>0$ such that 
                 \begin{align*}
                         & |r(u_1,\gamma)-r(u_2,\gamma)| \leq C|u_1-u_2|, \qquad
                         u_1,u_2 \in H, \gamma \in U;\\
                         & |r(u,\gamma)|_\Xi \leq C, \qquad u\in H, \gamma \in U.
                 \end{align*}
                 \item The process $(W(t))_{t\geq 0}$ is a cylindrical Wiener process defined on 
                 a complete probability space $(\Omega,\cF,(\cF_t)_{t\geq 0},\P)$ with values in the
                 Hilbert space $\Xi$.
              \end{enumerate}
        \end{hypothesis}
         \medskip
        The initial condition satisfies a global exponential bound as well as a linear 
         growth bound as $t\to 0$:
         \begin{hypothesis}\label{hp:dato-in}
              \begin{enumerate}
                  \item There exist $M_1 >0$ and $\omega >0$ such that $|u_0(t)|\leq M_1 e^{\omega t}$ for all $t\leq 0$;
                  \item There exist $M_2>0$ and $\tau >0$ such that $|u_0(t)-u_0(0)|\leq M_2 |t|$ for all 
                 $t \in [-\tau,0]$;
               \item $u_0(0)\in H_{\varepsilon}$ for some $\varepsilon \in (0,1/2)$. 
              \end{enumerate}
          \end{hypothesis}
          \medskip
        Concerning the cost functional $\ell$ we make the
        following general assumptions:   
        \begin{hypothesis}\label{hp:l}
            The function $\ell:  H \times U \to \R$ 
               is continuous and bounded.
                %there exist a positive constant $C$
               %such that for any $\gamma\in U$ the following
               % bound is satisfied
               % \begin{align*}
               %       & |\ell(u_1,\gamma)-\ell(u_2,\gamma)|\leq |u_1-u_2|, \qquad u_1,u_2 \in H\\
                %     &  0 \leq |l(u,\gamma)| \leq C, \qquad u\in H.
             %   \end{align*}
       \end{hypothesis}
        
        We consider the following notion of weak solution for the Volterra equation \eqref{eq:Volterra}.
        \begin{defn}
              We say that a process $u=(u(t))_{t\geq 0}$ is a weak solution to equation \eqref{eq:Volterra}
             if $u$ is an adapted, $p$-mean integrable (for any $p\geq 1$), continuous 
             $H$-valued predictable process and the identity
              \begin{multline*}
                    \int_{-\infty}^t a(t-s)\langle u(s),\zeta\rangle_H \dd s = 
                     \langle \bar{u},\zeta \rangle_H + \int_0^t \langle u(s),A^\star\zeta\rangle_H  \dd s \\
                      +\int_0^t \langle f(u(s)), \zeta \rangle \dd s + \int_0^t \langle g \, r(u(s),\gamma(s)),\zeta \rangle_H \dd s
                      +  \langle g W(t),\zeta\rangle  
              \end{multline*}
           holds $\P$-a.s. for arbitrary $t\in [0,T]$ and $\zeta \in D(A^\star)$,  with $A^\star$ being the adjoint of the operator $A$
           and 
           \begin{align*}
                   \bar{u}=\int_{-\infty}^0 a(-s) u_0(s) \dd s.
           \end{align*}
        \end{defn}
       
\section{The analytical setting}\label{sec:anal-set}
     A completely monotone kernel $a: (0,\infty) \to \R$ is a continuous, monotone decreasing function, infinitely often derivable, such that
$$
       (-1)^n \frac{\dd^n }{\dd t^n}a(t) \geq 0, \quad t\in (0,\infty), \ n=0,1,2,\dots
$$   
By Bernstein's theorem, $a$ is completely monotone if and only if there exists a positive measure 
   $\nu $ on $[0,\infty)$ such that
  $$
      a(t)=\int_{[0,\infty)} e^{-\kappa t}\nu(\dd \kappa), \quad t>0.
$$
Under the assumption $a\in L^1(0,1)$, it holds that the Laplace transform $\hat{a}$ is well defined and it is given in terms of $\nu$ by
\begin{align*}
      \hat{a}(s)=\int_{[0,\infty)} \frac{1}{s+\kappa}\nu(\dd \kappa).
\end{align*}
We introduce the quantity
$$
      \alpha(a)= \sup\left\{\rho \in \R: \ \int_c^\infty s^{\rho - 2}\frac{1}{\hat{a}(s)}\dd s <\infty\right\}
$$
and we make the following assumption:
\begin{hypothesis}
     $\alpha(a)>1/2$.
\end{hypothesis}

\begin{rem}
It is known from the theory of deterministic Volterra equations that the singularity 
of $a$ helps smoothing the solution. We notice that $\alpha(a)$ is independent on the choice of $c>0$ and this quantity 
describes the behavior of the kernel near $0$; by this way we ensure that smoothing is sufficient to keep
the stochastic term tractable. 
\end{rem}

It is known that we can associate to any completely monotone kernel
$a$, by means of Bernstein's Theorem \cite[pag. 90]{pruss}, a measure
$\nu$ on $[0,+ \infty)$ such that
\begin{equation}
  \label{eq:Bernstein}
  a(t) = \int_{[0,+ \infty)} e^{-\kappa t}\, \nu({\rm d}\kappa).
\end{equation}
From the required singularity of $a$ at $0+$ we obtain that \(\nu([0,+
\infty))=a(0+)=+ \infty\) while for $s>0$ the Laplace transform $\hat
a$ of $a$ verifies
\begin{align*}
  \hat a(s) = \int_{[0,+ \infty)} \frac 1{s+\kappa} \, \nu({\rm
    d}\kappa) < + \infty.
\end{align*}

Under the assumption of complete monotonicity of the kernel, a
semigroup approach to a type of abstract integro-differential
equations encountered in linear viscoelasticity was introduced in
\cite{desch/miller/1988} and extended  to the case of Hilbert space valued equations in \cite{bonaccorsi/desch}. 
%In order to simplify the exposition we quote from \cite{bonaccorsi/desch} the main result concerning the derivation of the state equation  \eqref{eq:state-eq}.

We will see that  this approach allow to treat the case of semilinear, stochastic integral equations; we start for simplicity with the equation
\begin{equation}
  \label{eq:ev-f}
  \begin{aligned}
    \frac{d}{dt} \int_{-\infty}^t a(t-s)u(s) \, {\rm d}s &= Au(t) +
    f(t), \qquad& t \in [0,T]
    \\
    u(t) &= u_0(t), \qquad& t \le 0,
  \end{aligned}
\end{equation}
where $f$ belongs to $L^1(0,T;X)$.
Moreover, we introduce the following identity, which comes from
Bernstein's theorem
\begin{align*}
  \int_{-\infty}^t a(t-s)u(s)\, {\rm d}s = \int_{-\infty}^t
  \int_{[0,+ \infty)} e^{-\kappa(t-s)} \, \nu({\rm d}\kappa)\, u(s) {\rm
    d}s = \int_{[0,+ \infty)} \bx(t,\kappa)\, \nu({\rm d}\kappa);
\end{align*}
here $\bx(t,\kappa)$ is function given by
\begin{equation}
  \label{eq:intro_v_defined}
  \bx(t,\kappa) = \int_{-\infty}^t e^{-\kappa(t-s)}u(s)\, {\rm d}s.
\end{equation}
Formal differentiation yields
\begin{equation}
  \label{eq:intro_v_diff_equation}
  \frac{\partial}{\partial t} \bx(t,\kappa) = -\kappa \bx(t,\kappa) +
  u(t),
\end{equation}
while the integral equation (\ref{eq:ev-f}) can be rewritten
\begin{equation}
  \label{eq:intro_rewritten}
  \int_{[0,+ \infty)} (-\kappa \bx(t,\kappa)+u(t))\, \nu({\rm d}\kappa) =
  A u(t) + f(t).
\end{equation}
Now, the idea is to use equation \eqref{eq:intro_v_diff_equation} as
the state equation, with $B \bx= -\kappa \bx(\kappa) + u$, while
\eqref{eq:intro_rewritten} enters in the definition of the domain of
$B$.

In our setting, the function \(\bx(t,\cdot)\) will be considered the
state of the system, contained in the state space $X$
%. The state space $X$
that consists of all Borel measurable functions $
\by: [0,+ \infty) \to H$ such that the seminorm
\begin{align*}
  \Vert \by \Vert_X^2 := \int_{[0,+ \infty)} (\kappa + 1) \vert 
\by(\kappa)\vert_H^2 \, \nu({\rm d}\kappa)
\end{align*}
is finite. We shall identify the classes $\by$ with respect to
equality almost everywhere in $\nu$.

Let us consider the initial condition. We introduce the space 
$$
   \tilde X_0:=\left\{ u: \R_{-} \to H: \ exists \ M>0 \ and \ \omega>0 \ s.t. \ |u(t)|<Me^{\omega t}, \ t\leq 0 \right\}$$
and we endow it with a  positive inner product
\begin{align*}
  \langle u,v \rangle_{\tilde X} = \int \int [a(t+s) - a'(t+s)]
  \langle u(-s), v(-t) \rangle_H \, {\rm d}s \, {\rm d}t;
\end{align*}
then, setting $\tilde N_0 = \{ u \in \tilde X_0\ :\ \langle u,u
\rangle_{\tilde X} = 0\}$, $\langle \cdot,\cdot \rangle_{\tilde X}$ is
a scalar product on $\tilde X_0 / \tilde N_0$; we define $\tilde X$
the completition of this space with respect to $\langle \cdot,\cdot
\rangle_{\tilde X}$. We let the operator $Q: \tilde X \to X$ be given
by
\begin{equation}
  \label{eq:intro_initial}
  \bx(0,\kappa)= Q u_0(\kappa) = \int_{-\infty}^0 e^{\kappa s}u_0(s)\, {\rm d}s.
\end{equation}

It has been proved in \cite[Proposition 2.5]{bonaccorsi/desch} that the
operator $Q$ is an isometric isomorphism between $\tilde X$ and $X$.
This operator maps the initial value of the stochastic Volterra
equation in the initial value of the abstract state
equation. Different initial conditions of the Volterra equation
generate different initial conditions of the state equation.

Hypothesis \ref{hp:dato-in} is necessary in order
to have a greater regularity on the inial value of the state
equation. In fact in this case \cite[Proposition 2.20]{bonaccorsi/desch} shows that $Q u_0$ belongs to $X_{\eta}$ for
$\eta \in (0,\frac12)$.

\begin{rem}
  We stress that under our assumptions we are able to treat, for
  instance, initial conditions for the Volterra equation of the
  following form
  \begin{align*}
    u_0(t) =
    \begin{cases}
      0, & ( - \infty, -\delta); \\
      \bar{u} & [-\delta,0]
    \end{cases}
  \end{align*}
    provided $\bar{u}$ has a suitable regularity.
\end{rem}

\smallskip

We quote from \cite{bonaccorsi/desch} the main result concerning the
state space setting for stochastic Volterra equations in infinite
dimensions.

\begin{thm}[State space setting]
  \label{t:state space setting}
  Let \(A\), \(a\), \(\alpha(a)\), \(W\) be given above; choose
  numbers \(\eta \in (0,1)\), \(\theta \in (0,1)\) such that
  \begin{align}\label{eq:eta-theta}
    \eta > \frac 12\, (1-\alpha(a)), \quad \theta < \frac 12\,
    (1+\alpha(a)), \quad \theta-\eta>\frac 12.
  \end{align}
  Then there exist
  \hfill\begin{itemize}
  \item[1)] a separable Hilbert space \(X\) and an isometric
    isomorphism \(Q: \tilde X \to X\),
  \item[2)] a densely defined sectorial operator \(B:D(B) \subset X
    \to X\) generating an analytic semigroup \(e^{tB}\) with growth
    bound $\omega_0$,
  \item[3)] its real interpolation spaces \(X_{\rho} =
    (X,D(B))_{(\rho,2)}\) with their norms \(\Vert \cdot
    \Vert_{\rho}\),
  \item[4)] linear operators \(P: H \to X_{\theta}\), \(J:X_{\eta} \to
    H\)
  \end{itemize}
  such that the following holds:

   For each \(x_0 \in X\), the problem (\ref{eq:ev-f}) is
  equivalent to the evolution equation
    \begin{equation}
      \label{pb:evo-state-f}
      \begin{aligned}
        \bx'(t) &= B \bx(t) + (I-B)P f(t)
        \\
        \bx(0) &= x_0
      \end{aligned}
    \end{equation}
    in the sense that if \(u_0\in \tilde X_0\) and \(\bx(t;x_0)\) is the
    weak solution to Problem~\eqref{pb:evo-state-f} with \(x_0 = Qu_0\),
    then \(u(t;u_0) = J\bx(t;x_0)\) is the unique weak solution to
    Problem~\eqref{eq:ev-f}.
\end{thm}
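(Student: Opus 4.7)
The plan is to make rigorous the formal state-space reformulation sketched in \eqref{eq:intro_v_diff_equation}--\eqref{eq:intro_rewritten}. First I would define the state space $X$ as in the excerpt, with weighted norm $\|\by\|_X^2=\int(\kappa+1)|\by(\kappa)|_H^2\,\nu(\dd\kappa)$, and introduce the candidate generator $B$ through its graph: $\by\in D(B)$ iff there exists $u\in H$ (to be called $J\by$) such that $\kappa\mapsto -\kappa\by(\kappa)+u$ belongs to $X$ and satisfies the compatibility relation $\int_{[0,\infty)}(-\kappa\by(\kappa)+u)\,\nu(\dd\kappa)=Au$; in this case $B\by(\kappa)=-\kappa\by(\kappa)+J\by$. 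This packages the pointwise relation \eqref{eq:intro_v_diff_equation} and the integral constraint \eqref{eq:intro_rewritten} into a single closed operator on $X$, and makes $J$ well-defined on $D(B)$.

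Second, I would prove that $B$ is sectorial by computing its resolvent explicitly. For $\by_0\in X$ the ansatz $\by(\kappa)=(\lambda+\kappa)^{-1}(u+\by_0(\kappa))$ reduces $(\lambda-B)\by=\by_0$ to the $H$-valued equation
\[
 (\lambda\hat a(\lambda)-A)\,u=\int_{[0,\infty)}\kappa(\lambda+\kappa)^{-1}\by_0(\kappa)\,\nu(\dd\kappa).
\]
Since $A$ is sectorial and the scalar symbol $\lambda\hat a(\lambda)$ admits the necessary bounds in a sector (a consequence of complete monotonicity and of the representation $\hat a(\lambda)=\int(\lambda+\kappa)^{-1}\nu(\dd\kappa)$), one inverts the operator and recovers an estimate $\|(\lambda-B)^{-1}\|_X\le C/|\lambda|$ on a shifted sector, which yields both sectoriality and the growth bound $\omega_0$. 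The same formulas identify the real interpolation spaces $X_\rho=(X,D(B))_{\rho,2}$ in terms of integrability conditions on $\hat a$.

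Third, I would construct $J$ and $P$ together with the interpolation estimates. On $D(B)$ the map $J\by=u$ is already defined; extending it continuously to $X_\eta$ with $\eta>\tfrac{1}{2}(1-\alpha(a))$ amounts to checking that the integral representing $u$ converges, which is precisely what the inequality $\alpha(a)>1-2\eta$ gives. For $P$ one chooses a natural right inverse, for instance $Pu(\kappa)=(1+\kappa)^{-1}u$ up to a normalisation, and uses the integrability of $\hat a$ together with $\theta<\tfrac{1}{2}(1+\alpha(a))$ to place $Pu$ in $X_\theta$; the gap condition $\theta-\eta>\tfrac{1}{2}$ is what makes $(I-B)P$ an admissible forcing operator at the level of interpolation spaces. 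The main technical obstacle is precisely this bookkeeping of exponents, which requires the Laplace-transform characterisation of the norms $\|\cdot\|_\rho$ and the definition of $\alpha(a)$.

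Finally, I would establish equivalence of the two problems. Given a weak solution $\bx$ of \eqref{pb:evo-state-f} with $x_0=Qu_0$, set $u(t)=J\bx(t)$; pairing with $\zeta\in D(A^\star)$ and using the identity $\int_{-\infty}^t a(t-s)u(s)\,\dd s=\int\bx(t,\kappa)\,\nu(\dd\kappa)$ coming from Bernstein's theorem, the compatibility relation defining $D(B)$ produces exactly the weak form of \eqref{eq:ev-f}. The converse is symmetric: given a weak solution $u$, define $\bx(t,\kappa)$ by \eqref{eq:intro_v_defined} and verify \eqref{pb:evo-state-f} in the mild sense. The isometry $Q:\tilde X\to X$ reconciling initial data is then obtained by a density/completion argument based on the positivity of the bilinear form $\langle\cdot,\cdot\rangle_{\tilde X}$, which itself rests on the completely monotone representation of $a$.
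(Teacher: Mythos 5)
The paper does not actually prove this theorem: it is quoted verbatim from Bonaccorsi and Desch \cite{bonaccorsi/desch} (building on Miller \cite{miller/1974}, Desch--Miller \cite{desch/miller/1988} and Homan \cite{homan}), so there is no in-paper argument to compare against. Your sketch reconstructs essentially the construction of that reference: the same weighted state space $X$, the same definition of $B$ through the pointwise relation $-\kappa\by(\kappa)+u$ together with the integral compatibility constraint placed in $D(B)$, the same resolvent reduction to the $H$-valued equation $(\lambda\hat a(\lambda)-A)u=\int\kappa(\lambda+\kappa)^{-1}\by_0(\kappa)\,\nu(\dd\kappa)$ (this is exactly the formula the paper quotes in Proposition \ref{prop:expdec} from \cite[Lemma 3.5]{bonaccorsi/desch}), and the same role for $\alpha(a)$ in placing $J$ on $X_\eta$ and $P$ into $X_\theta$. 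So the route is the right one and is the one the cited literature follows.

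Two points in your outline are thinner than the actual construction and deserve to be flagged. First, the operator $P$: the guess $Pu(\kappa)=(1+\kappa)^{-1}u$ ``up to a normalisation'' is not merely off by a scalar; the correct $P$ carries an operator-valued factor built from the resolvent of $A$ at the point $\hat a(1)$ (visible in the quoted resolvent formula through the term $R(s\hat a(s))$), because the element $u$ entering the state dynamics is coupled to $A$ through the compatibility constraint. Without that factor the verification that $u=J\bx$ satisfies the weak form of \eqref{eq:ev-f} with the forcing $f$ does not close. Second, the forcing term genuinely lives in the extrapolation space: since $\nu([0,\infty))=+\infty$, functions constant in $\kappa$ are not elements of $X$, so the inhomogeneity cannot be inserted directly into the state equation and the factorisation $(I-B)Pf$ interpreted in $X_{-1}$ is essential rather than cosmetic; your final equivalence step (pairing with $\zeta\in D(A^\star)$ and invoking the Bernstein identity) implicitly uses this and should be run through the $B_{-1}$ extension, as the paper's Remark \ref{re:2.**} indicates. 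With those two repairs your argument matches the proof in \cite{bonaccorsi/desch}.
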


%\begin{lemma}\label{lem:res}

%\end{lemma}

It is remarkable that $B$ generates an analytic semigroup, since in
this case we have at our disposal a powerful theory of optimal
regularity results. In particular, besides the interpolation spaces
$X_\theta$ introduced in Theorem \ref{t:state space setting}, we may
construct the extrapolation space $X_{-1}$, i.e., a Sobolev space of
negative order associated to $e^{t B}$. 

Under some additional condition it is possible to prove exponential stability
 of the semigroup $e^{tB}$.
 As we will see, this is especially needed in the study of the optimal control problem with infinite horizon.
%Assume for simplicity that $B$ is of negative type (otherwise, one may
%onsider, for suitable $\omega_0$, $B - \omega_0$ instead of $B$ in the following discussion).
In particular we impose the following
\begin{hypothesis}\label{hp:expdec}
   There exists $\sigma >0$ such that the function $e^{\sigma t}a(t)$ is completely 
   monotonic.
\end{hypothesis}

\noindent Consequently, we obtain that $B$ is of negative type.
\begin{prop}\label{prop:expdec}
    The real parts of the spectrum of $B$ are bounded by some $\omega_0<0$. Consequently the semigroup $e^{tB}$ decays exponentially.  
\end{prop}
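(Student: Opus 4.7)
The plan is to translate the extra exponential bound on $a$ into a support property of its Bernstein measure $\nu$, and then read off a spectral bound for $B$ from it.

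First, I would apply Bernstein's theorem to the completely monotone function $t\mapsto e^{\sigma t}a(t)$ provided by Hypothesis \ref{hp:expdec}. This yields a positive measure $\mu$ on $[0,\infty)$ with
\begin{align*}
e^{\sigma t}a(t)=\int_{[0,\infty)}e^{-\kappa t}\,\mu(d\kappa),\qquad
a(t)=\int_{[0,\infty)}e^{-(\kappa+\sigma)t}\,\mu(d\kappa).
\end{align*}
By uniqueness of the Bernstein representation \eqref{eq:Bernstein}, the measure $\nu$ associated to $a$ must coincide with the push-forward of $\mu$ under the shift $\kappa\mapsto\kappa+\sigma$, hence $\mathrm{supp}(\nu)\subset[\sigma,+\infty)$. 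In particular, every element of the state space $X$ can be regarded as a function on $[\sigma,+\infty)$, and the ``diagonal'' pointwise multiplier $\kappa\mapsto-\kappa$ entering the definition of $B$ is bounded above by $-\sigma$.

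Next I would invoke the spectral description of $B$ established in \cite{bonaccorsi/desch}: a point $\lambda\in\mathbb{C}$ lies in $\rho(B)$ as soon as $-\lambda\notin\mathrm{supp}(\nu)$ and $\lambda\hat a(\lambda)\in\rho(A)$. The first requirement is satisfied for every $\lambda$ with $\mathrm{Re}(\lambda)>-\sigma$ by the support property above. For the second one, $\hat a(\lambda)=\int(\lambda+\kappa)^{-1}\,\nu(d\kappa)$ extends holomorphically to the half-plane $\mathrm{Re}(\lambda)>-\sigma$, and a direct estimate shows that the range of $\lambda\mapsto\lambda\hat a(\lambda)$ over $\mathrm{Re}(\lambda)\ge -\sigma+\varepsilon$ is contained in a sector lying in $\rho(A)$ for $\varepsilon>0$ small enough (combining sectoriality of $A$, Hypothesis \ref{hp:a,A,f,g,r,W}(2), with the fact that $\nu$ is concentrated on $[\sigma,\infty)$). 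Put together, this gives $\sigma(B)\subset\{\mathrm{Re}(\lambda)\le\omega_0\}$ for some $\omega_0\in(-\sigma,0)$.

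Since $B$ generates an analytic semigroup by Theorem~\ref{t:state space setting}, spectral bound and growth bound coincide, so $\|e^{tB}\|\le Me^{\omega_0 t}$ with $\omega_0<0$, which is the claimed exponential stability. The delicate point of the argument is the middle step: one must rule out that $\lambda\hat a(\lambda)$ crosses the spectrum of $A$ as $\mathrm{Re}(\lambda)$ approaches $-\sigma$ from the right. This requires a careful asymptotic analysis of $\hat a$ along vertical lines just to the right of $\{\mathrm{Re}(\lambda)=-\sigma\}$, exploiting that the integrand $(\lambda+\kappa)^{-1}$ stays uniformly bounded there because of the separation $\kappa-|\mathrm{Re}\lambda|\ge\varepsilon>0$ on $\mathrm{supp}(\nu)$.
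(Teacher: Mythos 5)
Your first step is fine: Hypothesis \ref{hp:expdec} does force $\mathrm{supp}(\nu)\subset[\sigma,+\infty)$, which is the same underlying fact the paper uses (there phrased as: $\hat a$ extends to $\mathbb{C}\setminus(-\infty,-\sigma]$). The gap is exactly at the point you yourself flag as ``delicate'', and it is not repairable by the estimate you suggest. It is false in general that $\lambda\mapsto\lambda\hat a(\lambda)$ maps the half-plane $\{\mathrm{Re}\,\lambda\ge-\sigma+\varepsilon\}$ into a sector contained in $\rho(A)$. Take $\lambda$ real in $(-\sigma,0)$: since $\mathrm{supp}(\nu)\subset[\sigma,\infty)$, $\hat a(\lambda)=\int(\lambda+\kappa)^{-1}\nu(\dd\kappa)>0$, so $\lambda\hat a(\lambda)$ sweeps an interval of \emph{negative real} numbers (of possibly large modulus as $\lambda\downarrow-\sigma$), and for a typical sectorial $A$ (say a negative self-adjoint operator with unbounded spectrum) these values do meet $\sigma(A)$. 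Worse, such points genuinely belong to $\sigma(B)$: if $Au=\lambda\hat a(\lambda)u$ with $u\neq0$, then $\bx(\kappa)=(\lambda+\kappa)^{-1}u$ lies in $X$ (here $\lambda+\kappa\ge\varepsilon$ on $\mathrm{supp}(\nu)$) and is an eigenvector of $B$ with eigenvalue $\lambda$. So the resolvent half-plane you are trying to establish is too large; the spectrum of $B$ may well intrude into the strip $-\sigma<\mathrm{Re}\,\lambda<0$. Your bound on $(\lambda+\kappa)^{-1}$ only controls the size of $\hat a(\lambda)$, not the position of $\lambda\hat a(\lambda)$ relative to $\sigma(A)$, so the ``careful asymptotic analysis'' you invoke cannot close this.

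The statement is nevertheless true, and the paper's route shows what is actually needed: one only proves that $0\in\rho(B)$, using the explicit resolvent formula
\begin{align*}
   [R(s,B)\bx](\kappa)=\frac{1}{s+\kappa}\Bigl[\bx(\kappa)+R(s\hat a(s),A)\int_{[0,\infty)}\frac{\kappa}{s+\kappa}\,\bx(\kappa)\,\nu(\dd\kappa)\Bigr],
\end{align*}
which makes sense in a neighborhood of $s=0$ precisely because Hypothesis \ref{hp:expdec} lets $\hat a$ extend past the imaginary axis and because $0\in\rho(A)$. Since $\sigma(B)$ is confined to a sector around the negative real axis (sectoriality of $B$ from Theorem \ref{t:state space setting}), excluding a neighborhood of the vertex $0$ already forces $\sup\mathrm{Re}\,\sigma(B)=\omega_0<0$; the conclusion then follows, as in your last step, from the spectrum-determined growth property of analytic semigroups. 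In short: your final step agrees with the paper, but your intermediate claim proves (and needs) a half-plane inclusion that fails, whereas the correct argument only needs invertibility at the single point $0$ combined with the sector confinement.
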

\begin{proof}
    We proceed as in Bonaccorsi and Desch \cite{bonaccorsi/desch}. All we need to show is that $0$ is in the resolvent set of $B$. Once this
is proved, the spectral bound is negative since the spectrum is confined to a
sector. From this exponential decay follows, since analytic semigroups have
spectrum determined growth.
Now we notice that $\hat{a}(s)$ exists in the set $\mathbb{C}\setminus (-\infty,-\sigma]$
and $0$ is in the resolvent set of $A$, by assumption. 
Moreover, we can give the explicit expression of 
$R(s,B)$, which is
\begin{align*}
   [R(s,B)\bx](\kappa)= \frac{1}{s+\kappa} \left[ \bx(\kappa)+ R(s\hat{a}(s)) \int_{[0,\infty)} \frac{\kappa}{s+\kappa}\bx(\kappa) \nu (\dd \kappa)\right]
\end{align*}
(the calculation is straightforward and can be found in \cite[Lemma 3.5]{bonaccorsi/desch}).
Then it is easily seen that $R(s,B)$ can be extended to a neighborhood of $0$. 
\end{proof}

The semigroup $e^{t B}$ extends to $X_{-1}$ and the generator of this
extension, that we denote $B_{-1}$, is the unique continuous extension
of $B$ to an isometry between $X$ and $X_{-1}$. See for instance
\cite[Definition 5.4]{Engel2000} for further details.

\begin{rem}
  \label{re:2.**}
  In the sequel, we shall always denote the operator with the letter
  $B$, even in case where formally $B_{-1}$ should be used
  instead. This should cause no confusion, due to the similarity of
  the operators.
\end{rem}

\begin{rem}
   \label{re:riscrittura}
   If we apply to problem \eqref{eq:Volterra} the machinery introduced above, we obtain, on the space
 $X$, the stochastic Cauchy problem
       \begin{align}\label{eq:state-eq-2}
   \begin{cases}
    \dd \bx(t)= B \bx(t)\dd t+(I-B)Pf(J\bx(t))\dd t \\
         \qquad \qquad 
           +g\,r(J \bx(t),\gamma(t))\dd t+ (I-B)P g\,\dd W(t)\\
     \bx(0)=x.
   \end{cases}
   \end{align}
    for $0\leq t\leq T$ and initial condition $x\in X_\eta$. 
The above expression is only formal since the coefficients do not belong to the state space; however, we can give a meaning to the mild form of the equation and we can also prove existence and uniqueness of the solution. Moreover,
  given the solution $\bx$ of \eqref{eq:state-eq-2}, we obtain the solution of 
  the original Volterra equation by setting $v(t)=J\bx(t)$. This facts will be the object of the next sections.
\end{rem}

\section{The state equation: existence and uniqueness}\label{sec:exun-abs}
       In this section, motivated by the construction in Section \ref{sec:anal-set}, we shall establish 
      existence and uniqueness result for the following stochastic uncontrolled
Cauchy problem on the space $X$ defined in Section \ref{sec:anal-set}:
      \begin{align}\label{eq:unc}
   \begin{cases}
    \dd \bx(t)= B \bx(t)\dd t+(I-B)Pf(J\bx(t))\dd t \\
         \qquad \qquad 
           +(I-B)P g \, r(J\bx(t),\gamma(t))\dd t + (I-B)P g\,\dd W(t)\\
     \bx(0)=x.
   \end{cases}
   \end{align}
    for $0 \leq  t \leq T$ and initial condition $x\in X_\eta$. The above expression is only formal since the coefficients do not belong to the state space; however, we can give a meaning to the mild form of the equation: 
\begin{defn} 
We say that a continuous, $X$-valued, predictable process $\bx=(\bx(t))_{t\geq 0}$ is a (mild) solution of the state equation \eqref{eq:state-eq-2} if $\P$-a.s.,
   \begin{multline*}
        \bx(t)=e^{(t-s)B}x+\int_s^t e^{(t-\sigma)B}(I-B)Pf(J\bx(\sigma))\dd \sigma\\
          \quad +\int_0^t e^{(t-\sigma)B}(I-B)P \, r(J\bx(\sigma),\gamma(\sigma))\dd \sigma+ \int_s^t e^{(t-\sigma)B}(I-B)P g\,\dd W(\sigma).
   \end{multline*}
\end{defn}
   Let us state the main existence result for the solution of equation \eqref{eq:state-eq-2}.
   
   \begin{thm}\label{thm:exuni}
        Under Hypotheses \ref{hp:a,A,f,g,r,W}, \ref{hp:dato-in}, for an arbitrary predictable process $\gamma$ with values in $\cU$, for every $0\leq  t\leq T$ and $x\in X_\eta$, there exists a unique adapted process $\bx \in L^p_\cF(\Omega,C([0,T];X_\eta))$ solution of \eqref{eq:state-eq-2}. 
Moreover,
%    the solution $\bx(t;s,x)$ depends continuously on the initial conditions $(s,x)\in [0,T]\times X_\eta$ and
 the estimate
   \begin{align}\label{eq:exuni}
       \bE \sup_{t\in[0,T]} ||\bx(t)||^p_\eta \leq C (1+||x||_\eta^p)
\end{align}
holds for some positive constant $C$ depending on $T$ and the parameters of the problem.
   \end{thm}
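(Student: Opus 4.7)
The plan is to obtain $\bx$ as the unique fixed point in $L^p_\cF(\Omega,C([0,T];X_\eta))$ of the map
\begin{align*}
   \Gamma(\bx)(t) = e^{tB}x &+ \int_0^t e^{(t-\sigma)B}(I-B)Pf(J\bx(\sigma))\,\dd\sigma \\
   &+\int_0^t e^{(t-\sigma)B}(I-B)Pg\,r(J\bx(\sigma),\gamma(\sigma))\,\dd\sigma + W^B(t),
\end{align*}
where $W^B(t)=\int_0^t e^{(t-\sigma)B}(I-B)Pg\,\dd W(\sigma)$. The linear term $e^{tB}x$ lies in $C([0,T];X_\eta)$ by analyticity of $e^{tB}$ and the assumption $x\in X_\eta$.

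The crux is that by Theorem \ref{t:state space setting} we have $P:H\to X_\theta$ with $\theta-\eta>\tfrac12$, so $(I-B)P:H\to X_{\theta-1}$, and $e^{sB}$ maps $X_{\theta-1}$ into $X_\eta$ with the analytic-semigroup bound $\|e^{sB}(I-B)\|_{L(X_\theta,X_\eta)}\le C\,s^{-(1+\eta-\theta)}$ where the exponent $1+\eta-\theta<\tfrac12$. First I would handle the stochastic convolution: since $g\in L_2(\Xi,H)$, we have
\begin{align*}
   \mathbb{E}\,\|W^B(t)\|_\eta^2 \le C\int_0^t (t-\sigma)^{-2(1+\eta-\theta)}\,\dd\sigma <\infty,
\end{align*}
and by the factorization method of Da Prato--Kwapie\'n--Zabczyk (applicable because $2(1+\eta-\theta)<1$) the process $W^B$ admits a modification in $L^p(\Omega;C([0,T];X_\eta))$ for every $p\ge 2$, with estimates depending only on $T$ and the parameters. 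Similarly, the two deterministic convolutions are estimated using $\|e^{sB}(I-B)P\|_{L(H,X_\eta)}\le C s^{\theta-\eta-1}$, the boundedness of $J:X_\eta\to H$, and the Lipschitz/bounded properties of $f$ and $r$ from Hypothesis \ref{hp:a,A,f,g,r,W}; this yields
\begin{align*}
   \sup_{t\in[0,T]} \Bigl\|\int_0^t e^{(t-\sigma)B}(I-B)P\bigl[f(J\bx(\sigma))+g\,r(J\bx(\sigma),\gamma(\sigma))\bigr]\dd\sigma\Bigr\|_\eta \le C_T\bigl(1+\sup_{\sigma\le T}\|\bx(\sigma)\|_\eta\bigr).
\end{align*}
Hence $\Gamma$ maps $L^p_\cF(\Omega,C([0,T];X_\eta))$ into itself.

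For contractivity, for two processes $\bx,\by$ I estimate, using Lipschitz continuity of $f$, of $r(\cdot,\gamma)$ uniformly in $\gamma$, and boundedness of $J:X_\eta\to H$,
\begin{align*}
   \|\Gamma(\bx)(t)-\Gamma(\by)(t)\|_\eta \le C\int_0^t (t-\sigma)^{\theta-\eta-1}\|\bx(\sigma)-\by(\sigma)\|_\eta\,\dd\sigma,
\end{align*}
and a standard convolution/Gronwall argument (or equivalently an equivalent norm $\sup_t e^{-\lambda t}\|\cdot\|_\eta$ with $\lambda$ large) yields contraction on $[0,T_0]$ for small $T_0$, which is then iterated up to $T$ to obtain existence and uniqueness in $L^p_\cF(\Omega;C([0,T];X_\eta))$. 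The bound \eqref{eq:exuni} then follows directly from the a priori estimates: applying the previous inequalities to $\bx=\Gamma(\bx)$ and using Gronwall's lemma in the resulting singular integral inequality of Volterra type with kernel $(t-\sigma)^{\theta-\eta-1}$.

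The main obstacle is controlling the stochastic convolution in the interpolation space $X_\eta$ despite the presence of the unbounded operator $(I-B)P$, which is exactly where the precise choice of exponents $\eta,\theta$ in \eqref{eq:eta-theta} (in particular $\theta-\eta>\tfrac12$) is essential: it is what makes the temporal singularity $(t-\sigma)^{-2(1+\eta-\theta)}$ integrable and thereby legitimates both the factorization argument for pathwise continuity and the Volterra-type Gronwall estimate used for contraction.
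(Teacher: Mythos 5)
Your argument is correct and follows essentially the same route as the paper: a Banach fixed-point argument in $L^p(\Omega;C([0,T];X_\eta))$, with the stochastic convolution handled by the factorization method (the paper invokes \cite[Proposition A.1.1]{DPZ92} with an auxiliary exponent $\delta$), the deterministic convolutions controlled by the analytic-semigroup bound $\|e^{sB}(I-B)P\|_{L(H,X_\eta)}\le Cs^{\theta-\eta-1}$ made integrable by $\theta-\eta>\tfrac12$, and contraction obtained via the equivalent weighted norm $\bE\sup_t e^{-\beta p t}\|\cdot\|_\eta^p$ with $\beta$ large. Your small-interval-plus-iteration alternative and the concluding Gronwall step for \eqref{eq:exuni} are standard variants of exactly the same estimates.
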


\begin{proof}
       The proof of the above theorem proceeds, basically, on the same lines as the proof of
       Theorem 
        $3.2$ in Bonaccorsi and Mastrogiacomo \cite{BoMa-JEE} (2009). 
      First, we define a mapping $\cK$ from $L^p(\Omega;C([0,T];X_\eta))$ to itself by the formula
      \begin{align}\label{eq:kappa}
           \cK(\bx)(t):=e^{tB}x+ \Lambda(\bx)(t)+\Delta(\bx)(t)+\Gamma(t),
      \end{align}
      where the second, third and last term in the right side of \eqref{eq:kappa} are given by
     \begin{align}
      &\Lambda(\bx)(t)=\int_0^t e^{(t-\tau)B}(I-B)Pf(J\bx(\tau))\dd \tau \label{eq:lambda} \\
      &\Delta(\bx)(t)=\int_0^t e^{(t-\tau)B}(I-B)P g \, r(J \bx(\tau),\gamma(\tau)) \dd \tau\\
      &\Gamma(t)= \int_0^t e^{(t-\tau)B}(I-B)P g\,\dd W(\tau) \label{eq:gamma}
      \end{align}
      Then, we will prove that the mapping $\cK$ is a contraction 
      on $L^p(\Omega;C([0,T];X_\eta))$ with respect to the equivalent norm
      $$
             \tn \bx\tn^p_\eta := \bE \sup_{t\in [0,T]} e^{-\beta p t}||\bx(t)||^p_\eta,
       $$
      where $\beta>0$ will be chosen later. For simplicity we write $\Lambda(t)$ instead of $\Lambda(\bx)(t)$.  

     Our first step is to prove that $\Gamma, \Delta$ and $\Lambda$ are well-defined
     mappings on the space $L^p(\Omega;C([0,T];X_\eta))$ and to give estimates on their norm.
     We choose $\delta$ small enough such that $ 1+\eta-\theta +1/p<\delta<\!
     <1/2$ and define
      \begin{align*}
            y_\eta(\tau) := \int_0^t (t-\sigma)^{-\delta}e^{(t-\sigma)B}(I-B)^{\theta}P\,g \dd W(\sigma). 
      \end{align*}
      Since  the semigroup $e^{tB}$ is analytic, $P$ maps $H$ into $X_{\theta}$ and $g\in L_2(\Xi,H)$, an application of Lemma 7.2 in \cite{LaL}, yields:
     $\bE\int_0^T |y_\eta(\sigma)|^p <\infty$. In particular 
     $y\in L^p([0,T];X)$, $\PP$-a.s. Moreover, if we set
       $$
     ( R_\delta\phi)(t) =\int_0^t (t-\sigma)^{\delta-1} e^{(t-\sigma)B} (I-B)^{1+\eta-\theta}\phi(\sigma) \dd \sigma,
     $$
       then in \cite[Proposition A.1.1.]{DPZ92} it is proved that $R_\delta$ is a bounded linear operator 
     from $L^p([0,T];X)$ into $C([0,T];X)$. Finally, by stochastic Fubini-Tonelli 
     Theorem we can rewrite:
     \begin{align*}
          (R_\delta y_\eta)(t)&= \int_0^t \int_0^\tau (t-\tau)^{\delta -1}(\tau-\sigma)^{\delta}\\
        &\qquad \qquad \qquad (I-B)^{\eta+1} e^{(t-\sigma)B}P\, g \dd W(\sigma) \ \dd \tau \\
       &=\left(\int_0^1 (1-\tau)^{\delta-1}\tau^{-\delta}\dd \tau\right)
       (I-B)^{\eta} \Gamma(t)
      \end{align*}
     and conclude that $\Gamma(t)\in L^p(\Omega, C([0,T];X_\eta))$. 

     In a similar (and easier) way it is possible to show that $\Lambda(\cdot,t)$ and $\Delta(\cdot,t)$ belong
     to $L^p(\Omega, C([0,T];X_\eta))$. 
     hence, we conclude that $\cK$ maps $L^p(\Omega;C([0,T];X_\eta))$
     into itself.
     %; to this end it is sufficient to recall that the initial condition $x$ belongs 
     %to $X_\eta$; but this follows immediately from the analyticity of the
     %semigroup, provided that $e^{tB}$ is extended to a constant for $t<s$:
    %$$   
     %    e^{(t-s)B}x=x \quad \textrm{for } t<s.
   %$$

    Now we claim that $\cK$ is a contraction in $L^p(\Omega, C([0,T];X_\eta))$. In fact,
    by straightforward estimates we can write
    \begin{align*}
        \tn \Lambda (\bx)(t) -\Lambda(\by)(t)\tn^p_\eta \leq C_{L,T} \beta^{1/2+\delta-(\theta-\eta)} 
     \tn \bx -\by \tn_\eta^p.
    \end{align*}
     Therefore $\cK$ is Lipschitz continuous from $L^p(\Omega,C([0,T];X_\eta))$ into itself;
further, we can find $\beta$ large enough such that $C_{L,T}(2\beta)^{1/2+\delta+\eta-\theta}<1$.
 Hence $\cK$ becomes a contraction on the time interval $[0,T]$ and by a classical 
     fixed point argument we get that there exists  a unique solution of the equation
     \eqref{eq:state-eq-2} on $[0,T]$.
 %  {\bf controllare esponente stima contrazione}
\end{proof}

   \begin{rem}
       In the following it will be also useful to consider the uncontrolled version of 
     equation \eqref{eq:state-eq-2}, namely:
     \begin{align}\label{eq:un-state-eq}
    \begin{cases}
    \dd \bx(t)= B \bx(t)\dd t+(I-B)Pf(J\bx(t))\dd t+ (I-B)P g\,\dd W(t)\\
     \bx(0)=x.
   \end{cases}
    \end{align}
    We will refer to \eqref{eq:un-state-eq} as the forward equation.
      We then notice that existence and uniqueness for the above equation can be treated in an identical 
     way as in the proof of Theorem \ref{thm:exuni}. 
   \end{rem}

\section{The controlled stochastic Volterra equation}\label{sec:exun-orig}
In this section we prove existence and uniqueness of solution for the original Volterra equation
\eqref{eq:Volterra}. As a preliminary step for the sequel, 
%we recall the definition of weak solution for stochastic
%Volterra equations and
 we state two results of existence and uniqueness for (a special case of) the original Volterra
equation. The proofs can be found in \cite[Section 2]{BCM11}.

%\begin{defn}
%    According to \cite[Section 6.1]{}, a process $u=\left\{ u(t), t\in [0,T]\right\}$ is a weak solution 
%   to \eqref{eq:Volterra} if $u$ is an $H$-valued predictable process with
%\begin{align*}
%    \bE \int_0^T |u(s)|^2 \dd s < \infty
%\end{align*}
%and the identity
%\begin{multline*}
%     \int_{-\infty}^t a(t-s) \langle u(s),\zeta\rangle_H \dd s=  \langle \bar{u},\zeta\rangle_H +\int_0^t \langle u(s),A^\star \zeta \rangle \dd s\\
%    +  \int_{0}^t  \langle f(u(s)),\zeta\rangle_H \dd s  +  \langle g W(t),\zeta\rangle_H 
%\end{multline*}
%holds $\PP$-a.s. for arbitrary $t\in [0,T]$ and $\zeta \in D(A^\star)$, where $A^\star$ is the adjoint of the operator
%$A$ and 
%\begin{align*}
%     \bar{u}=\int_{-\infty}^0 a(-s)u_0(s) \dd s
%\end{align*}
%\end{defn}

The first result concerns with the solution to the linear deterministic Volterra equation.
\begin{prop} 
  The linear equation
  \begin{equation}
    \label{eq:ev-lin}
    \begin{aligned}
      \frac{d}{dt} \int_{-\infty}^t a(t-s)u(s) \, {\rm d}s &= A
      u(t), \qquad t \in [0,T]
      \\
      u(t) &= 0, \qquad t \le 0.
    \end{aligned}
  \end{equation}
  has a unique weak solution $u \equiv 0$.
\end{prop}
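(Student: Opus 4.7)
My plan is to exploit the state-space machinery developed in Section~\ref{sec:anal-set} rather than argue directly on the Volterra equation. Existence of the trivial solution $u\equiv 0$ is immediate: plugging it into the weak formulation gives $0=0$ for every test function $\zeta\in D(A^\star)$, since $\bar u = \int_{-\infty}^0 a(-s)\cdot 0\,\dd s = 0$. So the content of the proposition is uniqueness.

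For uniqueness, I would take an arbitrary weak solution $u$ of \eqref{eq:ev-lin} and associate to it the $X$-valued function
\begin{align*}
  \bx(t,\kappa)=\int_{-\infty}^t e^{-\kappa(t-s)}u(s)\,\dd s,
\end{align*}
as in \eqref{eq:intro_v_defined}. Because $u\equiv 0$ on $(-\infty,0]$, the initial value collapses to $\bx(0,\kappa)=Qu_0(\kappa)=0$ in the sense of \eqref{eq:intro_initial}. A formal differentiation, which can be justified exactly as in \cite{bonaccorsi/desch} under the weak solution property (essentially Fubini plus an application of the identity $(a*u)(t)=\int \bx(t,\kappa)\,\nu(\dd\kappa)$ derived from Bernstein's representation), yields $\partial_t \bx(t,\kappa)=-\kappa\bx(t,\kappa)+u(t)$, together with the constraint $\int_{[0,\infty)}(-\kappa\bx(t,\kappa)+u(t))\nu(\dd\kappa)=Au(t)$. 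By Theorem~\ref{t:state space setting}, this is exactly the statement that $\bx$ is a weak solution of the linear homogeneous Cauchy problem
\begin{align*}
  \bx'(t)=B\bx(t),\qquad \bx(0)=0,
\end{align*}
on the Hilbert space $X$.

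Since $B$ generates an analytic (in particular strongly continuous) semigroup on $X$, the unique weak solution of this Cauchy problem is $\bx(t)=e^{tB}\cdot 0\equiv 0$. The equivalence stated in Theorem~\ref{t:state space setting} then recovers $u(t)=J\bx(t)=0$ for every $t\in[0,T]$, which is the desired uniqueness.

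The only genuinely delicate point in this plan is verifying that the map from weak Volterra solutions to weak solutions of the abstract Cauchy problem is well defined, i.e.\ that the formal differentiation producing $\partial_t\bx = -\kappa\bx + u$ is legitimate in the $X$-valued weak sense for a merely weakly defined $u$. This amounts to reproducing the ``reverse'' direction of the equivalence in Theorem~\ref{t:state space setting}; it is carried out in \cite[Section 2]{BCM11}, so I would simply quote that construction and reduce everything to the linear autonomous Cauchy problem above, whose uniqueness is standard.
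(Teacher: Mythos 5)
Your overall strategy --- pass to the state variable $\bx(t,\kappa)=\int_{-\infty}^t e^{-\kappa(t-s)}u(s)\,\dd s$ as in \eqref{eq:intro_v_defined}, show it is a weak solution of the homogeneous abstract Cauchy problem $\bx'=B\bx$, $\bx(0)=0$, and conclude from uniqueness of weak solutions for the semigroup generated by $B$ --- is exactly the state-space reduction the paper relies on; in fact the paper gives no argument of its own for this proposition but simply refers to \cite[Section 2]{BCM11}, where that construction is carried out. Two caveats. First, the step you yourself flag as delicate (that a \emph{merely weak} solution of \eqref{eq:ev-lin} yields a weak solution of the abstract problem) is the entire content of the statement: Theorem \ref{t:state space setting} as quoted in the paper only goes from the abstract equation to the Volterra equation, so it cannot be invoked for this reverse direction without circularity, and deferring it to \cite{BCM11} means your argument ultimately proves nothing beyond the citation the paper already makes --- acceptable here, but it should be presented as such rather than as an appeal to the ``equivalence'' of Theorem \ref{t:state space setting}. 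Second, your final recovery step ``$u(t)=J\bx(t)=0$'' again uses the equivalence in the unstated direction and needlessly involves the unbounded operator $J$ and membership of $\bx(t)$ in $X_\eta$; it is cleaner and elementary to argue directly from the definition of $\bx$: once $\bx\equiv 0$ in $X$, one can fix (using continuity of $t\mapsto\bx(t,\kappa)$ and a countable dense set of times) a single $\kappa_0$ with $\int_0^t e^{\kappa_0 s}u(s)\,\dd s=0$ for all $t\in[0,T]$, and differentiating in $t$ (the integrand is continuous) gives $u\equiv 0$. With these two adjustments your sketch is a faithful rendering of the argument the paper outsources.
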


%\begin{proof}
%  Let us take the Laplace transform in both sides of the linear
%  equation (\ref{eq:ev-lin}); we obtain
%  \begin{align*}
%    \lambda \hat a(\lambda) \hat u(\lambda) = A \hat u(\lambda),
%    \qquad \Re\lambda \ge 0,\ \lambda \not= 0;
%  \end{align*}
%  therefore, $\hat u(\lambda) = R(\lambda \hat a(\lambda), A) 0$.
%
%  Let $\lambda = x + i y$; recall that, by Bernstein's theorem, $\nu$
%  is the unique measure associated with the kernel $a$. Using
%  \cite[Lemma 1.1.7]{homan} we have
%  \begin{align*}
%    \lambda \hat a(\lambda) = \int_{[0,\infty)} \frac{(x + i y) \kappa
%      + (x^2 + y^2)}{(\kappa + x)^2 + y^2} \, \nu({\rm d}\kappa)
%  \end{align*}
%  hence $\Re(\lambda \hat a(\lambda)) \ge 0$ for all $\Re\lambda \ge
%  0$, $\lambda \not= 0$, which means that $\lambda \hat a(\lambda) \in
%  \rho(A)$ and
%  \begin{align*}
%    \hat u(\lambda) = 0, \qquad \Re\lambda \ge 0,\ \lambda \not= 0.
%  \end{align*}
%  The {\em complex inversion formula} for the Laplace transform
%  therefore leads to
%  \begin{align*}
%    u(t) = 0, \qquad \text{for a.a. $t \ge 0$}
%  \end{align*}
%  as claimed.

%\end{proof}

\smallskip

The second result deals with existence and uniqueness of the Stochastic Volterra
equation with non-homogeneous terms. The result comes from a generalization of \cite[Theorem 3.7]{bonaccorsi/desch}, where the case $f(t) \equiv 0$
is treated.  

\smallskip

\begin{prop}
  \label{te:4.2}
  In our assumptions, let $x_0 \in X_\eta$ for some
  $\frac{1-\alpha(a)}{2} < \eta < \frac{1}{2} \alpha(a)$.  Given the
  process
  \begin{align}
    \label{eq:v-lineare}
    \bx(t) = e^{t B}x_0 + \int_0^t e^{(t-s)B} (I-B) P f(s) \, {\rm d}s +
    \int_0^t e^{(t-s)B} (I-B) P g  \, {\rm d}{W(s)}
  \end{align}
  we define the process
  \begin{equation}
    \label{eq:definition_of_u}
    u(t) = \begin{cases}
      J \bx(t), & t \ge 0, \\
      u_0(t), & t \le 0.
    \end{cases}
  \end{equation}
  Then $u(t)$ is a weak solution to problem
  \begin{equation}
    \label{eq:u-lineare}
    \begin{aligned}
      \frac{d}{dt} \int_{-\infty}^t a(t-s)u(s) \, {\rm d}s &= A u(t) +
      f(t)+ g\dot W(t), \qquad t \in [0,T]
      \\
      u(t) &= u_0(t), \qquad t \le 0.
    \end{aligned}
  \end{equation}
\end{prop}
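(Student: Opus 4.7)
The plan is to extend \cite[Theorem 3.7]{bonaccorsi/desch}, which treats the case $f\equiv 0$, by including the deterministic forcing term: this contributes a classical convolution in the weak identity, handled by essentially the same duality argument as the stochastic convolution. The first step is to check that $\bx$ given by \eqref{eq:v-lineare} belongs to $L^p(\Omega;C([0,T];X_\eta))$, so that $u = J\bx$ is a well-defined continuous $H$-valued adapted process. The stochastic convolution is treated exactly as in the proof of Theorem~\ref{thm:exuni}, via factorization with a parameter $\delta\in(1+\eta-\theta+1/p,1/2)$ and the mapping property $P:H\to X_\theta$. For the deterministic convolution, the analyticity of $e^{tB}$ together with $P:H\to X_\theta$ yields the pointwise estimate $\|e^{(t-s)B}(I-B)Pf(s)\|_{X_\eta}\leq C(t-s)^{\theta-\eta-1}|f(s)|_H$, integrable on $(0,t)$ since $\theta>\eta$; moreover $e^{\cdot B}x_0$ is continuous in $X_\eta$ because $x_0\in X_\eta$. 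Thus $\bx\in C([0,T];X_\eta)$ $\P$-a.s., and the boundedness of $J:X_\eta\to H$ implies that $u$ is continuous, adapted and $p$-mean integrable.

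For the weak identity, fix $\zeta\in D(A^\star)$ and $t\in[0,T]$. Bernstein's formula \eqref{eq:Bernstein}, the representation \eqref{eq:intro_v_defined}, the relation $\bx(0,\kappa)=Qu_0(\kappa)$ from \eqref{eq:intro_initial}, and Fubini yield
$$
\int_{-\infty}^t a(t-s)\langle u(s),\zeta\rangle_H\,\dd s = \int_{[0,\infty)}\langle \bx(t,\kappa),\zeta\rangle_H\,\nu(\dd\kappa) =: \Phi_\zeta(\bx(t)),
$$
and in particular $\Phi_\zeta(\bx(0))=\langle \bar u,\zeta\rangle_H$. The key input from \cite{bonaccorsi/desch} is that $\Phi_\zeta\in D(B^\star)$ and that the state-space construction of Theorem~\ref{t:state space setting} translates into the duality identities
$$
\Phi_\zeta(B\by)=\langle J\by,A^\star\zeta\rangle_H\ \text{on } D(B),\qquad \Phi_\zeta((I-B)Ph)=\langle h,\zeta\rangle_H\ \text{for }h\in H.
$$
Applying $\Phi_\zeta$ to the mild formula \eqref{eq:v-lineare}, after Yosida approximation of $B$ and a stochastic Fubini argument to commute $\Phi_\zeta$ with the stochastic integral, and passing to the limit, one obtains
\begin{align*}
\Phi_\zeta(\bx(t)) &= \langle \bar u,\zeta\rangle_H + \int_0^t\langle u(s),A^\star\zeta\rangle_H\,\dd s\\
&\quad + \int_0^t\langle f(s),\zeta\rangle_H\,\dd s + \langle gW(t),\zeta\rangle_H,
\end{align*}
which is precisely the weak-solution identity in the definition.

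The main obstacle is the rigorous meaning of $\Phi_\zeta$ applied to terms involving the unbounded operator $(I-B)$: the test function $\zeta$ lives on the $H$-side via $A^\star$, while $(I-B)P\bullet$ maps into the extrapolation space $X_{-1}$. This is the technical core of \cite[Theorem 3.7]{bonaccorsi/desch}, handled there by Yosida-approximating $B$ and passing to the limit, using $x_0\in X_\eta$ to secure the required integrability. In the present statement the genuinely new term is the deterministic convolution $\int_0^t e^{(t-s)B}(I-B)Pf(s)\,\dd s$, and it contributes only $\int_0^t\langle f(s),\zeta\rangle_H\,\dd s$ through exactly the same duality identity $\Phi_\zeta\circ(I-B)P=\langle\cdot,\zeta\rangle_H$ that handles the stochastic convolution.
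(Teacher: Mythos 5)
Your proposal follows exactly the route the paper itself indicates: the paper gives no proof of Proposition \ref{te:4.2}, stating only that it is a generalization of \cite[Theorem 3.7]{bonaccorsi/desch} (which treats $f \equiv 0$) with the proof deferred to \cite[Section 2]{BCM11}, and your argument is precisely that generalization (regularity of $\bx$ via factorization and the smoothing of $e^{tB}$, then the duality/state-space identities to recover the weak identity, with the extra deterministic convolution handled by the same mechanism as the stochastic one). Your sketch is in fact more detailed than the paper's citation-only treatment, and the one step you defer (the rigorous action of $\Phi_\zeta$ on the $(I-B)P$ terms via Yosida approximation) is exactly the step the cited works supply, so there is no substantive gap.
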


After the preparatory results stated above, here we
prove that main result of existence and uniqueness of solutions of the
original controlled Volterra equation
\eqref{eq:Volterra}. 

\begin{thm} 
  \label{sol-Vol-contr}
 Assume Hypotheses \ref{hp:a,A,f,g,r,W} and \ref{hp:dato-in}. Let $\gamma$ be an admissible control and $\bx$ be the solution to problem
  (\ref{eq:state-eq}) (associated with $\gamma$) whose existence is proved in Theorem
  \ref{thm:exuni}.
%   \begin{equation*}
%     % \label{eq:state+control}
%     \begin{aligned}
%       {\rm d}v(t) &= \big[ B v(t) + (I-B)P g(t, J(v(t))) \,
%     r(t,J(v(t)),\gamma(t)) \big] \, {\rm d}t + (I-B)P g(t, J(v(t))) \,
%     {\rm d}W_t
%     \\
%     v(s) &= v_0.
%   \end{aligned}
% \end{equation*}
  Then the process
  \begin{equation}
    \label{eq:u}
    u(t) =
    \begin{cases}
      u_0(t), & t \le 0
      \\
      J \bx(t), & t \in [0,T]
    \end{cases}
  \end{equation}
  is the unique solution of the stochastic Volterra equation
  \begin{equation}
    \label{eq:ev-sec4}
    \begin{aligned}
      \frac{d}{dt} \int_{-\infty}^t a(t-s)u(s) \, {\rm d}s &= A u(t) +
      f(u(t)) + g\, \left[r(u(t),\gamma(t)) + \dot W(t)\right],
      \qquad t \in [0,T]
      \\
      u(t) &= u_0(t), \qquad t \le 0.
    \end{aligned}
  \end{equation}
\end{thm}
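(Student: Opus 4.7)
\emph{Existence.} Let $\bx$ denote the unique mild solution of the state equation \eqref{eq:state-eq-2} provided by Theorem \ref{thm:exuni}, and set $u(t):=J\bx(t)$ for $t\in[0,T]$, $u(t):=u_0(t)$ for $t\le 0$. The first step is to observe that the adapted $H$-valued process
$$
\tilde f(s) := f(J\bx(s)) + g\,r(J\bx(s),\gamma(s))
$$
is $p$-mean integrable on $[0,T]$: this follows from the estimate \eqref{eq:exuni}, the continuity of $J:X_\eta\to H$, the linear growth of $f$, and the uniform boundedness of $r$ contained in Hypothesis \ref{hp:a,A,f,g,r,W}. Grouping the two drift terms in the mild formulation of \eqref{eq:state-eq-2}, we may rewrite
$$
\bx(t)=e^{tB}x_0+\int_0^t e^{(t-s)B}(I-B)P\tilde f(s)\,\dd s+\int_0^t e^{(t-s)B}(I-B)Pg\,\dd W(s),
$$
which is exactly the structure to which Proposition \ref{te:4.2} applies (with forcing $\tilde f$). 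The proposition then yields that $u$ is a weak solution of the Volterra equation with forcing $\tilde f(t)+g\dot W(t)$, and replacing $\tilde f$ by its definition gives \eqref{eq:ev-sec4}.

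\emph{Uniqueness.} Let $u^{(1)}, u^{(2)}$ be two weak solutions of \eqref{eq:ev-sec4} with common past datum $u_0$. Set $F_i(s):=f(u^{(i)}(s))+g\,r(u^{(i)}(s),\gamma(s))$ and introduce
$$
\bx^{(i)}(t):=e^{tB}x_0+\int_0^t e^{(t-s)B}(I-B)PF_i(s)\,\dd s+\int_0^t e^{(t-s)B}(I-B)Pg\,\dd W(s).
$$
On one hand, Proposition \ref{te:4.2} guarantees that $J\bx^{(i)}$ is a weak solution of the linear inhomogeneous Volterra equation \eqref{eq:u-lineare} with forcing $F_i$; on the other hand, by definition of weak solution for \eqref{eq:ev-sec4}, the process $u^{(i)}$ is itself a weak solution of the same linear equation. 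The difference $u^{(i)}-J\bx^{(i)}$ therefore satisfies the homogeneous linear equation \eqref{eq:ev-lin} pathwise, and the linear uniqueness result stated above forces $u^{(i)}=J\bx^{(i)}$ on $[0,T]$, $\PP$-a.s. Substituting this identity into the definition of $\bx^{(i)}$ shows that $\bx^{(i)}$ is itself a mild solution of the state equation \eqref{eq:state-eq-2}; by the uniqueness part of Theorem \ref{thm:exuni} we deduce $\bx^{(1)}=\bx^{(2)}=\bx$, and consequently $u^{(1)}=u^{(2)}=J\bx$.

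\emph{Main obstacle.} The delicate point is the passage from the deterministic uniqueness for \eqref{eq:ev-lin} to the inhomogeneous stochastic equation \eqref{eq:u-lineare}: it must be verified that the two candidate weak solutions $u^{(i)}$ and $J\bx^{(i)}$ use precisely the \emph{same} random forcing so that the $F_i$-terms and the cylindrical Wiener integral $\langle gW(t),\zeta\rangle$ cancel in the difference, leaving a genuinely deterministic linear Volterra equation with zero initial history to which the first Proposition of this section applies pathwise. All the necessary integrability conditions are furnished by \eqref{eq:exuni} together with Hypothesis \ref{hp:a,A,f,g,r,W}, so the argument goes through without additional estimates.
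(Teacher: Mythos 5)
Your proof is correct, and while the existence half coincides with the paper's second step (apply Proposition \ref{te:4.2} to the forcing $\tilde f(s)=f(J\bx(s))+g\,r(J\bx(s),\gamma(s))$ and regroup), your uniqueness argument takes a genuinely different route. The paper proves uniqueness by a fixed-point argument at the Volterra level: it shows that the map $\cQ:\tilde u\mapsto u$, sending a process $\tilde u$ to the solution of the affine Volterra equation with frozen nonlinearities $f(\tilde u)+g\,r(\tilde u,\gamma)$, is a contraction on $L^p(\Omega;C([0,T];H))$ for a suitable weighted norm, the contraction estimate being obtained through the representation $u_i=Jv_i$ and the bounds of Theorem \ref{thm:exuni}; the unique fixed point is then identified with $J\bx$ in the second step. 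You instead transfer uniqueness from the state equation: given any weak solution $u^{(i)}$, you build the mild process $\bx^{(i)}$ with forcing $F_i$ determined by $u^{(i)}$, use Proposition \ref{te:4.2} together with the pathwise deterministic uniqueness for \eqref{eq:ev-lin} to force $u^{(i)}=J\bx^{(i)}$, observe that $\bx^{(i)}$ is then a mild solution of \eqref{eq:state-eq-2}, and conclude by the uniqueness part of Theorem \ref{thm:exuni}. Your route is shorter and avoids redoing a contraction estimate, at the price of needing $F_i\in L^p(\Omega;C([0,T];H))$ so that $\bx^{(i)}$ lies in the uniqueness class $L^p_\cF(\Omega;C([0,T];X_\eta))$ of Theorem \ref{thm:exuni}; this is the same integrability the paper's contraction argument already demands of its weak solutions (and the same pathwise use of the linear uniqueness proposition appears, implicitly, in the paper's first step), so no additional hypotheses are consumed. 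The paper's approach, conversely, yields uniqueness within $L^p(\Omega;C([0,T];H))$ directly at the Volterra level without leaning on the uniqueness statement for the reformulated equation.
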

  
\begin{proof}
  We propose to fulfill the following steps: first, we prove that the affine equation
    \begin{equation}
      \label{eq:ev-lin-2}
      \begin{aligned}
        \frac{d}{dt} \int_{-\infty}^t a(t-s)u(s) \, {\rm d}s &= A u(t)
        + f(\tilde{u} (t)) + g \,\left[r(\gamma (t), \tilde u(t)) + \dot
          W(t) \right], \qquad t \in [0,T]
        \\
        u(t) &= u_0(t), \qquad t \le 0.
      \end{aligned}
    \end{equation}
    defines a contraction mapping $\cQ: \tilde u \mapsto u$ on the
    space $L^2_\cF(\Omega;C([0,T];H))$. Therefore, equation
    (\ref{eq:ev-lin-2}) admits a unique solution.
  
   Then we show that the process $u$ defined in (\ref{eq:u})
    satisfies equation (\ref{eq:ev-lin-2}). Accordingly, by the
    uniqueness of the solution, the thesis of the theorem follows.

  \textit{First step.}
We proceed to define the mapping
\begin{align*}
  \cQ: L^p(\Omega;C([0,T];H)) \to L^p(\Omega;C([0,T];H))
\end{align*}
where $\cQ(\tilde u) = u$ is the solution of the problem
\begin{equation}
  \label{eq:ev-cL}
  \begin{aligned}
    \frac{d}{dt} \int_{-\infty}^t a(t-s)u(s) \, {\rm d}s &= A u(t) +
   f(\tilde{u}(t))+  g\, \left[r(\tilde u(t),\gamma(t)) +  \dot W(t)\right],
    \qquad t \in [0,T]
    \\
    u(t) &= u_0(t), \qquad t \le 0.
  \end{aligned}
\end{equation}

%\begin{thm}
 % \label{th:contraction on H}
 % Let $\beta > 0$ be a parameter to be chosen later. Let
  %\begin{align*}
   % \nop u\ncl^p_H = \bE \sup_{t \in [0,T]} e^{-\beta p t} |u(t)|^p
  %\end{align*}
  %be a norm on $L^p(\Omega;C([0,T];H))$; notice that this norm is
  %equivalent to the natural one. Then there exists $\delta < 1$ such that
  %\begin{align*}
   % \nop u_1 - u_2 \ncl_H = \nop \cQ(\tilde u_1) - \cQ(\tilde u_2)
    %\ncl_H \le \delta \nop \tilde u_1 - \tilde u_2 \ncl_H
  %\end{align*}
  %for every $\tilde u_1, \tilde u_2 \in L^p(\Omega;C([0,T];H))$.
%\end{thm}

Let $\tilde{u}_1$ and $\tilde{u}_2$ be two processes belonging to $L^p(\Omega;C([0, T ];H))$ and take $u_1 =\bQ(\tilde{u}_1)$
and $u_2 =\bQ(\tilde{u}_2)$.
  It follows from the uniqueness of the solution, proved in Proposition \eqref{eq:ev-lin},
  that the solution $u_i(t)$ ($i=1,2$) has the representation
  \begin{align*}
    u_i(t) =
    \begin{cases}
      J v_i(t), & t \in [0,T]
      \\
      u_0(t), & t \le 0
    \end{cases}
  \end{align*}
  where
  \begin{multline*}
    v_i(t) = e^{t B}v_0 + \int_0^t e^{(t-s)B} (I-B) P g\, r( \tilde u_i(s), \gamma(s)) \, {\rm d}s
    \\
    + \int_0^t e^{(t-s)B} (I-B) P f( \tilde u_i(s)) \, {\rm d}s+ \int_0^t e^{(t-s)B} (I-B) P g \, {\rm
      d}{W(s)}.
  \end{multline*}
  In particular,
  \begin{align*}
    U(t) = u_1(t) - u_2(t) =
    \begin{cases}
      J(v_1(t) - v_2(t)), & t \in [0,T]
      \\
      0, & t \le 0;
    \end{cases}
  \end{align*}
  then
  \begin{align*}
    \bE \sup_{t \in [0,T]} e^{-\beta p t} |U(t)|^p \le
    \|J\|^p_{L(X_\eta,H)} \bE \sup_{t \in [0,T]} e^{-\beta p t} \|v_1(t)
    - v_2(t)\|^p_\eta.
  \end{align*}
  Now we notice that the quantity on the right hand side can be treated as in Theorem
  \ref{thm:exuni} and the claim follows.

$ $

\textit{Second step}

It follows from the previous step that there exists at most a 
unique solution $u$ of problem (\ref{eq:ev-lin-2}); hence it only remains to prove the representation
formula (\ref{eq:u}) for $u$.

Let $\tilde f(t) = f( J \bx(t)) + g r( J \bx(t), \gamma(t))$; it is a consequence of Proposition
\ref{te:4.2} that $u$, defined in (\ref{eq:u}), is a weak solution
of the problem
\begin{equation}
  \label{eq:III-1}
  \begin{aligned}
    \frac{d}{dt} \int_{-\infty}^t a(t-s)u(s) \, {\rm d}s &= A u(t) +
    \tilde f(t) +  g  \dot W(t),
    \qquad t \in [0,T]
    \\
    u(t) &= u_0(t), \qquad t \le 0,
  \end{aligned}
\end{equation}
and the definition of $\tilde f$ implies that $u$ is a
weak solution of
\begin{equation}
  \label{eq:III-2}
  \begin{aligned}
    \frac{d}{dt} \int_{-\infty}^t a(t-s)u(s) \, {\rm d}s &= A u(t) +
    f(J \bx(t)) + g \left[ r( J \bx(t), \gamma(t)) + \dot W(t)
    \right], \ \ t \in [0,T]
    \\
    u(t) &= u_0(t), \qquad t \le 0,
  \end{aligned}
\end{equation}
that is problem (\ref{eq:ev-lin-2}).
\end{proof}

\section{The forward equation}\label{sec:forward}
%In addition to {\bf Hypothesis \ref{hp:coef}} we assume:
%\begin{hypothesis}
%   Operators $A + \nabla_u f (u)$ are dissipative (that is $\langle Ay,y\rangle + \langle \nabla_u f(u) y,y\rangle \leq 0$
%for all $u\in H$ and $u\in D(A)$.
%\end{hypothesis} 
%
%\begin{rem}
%   \begin{enumerate}
%    \item Assumption \ref{hp:diss} is natural as well if one thinks that we are asking differentiability of the value function of an infinite horizon control problem for a degenerate diffusion.
%\item Dissipativity of the the operators $A+\nabla_u f(u)$ implies dissipativity of $B+(I-B)P\nabla_x f(x) J$ (see the next result).
%\end{enumerate}
%\end{rem}
%
%\begin{lemma}
%     Computation of $J^{-1}$:
% \begin{align}\label{eq:y}
%   y(\kappa) = \begin{cases}
%       \frac{1}{\nu(I)} \frac{1}{\kappa} (\hat{a}(1)-A-\nabla_u f(u) ) Jy+ \frac{1}{\kappa +1}Jy, \kappa \in I,\\
%      \frac{1}{\kappa +1}Jy
%   \end{cases}
%\end{align}
%\end{lemma}
%
%\begin{proof}
%    Let $I\subset (0,\infty)$ be a closed interval such that $\nu(I) >0$. For every $u\in {\bf ???}$ we verifify formula
%\eqref{eq:J-1}. In fact, we have
%\begin{align*}
%  &  \int_{[0,\infty)}( -\kappa v(\kappa) + J v )\nu(\dd \kappa)  \\
%=\, & \int\limits_{[0,\infty)} -\kappa \left(\frac{1}{\kappa} (\hat{a}(1)-A-\nabla_u f(u) ) Jy+ \frac{1}{\kappa +1}Jy \right) {\bf 1}_I(\kappa)
%   -\kappa \frac{1}{\kappa+1} u + u \, \nu(\dd \kappa)\\
%= (\hat{a}(1)-A) 
%\end{align*}
%\end{proof}
In this section we state some properties of the forward equation \eqref{eq:state-eq-2} corresponding with our problem. 

In the following, $\bx$ will denote the solution of the uncontrolled equation \eqref{eq:state-eq-2}.
Under the assumption introduced in the previous sections we can give the regular dependence of $\bx$ on the initial condition $x$.
This result will be used later in order to characterize the solution of the stationary HJB in terms of the solution of a suitable stochastic backward differential equation and, consequently, to characterize the optimal control for our problem.  
\begin{prop}\label{prop:dif-state-eq}
    Under Hypotheses \ref{hp:a,A,f,g,r,W} and \ref{hp:expdec}, for any $p\geq 1$ the following holds. 
    \begin{enumerate}
        \item \label{it:cont} The map $x \mapsto \bx(t;x)$ defined on $ X_\eta$ and
        with values in $L^p(\Omega,C([0,T];X_\eta))$ is continuous.
        \item\label{it:eq} The map $x \mapsto \bx(t;x)$ has, at every point
         $x\in X_\eta$, a G\^ateaux derivative $\nabla_x \bx(\cdot;x)$. The map
        $(x,h) \mapsto \nabla_x \bx(\cdot;x)[h]$ is a continuous map
        from $ X_\eta \times X_\eta \to L^p(\Omega,C([0,T];X_\eta))$ and, 
       for every $h\in X_\eta$, the following equation holds $\PP$-a.s.:
        \begin{equation}\label{eq:diffx}
          \begin{aligned}
                \nabla_x \bx(t;x) [h]= e^{tB}h + \int_0^t e^{\tau B} (I-B)P\, 
                 \nabla_u f(
                J \bx(\tau;x)) J \nabla_x \bx(\tau;x) [h]\dd \tau.
\end{aligned}        
\end{equation}
       Finally, $\PP$-a.s., we have 
       \begin{align}\label{eq:bound-grad-x}
       |\nabla_x \bx(t;x) [h]| \leq C|h|,
       \end{align} for all $t >0$ and some $C>0$.
    \end{enumerate}
\end{prop}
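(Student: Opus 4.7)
The plan is to reuse the contraction machinery of Theorem~\ref{thm:exuni} on the space $L^p(\Omega;C([0,T];X_\eta))$ equipped with the weighted norm $\tn\cdot\tn_\eta$, applied successively to (i) the difference of two solutions (for continuity), and (ii) a formal linearized equation which will define the candidate G\^ateaux derivative.

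For assertion \ref{it:cont}, I would subtract the mild formulations of $\bx(\cdot;x_1)$ and $\bx(\cdot;x_2)$ for $x_1,x_2\in X_\eta$. The stochastic convolution cancels because $g$ does not depend on the initial datum, leaving
\[
\bx(t;x_1)-\bx(t;x_2)=e^{tB}(x_1-x_2)+\int_0^t e^{(t-\tau)B}(I-B)P\,[f(J\bx(\tau;x_1))-f(J\bx(\tau;x_2))]\,\dd\tau.
\]
Using the Lipschitz property of $f$, boundedness of $J:X_\eta\to H$, and the analytic estimate $\|e^{(t-\tau)B}(I-B)^{1-\theta+\eta}\|_{L(X)}\le C(t-\tau)^{-(1-\theta+\eta)}$ (which is integrable in $\tau$ because $1-\theta+\eta<1$ by \eqref{eq:eta-theta}), the same computation as in Theorem~\ref{thm:exuni} gives, for $\beta$ large enough,
\[
\tn\bx(\cdot;x_1)-\bx(\cdot;x_2)\tn^p_\eta\le C\|x_1-x_2\|^p_\eta,
\]
i.e.\ Lipschitz, and in particular continuous, dependence on $x$.

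For assertion \ref{it:eq}, I would first solve the linearized equation for an unknown $\eta^h\in L^p(\Omega;C([0,T];X_\eta))$:
\[
\eta^h(t)=e^{tB}h+\int_0^t e^{(t-\tau)B}(I-B)P\,\nabla_u f(J\bx(\tau;x))\,J\eta^h(\tau)\,\dd\tau.
\]
Because $\|\nabla_u f\|_{L(H)}\le C$ by Hypothesis~\ref{hp:a,A,f,g,r,W}(3), the right hand side is affine and uniformly Lipschitz in $\eta^h$, and the same contraction argument as in Theorem~\ref{thm:exuni} yields a unique solution, evidently linear in $h$, with $\tn\eta^h\tn_\eta\le C\|h\|_\eta$. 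Taking the $X_\eta$-norm inside the integral and applying a (weakly singular) Gronwall inequality yields the pathwise pointwise bound \eqref{eq:bound-grad-x}.

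It remains to identify $\eta^h$ with the G\^ateaux derivative. Setting, for $\epsilon\ne 0$,
\[
\rho^\epsilon(t):=\frac{\bx(t;x+\epsilon h)-\bx(t;x)}{\epsilon}-\eta^h(t),
\]
using the mean value formula $f(b)-f(a)=\int_0^1\nabla_u f(a+s(b-a))(b-a)\,\dd s$ and the decomposition $\epsilon^{-1}[\bx(\tau;x+\epsilon h)-\bx(\tau;x)]=\eta^h(\tau)+\rho^\epsilon(\tau)$, one derives for $\rho^\epsilon$ an equation of the same affine contractive form as the linearized equation plus a remainder whose $\tn\cdot\tn_\eta$-norm tends to $0$ as $\epsilon\to0$; this last fact uses assertion \ref{it:cont}, the strong continuity of $\nabla_u f$ (embedded in $f\in\cG^1(H,H)$), and dominated convergence on the $s$-integral via the uniform bound $\|\nabla_u f\|_{L(H)}\le C$. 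Applying the contraction then gives $\tn\rho^\epsilon\tn_\eta\to0$, hence existence of $\nabla_x\bx(t;x)[h]=\eta^h(t)$ together with the representation \eqref{eq:diffx}; joint continuity of $(x,h)\mapsto\eta^h$ follows from the continuous dependence of the fixed point on the parameters $x$ and $h$, combined with assertion \ref{it:cont}. The main technical obstacle throughout is the unboundedness of $(I-B)P$; it is absorbed, exactly as in Theorem~\ref{thm:exuni}, by the analyticity of $e^{tB}$ together with the strict inequality $\theta-\eta>\tfrac12$ from \eqref{eq:eta-theta}, which keeps every $\tau$-integrand integrable and lets us drive the contraction constant below $1$ by choosing $\beta$ large.
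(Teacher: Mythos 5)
Your treatment of assertions \ref{it:cont} and \ref{it:eq} is fine and is essentially the standard fixed-point/linearization argument (the paper itself simply cites \cite[Proposition 6.2]{CoMa11} for these two points): subtracting mild formulations, noting the cancellation of the stochastic convolution, solving the affine linearized equation by the same contraction as in Theorem~\ref{thm:exuni}, and identifying it with the G\^ateaux derivative via a remainder estimate. No objection there, beyond the usual hand-waving about joint continuity of the fixed point in the parameters.

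The genuine gap is in the bound \eqref{eq:bound-grad-x}. The statement (and the paper's proof) requires a single constant $C$, independent of $t$, valid for \emph{all} $t>0$ on the infinite horizon; this is exactly why Hypothesis~\ref{hp:expdec} appears among the assumptions, and it is what is needed later (Theorem~\ref{thm:GatY} and Proposition~\ref{prop:est}), where the derivative of the forward process must be bounded uniformly over $[0,n]$ with $n\to\infty$. Your argument never uses Hypothesis~\ref{hp:expdec}: you work on $[0,T]$ with the $\beta$-weighted norm and the estimate $\|e^{(t-\tau)B}(I-B)^{1-\theta+\eta}\|_{L(X)}\le C(t-\tau)^{-(1-\theta+\eta)}$, and a weakly singular Gronwall inequality on $[0,T]$ then produces a constant depending on $T$ (growing with the horizon), which does not give \eqref{eq:bound-grad-x} as stated. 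The paper instead invokes Proposition~\ref{prop:expdec} (negative type of $B$, a consequence of Hypothesis~\ref{hp:expdec}) to insert the decay factor, estimating the integrand by $e^{-\omega s}s^{\theta-\eta-1}$ with $\omega>0$, so that the convolution kernel is integrable on the whole half-line; Gronwall then yields a bound uniform in $t$ and $x$. To repair your proof you should replace the purely singular estimate by $\|e^{sB}(I-B)^{1-\theta+\eta}\|_{L(X)}\le Ce^{-\omega s}s^{-(1-\theta+\eta)}$ and run the Gronwall argument with this integrable kernel, as the paper does.
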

\begin{proof}
     Points \ref{it:cont} and \ref{it:eq} are proved, for instance, in  \cite[Proposition 6.2]{CoMa11}.
     To prove the \eqref{eq:bound-grad-x} we simply notice that 
     \begin{multline*}
        \left\|\int_0^t e^{\tau B} (I-B)P\, 
                 \nabla_u f(
                J \bx(\tau;x)) J \nabla_x \bx(\tau;x) [h]\dd \tau
                \right\|_\eta \\
                \leq |P|_{L(H,X_\theta)} \|\nabla_u f\|_{L(H,H)} |J|_{L(X_\eta,H)} \int_0^t e^{-\omega s} s^{-1-\eta+\theta}  |\nabla_x \bx(s;x) [h]| \dd s.
     \end{multline*} 
     By application of Gronwall's lemma we thus obtain
     \begin{align*}
       | \nabla_x \bx(t;x) [h]| \leq C  |h|, \quad t\geq 0, \ \ \P -a.s. 
     \end{align*}
     where $C$ is a positive constant independent of $t$ and $x$.  
%To prove the last assertion we proceed as in \cite{HuTe/2006} by a classical approximation argument
%(notice that the equation for $\nabla_x \bx$ has no stochastic integral term).
\end{proof}

As we will see later, an important point in order to study the HJB equation corresponding with our problem is that of extending the map $h \mapsto \nabla_x \bx(t;x)(I-B)^{1-\theta}Pg[h]$ - a priori defined on $X_{1-\theta}$ - to the
whole space $X$. This result is stated below. 
\begin{prop}\label{prop:diff-state-eq-gen}
    Under assumptions \ref{hp:a,A,f,g,r,W} and \ref{hp:expdec} the map $h\mapsto \nabla_x \bx(t;x) (I-B)^{1-\theta} Pg [h]$ - a priori defined on $X_{1-\theta}$ - can be estended to the whole space $X$ and %the map $(t,x,h)\mapsto \nabla_x \bx (t;x)(I-B)^{1-\theta}[h]$ 
    it is continuous from $[0,T] \times X_\eta \times X $ to $ L^\infty(\Omega;C([0,T];X_\eta))$ for any $T>0$. Finally, there exists a constant
    \begin{align}\label{eq:grad-x-gen}
        | \nabla_x \bx(t;x) (I-B)^{1-\theta} Pg [h]| \leq C |h|, 
        \end{align}
        for all $t\geq 0, x\in X_\eta, h\in X$, with $C$ indenpendent of $t$ and $x$.
    \end{prop}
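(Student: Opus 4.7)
The plan is to bound the composition $\nabla_x \bx(t;x)(I-B)^{1-\theta}Pg$ by combining (a) the fact that the interpolation conditions \eqref{eq:eta-theta} already force $(I-B)^{1-\theta}Pg$ to land in $X_\eta$ through an embedding, (b) the $\mathbb{P}$-a.s.\ uniform bound \eqref{eq:bound-grad-x} from Proposition \ref{prop:dif-state-eq}, and (c) the exponential decay of $e^{tB}$ from Proposition \ref{prop:expdec}, which is what makes the constant in \eqref{eq:bound-grad-x} independent of the horizon. Once the bound is in place, continuity and extension to all of $X$ follow by standard density and Bounded Linear Transformation arguments.

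The first step is to observe that $Pg$ is bounded from $\Xi$ into $X_\theta$, since $P\in L(H,X_\theta)$ and $g\in L_2(\Xi,H)$, so $\|Pg[h]\|_\theta \leq C|h|$. By the functional calculus for the sectorial operator $B$, the operator $(I-B)^{1-\theta}$ maps $X_\theta$ continuously into $X_{2\theta-1}$. The strict inequality $\theta-\eta>\tfrac{1}{2}$ from \eqref{eq:eta-theta} implies $2\theta-1>\eta$, hence the continuous embedding $X_{2\theta-1}\hookrightarrow X_\eta$ yields
\begin{align*}
\|(I-B)^{1-\theta}Pg[h]\|_\eta \leq C\,|h|.
\end{align*}

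Applying \eqref{eq:bound-grad-x} with the $X_\eta$-valued increment $k=(I-B)^{1-\theta}Pg[h]$ gives, $\mathbb{P}$-a.s.,
\begin{align*}
\|\nabla_x \bx(t;x)(I-B)^{1-\theta}Pg[h]\|_\eta \leq C\,\|(I-B)^{1-\theta}Pg[h]\|_\eta \leq C\,|h|,
\end{align*}
uniformly in $t\geq 0$ and $x\in X_\eta$, which is exactly \eqref{eq:grad-x-gen}. Continuity of $(x,h)\mapsto \nabla_x \bx(\cdot;x)(I-B)^{1-\theta}Pg[h]$ into $L^\infty(\Omega;C([0,T];X_\eta))$ is then obtained by composing the joint continuity of $(x,k)\mapsto \nabla_x \bx(\cdot;x)[k]$ (part \ref{it:eq} of Proposition \ref{prop:dif-state-eq}) with the bounded linear map $(I-B)^{1-\theta}Pg$. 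The linear extension from the natural domain to $X$ is finally produced by density, since the bound is uniform and linear.

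The main obstacle I anticipate is not the embedding chain itself, which is a direct consequence of \eqref{eq:eta-theta}, but rather establishing the uniformity of the estimate in $t$ over the infinite horizon. This uniformity is crucially built into \eqref{eq:bound-grad-x} via the exponential stability of $e^{tB}$ (Proposition \ref{prop:expdec}); without it, the singular Gronwall argument underlying Proposition \ref{prop:dif-state-eq} would only produce a constant growing in $T$, and the estimate would be useless for the infinite-horizon HJB analysis developed in the sequel.
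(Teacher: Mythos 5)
There is a genuine gap: you have bounded a different object from the one the proposition is about. By letting $Pg$ act first, you reduce everything to the (correct, but essentially trivial) observation that $(I-B)^{1-\theta}Pg$ is bounded from $\Xi$ into $X_{2\theta-1}\hookrightarrow X_\eta$ thanks to \eqref{eq:eta-theta}, after which \eqref{eq:bound-grad-x} applies to the single direction $k=(I-B)^{1-\theta}Pg[h]$. Under that reading, however, the map is defined for every $h$ from the outset and there is nothing to extend, whereas the proposition asserts an extension in the direction variable $h$ from $X_{1-\theta}$ to all of $X$, continuity in $(x,h)\in X_\eta\times X$, and the bound \eqref{eq:grad-x-gen} for every $h\in X$. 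This is also what is used later (Theorem \ref{thm:GatY}, Proposition \ref{prop:est}, Corollary \ref{cor:Y-v}, Definition \ref{def:solHJB}): there the operator $\nabla_x\bx(t;x)(I-B)^{1-\theta}$ must act on arbitrary elements of $X$, for instance on $(I-B)^{\theta}Pg\,e_j$, which are not of the special form you treat. For a generic $h\in X$ the element $(I-B)^{1-\theta}h$ does not belong to $X_\eta$ (it lives in the extrapolation scale $X_{\theta-1}$), so \eqref{eq:bound-grad-x} cannot be invoked with $k=(I-B)^{1-\theta}h$; this unboundedness is exactly the difficulty the proposition must overcome, and your argument never confronts it. Your closing ``density plus bounded extension'' step is empty for the same reason: an extension by density would require an estimate $|\nabla_x\bx(t;x)(I-B)^{1-\theta}h|\le C|h|_X$ for $h$ in a dense subspace of $X$, and your computation only yields a bound in terms of $|h|_\Xi$ for directions in the range of $Pg$.

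The paper's proof supplies precisely the missing estimate. It writes the perturbation equation satisfied by $\nabla_x\bx(t;x)(I-B)^{1-\theta}[h]$, whose free term is $e^{tB}(I-B)^{1-\theta}h$; by analyticity of $e^{tB}$ together with Proposition \ref{prop:expdec} this term is well defined for every $h\in X$ and bounded by $Ce^{-\omega t}t^{\theta-1}|h|$ (the smoothing of the semigroup absorbs the unbounded factor $(I-B)^{1-\theta}$), and the convolution term involving $(I-B)P\,\nabla_u f(J\bx)J$ is then controlled by a singular Gronwall argument, the exponential decay making the constant uniform in $t\ge 0$ — the point you correctly flagged as essential for the infinite horizon, but which by itself does not repair the argument. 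If you redo your proof along these lines, proving the bound in terms of $|h|_X$ on a dense subspace via the variation-of-constants formula and Gronwall and only then extending by density, you recover the statement as it is actually needed in the sequel.
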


    \begin{proof}
         We proceed by proving that the norm of $\nabla_x \bx (t;x) (I-B)^{1-\theta}Pg$ in the space of linear operators on $X$ is finite. 
         In fact, taking into account Proposition \ref{prop:dif-state-eq} we see that, for any $h\in X_{1-\theta}$, 
         the process $\nabla_x \bx(t;x) (I-B)^{1-\theta} Pg[h]$ satisfies the following equation:
          \begin{multline*}
                 \nabla_x \bx(t;x) (I-B)^{1-\theta}[h]= e^{tB}(I-B)^{1-\theta}h\\ + \int_0^t e^{(t-\tau) B} (I-B)P\, 
                 \nabla_u f(
                J \bx(\tau;x)) J \nabla_x \bx(\tau;x)(I-B)^{1-\theta} [h]\dd \tau.
  \end{multline*} 
      Hence, recalling that $\nabla_x f$ is bounded and Proposition \ref{prop:expdec}, we can estimate $|\nabla_x \bx(t;x) (I-B)^{1-\theta} Pg[h]|$ as follows:
      \begin{multline*}
         |\nabla_x \bx(t;x) (I-B)^{1-\theta} Pg[h]| \leq  
         e^{-\omega t}t^{\theta-1}  \\
         + C_{f} \|J\| \int_0^t |\nabla_x \bx(t;x) (I-B)^{1-\theta} Pg[h]|e^{-(t-\tau)\omega}(\tau-t)^{\theta-1} |h| \dd \tau 
      \end{multline*}
      Now the bound \eqref{eq:grad-x-gen} follows by an easy application of Gronwall's lemma, while for continuity, we can refer to \cite[Proposition 6.2]{CoMa11}.
    \end{proof}

\section{The backward equation on an infinite horizon}\label{sec:backward}

%{\bf \begin{enumerate}
%   \item esistenza della backward su tutto $[0,\infty)$ e proprietà di continuità (riferimento: Proposizione 3.2 Hu Tessitore 2006, ma la dimostrazione è tratta da Briand e Hu --- vedi pag. 830);
%\item Differenziabilità della backward, limitatezza della Y e del suo gradiente (riferimento: Teorema 3.1 Hu Tessitore 2006);
%\item maggiore regolarità della soluzione della backward --- serve per poter trattare l'HJB. (riferimento: 4.4. DebFuTe). 
%\end{enumerate}}
In this section we consider the backward stochastic differential equation in the unknown
$(Y,Z)$:
\begin{align}\label{eq:bsde2}
       Y(\tau)=Y(T) + \int_\tau^T( \lambda Y(\sigma))-\psi(\bx(\sigma;x),Z(\sigma))\dd \sigma+\int_\tau^T Z(\sigma)\dd W(\sigma), 
\end{align}
where $0\leq \tau \leq T<\infty$, $\lambda >0$, $\bx(\cdot;x)$ is the solution of the uncontrolled equation \eqref{eq:unc} 
and $\psi$ is the Hamiltonian function relative to the control problem described in
Section \ref{sec:intro}. More precisely, for $x\in X_\eta,z\in \Xi^\star$ we have
\begin{align}\label{eq:ham2}
      \psi(x,z):= \inf\left\{ \ell(Jx,\gamma)+z\cdot r(J x, \gamma): \ \gamma \in \cU\right\}.
\end{align} 
We require the following assumption on $\psi$:
\begin{hypothesis}\label{hp:grad-psi}
\begin{enumerate}
  \item $\psi$ is uniformly Lipschitz continuous in $z$, with Lipschitz constant $K$, that is:
  \begin{align*}
      |\psi(x,z_1)-\psi(x,z_2)| \leq K \| z_1-z_2\|_{\Xi^*}.
  \end{align*}
  \item $\sup_{x\in X} |\psi(x,0)| :=M <\infty$. 
   \item   The map $\psi$ is G\^ateaux differentiable on $X_\eta \times \Xi^\star$ and the maps 
      $(x,h,z) \mapsto \nabla_x \psi(x,z)[h]$ and $(x,z,\zeta) \mapsto \nabla_z \psi(x,z)[\zeta]$ are continuous on $X_\eta\times X \times \Xi^\star$ and $X_\eta\times \Xi^\star \times \Xi^\star$ respectively. 
      \end{enumerate}
\end{hypothesis}

%We now prove some additional properties of the function $\psi$, which we will use in the sequel:
%\begin{proposition}
 %    Under Hypotheses \ref{hp:a,A,f,g,r,W}, \ref{hp:l} and \ref{hp:grad-psi} the following holds:
  %  \begin{enumerate}
 %         \item 
 % there exists a constant $L>0$ such that
 %         $$
 %            |\psi(x,z_1)-\psi(x,z_2)| \leq L \|z_1-z_2\|_{\Xi^\star}
 %         $$
 %          for every $x\in X_\eta$, $z_1,z_2 \in \Xi^\star$.
 %         \item \label{it:ii} there exists some constant $C>0$ such that
 %         $$
 %                   |\nabla_x \psi(x,z)[h]|\leq C|h|(1+\|x\|_{\eta} )\quad and \quad 
  %              |\nabla_z \psi(x,z)[\zeta]|\leq C|\zeta|,  \quad h\in X, \zeta \in \Xi^\star.
   %       $$
   % \end{enumerate}
%\end{proposition}
%\begin{rem} \label{rem:crescita-psi}
%We notice that the above proposition yields the inequality
%$$
%      |\psi(x,z)|\leq C(1+\|x\|_\eta^2 )
%$$
%for some positive constant $C>0$ and any $x\in X_\eta,z\in \Xi^\star$.
%\end{rem}

The existence and uniqueness of solution to \eqref{eq:bsde2}
under Hypothesis \ref{hp:grad-psi} was first studied (even though for more general coefficients $\psi$) by Briand and Hu in \cite{BrHu08} and successively by Royer in \cite{Ro04}. Their result in valid when $W$ is a finite dimensional Wiener process but the extension to the case in which $W$ is
a Hilbert-valued Wiener process is immediate.

In our context the result reads as follows.
% is understood in the following sense:
%we look for a pair of processes $(Y,Z)$, progressively measurable, which, for any $0\leq \tau\leq T$
%satisfies the equality
%\begin{align*}
%     Y(\tau)=Y(T)+\int_\tau^T \psi(\bx(\sigma;x),Y(\sigma),Z(\sigma))\dd \sigma 
%     + \lambda \int_\tau^T Y(\sigma) \dd \sigma + \int_\tau^T Z(\sigma) \dd W(\sigma)
%\end{align*}
%and, for suitable $\beta<0$ and $p>2$, the growth constraint 
%\begin{multline*}
%     \bE \sup_{\tau \geq 0}|Y(\tau)|^p e^{p \beta \tau} +   
%      \bE \left( \int_0^\infty e^{2\beta \sigma}|Y(\sigma)|^2 \dd \sigma\right)^{p/2}\\
%      + \bE \left( \int_0^\infty e^{2\beta \sigma}\|Z(\sigma)\|_{\Xi^\star}^2\dd \sigma\right)^{p/2}
%     <\infty.
%\end{multline*}
%
%We can now state the following result on existence, uniqueness and smoothness of the solution to equation \eqref{eq:bsde2}:
\begin{prop}\label{prop:bsde}
     Assume Hypothesis \ref{hp:l} and \ref{hp:grad-psi}. 
    Then we have:
   \begin{enumerate}
        \item For any $x\in X_\eta$, there exists a solution $(Y,Z)$ to BSDE \eqref{eq:bsde2} such that $Y$ is a continuous process bounded by $\frac{M}{\lambda}$, and $Z \in L^2(\Omega;L^2(0,\infty;\Xi))$ with 
   $\bE \int_0^\infty e^{-2\lambda s }|Z(s)|^2\dd s <\infty$. Moreover, the solution is unique in the class of processes $(Y,Z)$ such that $Y$ is continuous and uniformly bounded, and
      $Z$ belongs to $L^2_{loc}(\Omega,L^2(0,\infty;\Xi^\star))$. In the following we will denote such a solution 
       by $Y(\cdot;x)$ and $Z(\cdot;x)$.
   \item Denoting by $(Y(\cdot;x,n),Z(\cdot;x,n))$ the unique solution of the following BSDE
     (with finite horizon):
   \begin{align}\label{eq:bsde-n}
         Y(\tau;x,n)= \int_\tau^n( \psi(\bx(\sigma;x),Z(\sigma))-\lambda Y(\sigma))\dd \sigma+\int_\tau^n Z(\sigma;x,n)\dd W(\sigma), 
   \end{align}
then $|Y(\tau;x,n)| \leq \frac{M}{\lambda}$ and the following convergence rate holds:
   \begin{align*}
         |Y(\tau;x,n)-Y(\tau;x)| \leq \frac{M}{\lambda} \exp \left\{ -\lambda (n-\tau)\right\}.
   \end{align*}
    Moreover, 
   \begin{align*}
         \bE \int_0^\infty e^{-2\lambda \sigma} \| Z(\sigma;x,n)-Z(\sigma;x)\|^2 \dd \sigma \ \to \ 0.
   \end{align*}
   \item For all $T>0$ and $p>1$, the map $x \mapsto (\left.Y(\cdot;x)\right|_{[0,T]},\left.Z(\cdot;x)\right|_{[0,T]})$ is continuous from $X_\eta$ to the space $L^p(\Omega,C([0,T];\R)) \times L^p(\Omega,L^2([0,T];\Xi^\star))$.

\end{enumerate}
\end{prop}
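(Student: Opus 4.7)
The plan is to approximate the infinite-horizon problem by the finite-horizon BSDEs \eqref{eq:bsde-n} (which exist and are unique by classical Pardoux--Peng theory, since Hypothesis \ref{hp:grad-psi} makes the driver uniformly Lipschitz in $(y,z)$) and then pass to the limit $n\to\infty$, following the methodology of Briand--Hu \cite{BrHu08} and Royer \cite{Ro04} transposed to cylindrical noise. The key quantitative ingredients are an $n$-independent uniform bound on $Y(\cdot;x,n)$ and an exponential Cauchy estimate in $n$; from these, items (1) and (2) essentially write themselves.

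To obtain the a priori bound $|Y(\tau;x,n)| \leq M/\lambda$ I would rewrite the driver as
\begin{equation*}
\lambda y - \psi(\bx(\sigma;x),z) = \lambda y - \psi(\bx(\sigma;x),0) - \beta_\sigma \cdot z,
\end{equation*}
where $\beta_\sigma$ is a bounded adapted $\Xi^\star$-valued process, with $|\beta_\sigma|\leq K$, produced by the Lipschitz property of $\psi$ in $z$. A Girsanov change of measure absorbs $\beta_\sigma\cdot z$ into the Brownian motion, and then comparison with the deterministic ODE $y'(t)=\lambda y(t)-M$ under the new probability yields the claimed bound uniformly in $n,\tau,x$. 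For the Cauchy estimate, take $n'>n$: the difference $\delta Y := Y(\cdot;x,n')-Y(\cdot;x,n)$ satisfies a linear BSDE on $[0,n]$ with terminal datum $Y(n;x,n')$ (bounded by $M/\lambda$) and Lipschitz-controlled driver. Linearising the $\psi$-difference as $\tilde\beta_\sigma\cdot \delta Z$ and running the Girsanov argument once more yields
\begin{equation*}
|\delta Y(\tau)| \leq (M/\lambda)\,e^{-\lambda(n-\tau)}.
\end{equation*}
Hence $Y(\cdot;x,n)$ is Cauchy uniformly on compacts, and applying Itô's formula to $e^{-2\lambda \sigma}|\delta Y|^2$ produces the Cauchy property of $Z(\cdot;x,n)$ in the weighted $L^2$-space; the limit solves \eqref{eq:bsde2}, and the stated convergence rate for $Y$ follows on letting $n'\to\infty$.

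For uniqueness in the class specified in (1), I would apply Itô to $e^{-2\lambda \sigma}|\Delta Y|^2$ for the difference of two solutions, linearise the driver as above and send the horizon to infinity; the boundary term vanishes thanks to the uniform boundedness and the exponential weight, forcing $\Delta Y\equiv 0$. For item (3), fix $T>0$ and combine the continuity of $x\mapsto \bx(\cdot;x)$ from Proposition \ref{prop:dif-state-eq} with a standard BSDE stability estimate on $[0,T]$ with terminal datum $Y(T;x)$; the continuity of $x\mapsto Y(T;x)$ that this requires as input is obtained by combining the exponential rate of item (2) (uniform in $x$) with the continuity of the finite-horizon approximants $x\mapsto Y(T;x,N)$ for $N\geq T$, which is classical for Lipschitz BSDEs driven by cylindrical Wiener processes.

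The main obstacle I expect lies in the interplay between the uniform bound and the exponential Cauchy estimate: both rest on absorbing the unbounded $Z$-dependent part of the driver via a Girsanov-type change of measure, which requires checking that the exponential local martingale used is a genuine martingale — guaranteed here by the uniform Lipschitz bound on $\psi$ in $z$ — and that the strict positivity of $\lambda$ provides the damping reflected in the $1/\lambda$ factor. A minor additional verification in the cylindrical setting is that $\beta_\sigma$ has the correct $\Xi^\star$-valued measurability so that the Girsanov transform of the $\Xi$-valued Wiener process $W$ is well-defined, which is standard once $Z\in L^2_{loc}$ is known.
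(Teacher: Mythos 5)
Your proposal is correct and follows essentially the same route the paper relies on: the paper gives no independent proof of this proposition but invokes the finite-horizon approximation scheme of Briand--Hu \cite{BrHu08} and Royer \cite{Ro04} (in the form used by Hu--Tessitore \cite{HuTe/2006}), and your argument --- uniform bound $M/\lambda$ via linearization and Girsanov, exponential Cauchy estimate in the horizon, It\^o on $e^{-2\lambda\sigma}|\delta Y|^2$ for the $Z$-convergence, and continuity in $x$ from the uniform rate plus finite-horizon stability --- is precisely that scheme transposed to the cylindrical Wiener setting, which the paper itself describes as an immediate extension.
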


   %  \begin{proof}
     %    The claim follows directly from Proposition 3.2 in Hu and tessitore (2006) \cite{HuTe/2006} and it comes from 
%the application of standard arguments for finite horizon BSDEs.
     %\end{proof}

We need to study the regularity of $Y(\cdot;x)$. More precisely we would like to show that $Y(0;x)$ is G\^ateaux differentiable with to respect the initial condition $x$ and that both $Y(0;x)$ and $\nabla_x Y(0;x)$ turn out to be bounded. 

The following result is one of the crucial point of the paper.
\begin{thm}\label{thm:GatY}
    Under Hypotheses \ref{hp:a,A,f,g,r,W}, \ref{hp:expdec},
     \ref{hp:l} and \ref{hp:grad-psi}, the map $x\mapsto Y(0;x)$ is G\^ateaux differentiable. Moreover, 
$|Y(0;x)|+|\nabla_x Y(0;x)|\leq c$. 
\end{thm}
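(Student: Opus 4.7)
The plan is to approximate $Y(0;x)$ by the finite-horizon BSDE solutions $Y(0;x,n)$ from Proposition~\ref{prop:bsde}(2), establish Gâteaux differentiability together with a uniform-in-$n$ bound on $|\nabla_x Y(0;x,n)[h]|$, and then pass to the limit as $n\to\infty$. The boundedness $|Y(0;x)|\le M/\lambda$ is already furnished by Proposition~\ref{prop:bsde}(1), so the new work lies in controlling the gradient.

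First I would fix $n$ and establish Gâteaux differentiability of $x\mapsto (Y(\cdot;x,n),Z(\cdot;x,n))$. Since \eqref{eq:bsde-n} is a finite-horizon BSDE with Lipschitz driver in $(y,z)$ and smooth forward dependence on $x$ via Proposition~\ref{prop:dif-state-eq}, the classical theory (as developed in infinite dimensions in \cite{FuTe/2002,FuTe/2004}) applies. The directional derivative $(\nabla_x Y(\cdot;x,n)[h],\nabla_x Z(\cdot;x,n)[h])$ solves the linear BSDE
\begin{align*}
\nabla_x Y(\tau;x,n)[h] &= \int_\tau^n \bigl[\nabla_x\psi\cdot\nabla_x\bx(\sigma;x)[h] + \nabla_z\psi\cdot\nabla_x Z(\sigma;x,n)[h] \\
&\qquad -\lambda\,\nabla_x Y(\sigma;x,n)[h]\bigr]\,d\sigma + \int_\tau^n \nabla_x Z(\sigma;x,n)[h]\,dW(\sigma),
\end{align*}
where the gradients of $\psi$ are evaluated at $(\bx(\sigma;x),Z(\sigma;x,n))$.

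Next I would apply a Girsanov transformation to eliminate the $\nabla_z\psi$ term. By Hypothesis~\ref{hp:grad-psi}(1), $\|\nabla_z\psi\|\le K$, so there exists an equivalent measure $\tilde{\mathbb{P}}$ under which $\tilde W(\tau)=W(\tau)-\int_0^\tau \nabla_z\psi(\bx(\sigma;x),Z(\sigma;x,n))^\star\,d\sigma$ is a cylindrical Wiener process. Under $\tilde{\mathbb{P}}$ the linear BSDE loses the $\nabla_z\psi$ term, so applying Itô's formula to $e^{-\lambda\tau}\nabla_x Y(\tau;x,n)[h]$ and taking conditional expectation yields the representation
\begin{align*}
\nabla_x Y(0;x,n)[h] = \tilde{\mathbb{E}}\!\left[\int_0^n e^{-\lambda\sigma}\,\nabla_x\psi(\bx(\sigma;x),Z(\sigma;x,n))\,\nabla_x\bx(\sigma;x)[h]\,d\sigma\right].
\end{align*}
Combined with the uniform bound $|\nabla_x\bx(\sigma;x)[h]|\le C|h|$ from Proposition~\ref{prop:dif-state-eq} and boundedness of $\nabla_x\psi$ (which follows from the structure of the Hamiltonian \eqref{eq:ham2} under Hypothesis~\ref{hp:l} and the boundedness assumptions on $r$ in Hypothesis~\ref{hp:a,A,f,g,r,W}), the discount $e^{-\lambda\sigma}$ gives $|\nabla_x Y(0;x,n)[h]|\le C'|h|/\lambda$ uniformly in $n$ and $x$.

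Finally I would pass to the limit $n\to\infty$. The exponentially fast convergence $|Y(0;x,n)-Y(0;x)|\le (M/\lambda)e^{-\lambda n}$ from Proposition~\ref{prop:bsde}(2) supplies pointwise convergence of $Y(0;x,n)$, and the uniform bound on $\nabla_x Y(0;x,n)[h]$ combined with continuity in $(x,h)$ uniform in $n$ (using Proposition~\ref{prop:bsde}(3) and Proposition~\ref{prop:dif-state-eq}) lets one identify the limit with the Gâteaux derivative of $Y(0;x)$ and preserves the bound. The main obstacle is precisely this uniform-in-$n$ control of the representation: the Girsanov density involves $\nabla_z\psi$ over the growing interval $[0,n]$, so one must combine its boundedness with the discount $e^{-\lambda\sigma}$ and the exponential stability of $e^{tB}$ (Proposition~\ref{prop:expdec}) to prevent the bound from degenerating; this is where the condition $\lambda>0$ and Hypothesis~\ref{hp:expdec} enter decisively.
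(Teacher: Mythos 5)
Your skeleton coincides with the paper's own proof: approximate by the finite-horizon BSDEs \eqref{eq:bsde-n}, differentiate them in $x$ using the finite-horizon theory of Fuhrman--Tessitore, eliminate the $\nabla_z\psi$ term by a Girsanov change of measure so that the uniform bound $|\nabla_x \bx(\cdot;x)[h]|\le C|h|$ of Proposition \ref{prop:dif-state-eq} and the discount $e^{-\lambda\sigma}$ give $|\nabla_x Y(0;x,n)[h]|\le C|h|$ uniformly in $n$ (the paper invokes the argument of Briand--Hu rather than writing your explicit representation, but the mechanism is identical), and then let $n\to\infty$. Up to the uniform gradient bound the proposal is sound.

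The gap is in the limit passage. A uniform bound on $\nabla_x Y(0;\cdot,n)$ together with the exponentially fast convergence $Y(0;x,n)\to Y(0;x)$ only makes the limit Lipschitz; it does not make it G\^ateaux differentiable. One must prove that the derivatives themselves converge, and the property you invoke --- ``continuity in $(x,h)$ uniform in $n$'' --- is not furnished by Proposition \ref{prop:bsde}(3) (a statement on a fixed finite horizon, with no uniformity in $n$) nor by Proposition \ref{prop:dif-state-eq}; it is exactly what has to be established. The paper does this by observing that $(\nabla_x Y(\cdot;x,n)h,\nabla_x Z(\cdot;x,n)h)$ is bounded in the weighted space $\mathcal{M}^{2,-2\lambda}$, extracting a weak limit, identifying it (\`a la Hu--Tessitore) as the solution $(U,V^1)$ of a limit BSDE on the half-line, deducing that $\nabla_x Y(0;x,n)h\to U(0;x)h$ with $U(0;x)$ linear, bounded by $C|h|$ and continuous in $x$, and only then exchanging the limits $t\to0$ and $n\to\infty$ through the mean-value formula
\begin{align*}
   \frac{Y(0;x+th,n)-Y(0;x,n)}{t}=\int_0^1 \nabla_x Y(0;x+\theta t h,n)h\,\dd\theta .
\end{align*}
Your write-up needs this convergence-and-identification step (or an equivalent one) to be complete. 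A minor further caveat, shared with the paper: boundedness of $\nabla_x\psi$ does not actually follow from Hypotheses \ref{hp:l} and \ref{hp:a,A,f,g,r,W} (the term $z\cdot r(Jx,\gamma)$ has $x$-derivative of size proportional to $|z|$, and $\ell$ is not assumed Lipschitz), so it is an implicit extra assumption rather than a consequence of the stated structure.
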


\begin{proof}
   The argument follows essentially the proof of \cite[Theorem 3.1]{HuTe/2006}. 
   An important point is the boundedness of 
   $\nabla_x \bx (t,x) [h]$ ($\P$-a.s. and for any $t\geq 0$), which was proved in Proposition \ref{prop:dif-state-eq}. 
   We first recall that, under our assumptions (see \cite[Proposition 5.2]{FuTe/2002}), the map $x\mapsto (Y(t;x,n),Z(t;x,n))$ considered in Proposition \ref{prop:bsde} is G\^ateaux differentiable from $X_\eta$ to $L^p(\Omega;C(0,T;\R) \times L^p(\Omega; L^2(0,T;\Xi^\star))$ for all $p\geq 2$. Denoting by $\nabla_x Y(t;x,n)[h],\nabla_x Z(t;x,n)[h]$ the partial G\^ateaux derivative with respect to $x$ in the direction $h\in X$, the processes 
$(\nabla_x Y(t;x,n)[h],\nabla_x Z(t;x,n)[h])_{t\in [0,n]}$ solves the equation,
$\PP$-a.s. 
\begin{multline*}
        \nabla_x Y(\tau;x,n)[h]= \int_\tau^n( \psi(\bx(\sigma;x),Z(\sigma;x,n))\nabla_x \bx(\sigma;x)\dd \sigma\\
        -\int_\tau^n\lambda \nabla_x Y(\sigma;x,n)[h]\dd \sigma+\int_\tau^n Z(\sigma;x,n)\dd W(\sigma), 
\end{multline*}
   We notice that by the assumption made on $\nabla_x \psi, \nabla_z \psi$ and Proposition \ref{prop:diff-state-eq-gen}, we have
  \begin{align*}
      |\nabla_x \bx(t;x) h| \leq C |h| \qquad and \qquad |\nabla_z \psi(\bx(t;x),Z(t;x))| \leq C.
  \end{align*}
  Therefore by the same arguments based on Girsanov transform as in \cite[Lemma 3.1]{BrHu}, we obtain
  \begin{align*}
     \sup_{t\in [0,n]} |\nabla_x Y(t;x,n)| \leq C|h|, \quad \PP-a.s.
  \end{align*}
  and, again as in the proof of \cite[Lemma 3.1]{BrHu}, applying It\^o
  formula to $e^{-2\lambda t}|\nabla_x Z(t;x,n)|^2$, we get:
  \begin{align*}
      \mathbb{E} \int_0^\infty e^{-2\lambda t}(|\nabla_x Y(t;x,n)h|^2 + 
      |\nabla_x Z(t;x,n)h|^2 ) \dd t < C |h|^2.
  \end{align*}
  Let now $\mathcal{M}^{2,-2\lambda}$ be the Hilbert space of all couples of $(\cF_t)_{t\geq 0}$-adapted processes $(y,z)$, where $y$ has values
  in $\R$ and $z \in \Xi^\star$, such that
  %\begin{align*}
     \begin{align*}
      \mathbb{E} \int_0^\infty e^{-2\lambda t}(|y(t)|^2 + 
      |z(t)|^2 ) \dd t < C |h|^2.
  \end{align*}
  Fix now $x \in X_\eta$ and $h\in X$. Then there exists a subsequence 
  of $$\left\{ (\nabla_x Y(t;x;n)h,\nabla_x Z(t;x,n)h,\nabla_x Y(0;x;n)h)\right\}$$
  such that $\left\{ (\nabla_x Y(t;x;n)h,\nabla_x Z(t;x,n)h)\right\}$ converges weakly to 
  $(U^1(x,h),V^1(x,h))$ in $\mathcal{M}^{2,-2\lambda}$ and $\nabla_x Y(0;x,n)h$
  converges to $\xi(x,h) \in \R$. Proceeding as in \cite[Theorem 3.1]{HuTe/2006}, we see that the convergence of $\nabla_x Y(t;x;n)h$ is, in reality, in $L^2(0;T;\R)$ for all $T>0$ and, moreover, that $\lim_{n\to \infty} \nabla_x Y(0;x;n)h$ exists and coincides with the value at $0$ of the process 
  $>U(0;x)h$ defined by the following equation:
  \begin{align*}
     U(t;x)h = U(0;x)h& - \int_0^t \nabla_x \psi(\bx(s;x),Z(s;x))\nabla_x \bx(s;x) h\dd s \\&+\int_0^t \nabla_z \psi(\bx(s;x),Z(s;x)) V^1(s;x)[h] \dd t\\
      &   -\lambda\int_0^t  U(s;x)h\dd s+ V^1(s;x)[h] \dd W(s). 
  \end{align*} 
  Morever it hold that $U^1(t;x)h=U(t;x)h$ for any fixed $h\in X$.
  Summurizing up, we have that $U(0;x)h= \lim_{n\to \infty}\nabla_x  Y(0;x,n)h$ exists, it is linear and verifies $U(0;x)h\leq C|h|$ for every
  $h$ fixed. Finally, it is continuous in $x$ for every $h$ fixed.  
  Finally, for $t>0$ we have
  \begin{align*}
     &\lim_{t\to 0} \frac{Y(0;x+th)-Y(0;x)}{t} = \lim_{t\to 0} \lim_{n\to \infty}
     \frac{Y(0;x+th,n)-Y(0;x,n)}{t} \\=&  \lim_{t\to 0} \lim_{n\to \infty}
     \int_0^1 \nabla_x Y(0;x+th\theta)h \dd \theta \\= &
     \lim_{t\to o} \int_0^1 Y(0;x+th\theta) \dd \theta = U(0;x)h,
  \end{align*}
  and the claim is proved. 
 % \end{align*}
\end{proof}

%      As in the previous section, 
 Starting from the G\^ateaux derivatives of $Y$ and $Z$, we introduce suitable auxiliary processes which allow ourselves to express $Z$ in terms of $\nabla Y$ and $(I-B)^{1-\theta}$ and then get the fundamental relation for the optimal control problem introduced in Section \ref{sec:intro}. 
 The main point is to prove that the mappings $(h,x) \mapsto\nabla_x Y(t;x) (I-B)^{1-\theta}P g[h]$ and $(h,x) \mapsto \nabla_x Z(t;x) (I-B)^{1-\theta}Pg$
 are well defined as operators from $X \times X$ respectively
 in $L^\infty(\Omega;C([0,T];\R))$ and $L^\infty(\Omega;C([0,T];\Xi^\star))$.
       \begin{prop}\label{prop:est}
          For every $p\geq 2$, $\beta<0$, $x\in X_\eta$, $h\in X$ there exists two processes
        \begin{align*}
                \left\{ \Pi(t;x)[h]: \ t\geq 0\right\} \qquad \textrm{and} \quad \left\{ Q(t;x)[h]: \ t\geq 0\right\} 
         \end{align*}
           with $\Pi(t;x)[h]\in L^p(\Omega;C([0,T];X_\eta))$ and $Q(\cdot;x)[h] \in L^p(\Omega;C([0,T];\Xi^\star))$ for any $T>0$ and such that if $x\in X_\eta$ %and $h\in \cD$, where
    %$$
     %      \cD:=\left\{ h\in X_{1-\theta} \subset:  \ (I-B)^{1-\theta}h \in X_\eta\right\},
    %$$
      then $\PP$-a.s. the following identifications hold:
     \begin{align}
                 &\Pi(t;x)[h]=\begin{cases}
                           \nabla_x Y(t;x)(I-B)^{1-\theta}[h], &  t> 0;\\
                               \nabla_x Y(0;x)(I-B)^{1-\theta}[h], & t=0;
                \end{cases}\label{eq:Pi} \\
               & Q(t;x)[h]=\begin{cases}
                           \nabla_x Z(t;x)(I-B)^{1-\theta}[h], & t> 0;\\
                              0, & t=0;
                \end{cases}\label{eq:Q}
    \end{align}
      Moreover, he map $(x,h) \mapsto \Pi(\cdot;x)[h]$ is continuous from $X_\eta \times X$ to 
        $L^p(\Omega; C_\beta([0,T];\R))$ and the map $(x,h) \mapsto Q(\cdot;x)[h]$ is continuous from $X_\eta \times X$ into $L^p(\Omega; C_\beta([0,T];\Xi^\star)))$ and both maps are linear with respect to $h$. Finally, there exists a positive constant $C$ such that
            \begin{equation}\label{eq:stima-est}
                    \bE  \|\Pi(0;x)\| 
%            e^{p\beta t} + 
%                 \bE \left(\int_0^\infty e^{2\beta r}|\Pi(r;x)[h]|^2 \dd r\right)^{p/2} \\
%                   + \bE \left(\int_0^\infty e^{2\beta r}\|Q(r;x)[h]\|^2_{\Xi^\star}\right)
                 \leq C 
%         |h|^p(1+\|\bx\|_\eta)^p.
            \end{equation}
       \end{prop}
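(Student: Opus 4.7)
The plan is to promote, to the level of derivatives, the finite-horizon approximation scheme already employed in the proof of Theorem \ref{thm:GatY}. For each $n$ the processes $\nabla_x Y(\cdot;x,n)[k]$ and $\nabla_x Z(\cdot;x,n)[k]$ are well-defined for $k\in X_\eta$ and solve a linear BSDE whose driver depends on $\nabla_x\bx(\sigma;x)[k]$. Formally substituting $k=(I-B)^{1-\theta}h$ and calling the resulting processes $\Pi_n(t;x)[h]$ and $Q_n(t;x)[h]$, the driver involves
\[
\nabla_x\psi(\bx(\sigma;x),Z(\sigma;x,n))\,\nabla_x\bx(\sigma;x)(I-B)^{1-\theta}[h].
\]
Thus the problem splits into three tasks: (i) extend $h\mapsto\nabla_x\bx(\sigma;x)(I-B)^{1-\theta}[h]$ to all of $X$; (ii) solve the resulting linear BSDE with uniform bounds in $n$ and in $h\in X$; (iii) pass to the limit $n\to\infty$.

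For (i), the fixed-point argument used in the proof of Proposition \ref{prop:diff-state-eq-gen} carries over: the candidate process solves a mild equation with free term $e^{\sigma B}(I-B)^{1-\theta}h$, and the analyticity of $e^{\sigma B}$ together with the constraints $\theta>\eta$ and $\theta-\eta>\tfrac12$ from \eqref{eq:eta-theta} yield an integrable singularity at $\sigma=0$. A Gronwall-type estimate and the exponential stability of $e^{\sigma B}$ from Proposition \ref{prop:expdec} then produce a uniform-in-$x$ bound $|\nabla_x\bx(\sigma;x)(I-B)^{1-\theta}[h]|_\eta\le C|h|_X$, which is exactly the extension we need.

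For (ii), I would apply a Girsanov change of measure to eliminate the $\nabla_z\psi\cdot Q_n$ term, using the uniform boundedness of $\nabla_z\psi$ from Hypothesis \ref{hp:grad-psi}. Under the new measure $\tP$ one obtains the representation
\[
\Pi_n(\tau;x)[h]=\tilde{\bE}\!\left[\int_\tau^n e^{-\lambda(\sigma-\tau)}\,\nabla_x\psi(\bx(\sigma;x),Z(\sigma;x,n))\,\nabla_x\bx(\sigma;x)(I-B)^{1-\theta}[h]\,\dd\sigma\,\bigg|\,\cF_\tau\right].
\]
The estimate from step (i), combined with the boundedness of $\nabla_x\psi$ and the strict positivity of $\lambda$, yields $\sup_\tau|\Pi_n(\tau;x)[h]|\le C|h|_X$ uniformly in $n$; the It\^o formula applied to $e^{-2\lambda t}|Q_n(t;x)[h]|^2$, exactly as in the proof of Theorem \ref{thm:GatY}, provides the matching $L^2$-bound $\bE\int_0^\infty e^{-2\lambda\sigma}|Q_n(\sigma;x)[h]|^2\,\dd\sigma\le C|h|^2_X$.

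For (iii), mimicking the weak-compactness argument at the end of the proof of Theorem \ref{thm:GatY}, extract a subsequence of $(\Pi_n,Q_n)$ converging weakly in $\mathcal{M}^{2,-2\lambda}$ to the sought pair $(\Pi(\cdot;x)[h],Q(\cdot;x)[h])$; the convergence $Z(\cdot;x,n)\to Z(\cdot;x)$ from Proposition \ref{prop:bsde} and the continuity of $\nabla_x\psi,\nabla_z\psi$ permit passage to the limit in the driver. The identifications \eqref{eq:Pi}--\eqref{eq:Q} hold on the dense subspace where $\nabla_x Y(I-B)^{1-\theta}$ is classically defined, and the uniform bound extends them by density to $h\in X$, giving \eqref{eq:stima-est}; continuity and linearity in $h$ are inherited from the corresponding properties of the $\Pi_n,Q_n$. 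I expect task (ii) to be the main technical obstacle: without the dissipation provided by $\lambda>0$ in the infinite-horizon setting, the uniform control of $\Pi_n$ and $Q_n$ over all of $X$ would be lost, so the interplay between the Girsanov representation and the singularity estimate from (i) is the crux of the argument.
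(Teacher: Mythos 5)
Your proposal follows essentially the same route as the paper: the paper also introduces finite-horizon approximating BSDEs for $(\Pi^{(n)},Q^{(n)})$ identified with $\nabla_x Y(\cdot;x,n)(I-B)^{1-\theta}$ and $\nabla_x Z(\cdot;x,n)(I-B)^{1-\theta}$, obtains uniform bounds from the extension in Proposition \ref{prop:diff-state-eq-gen} together with the boundedness of $\nabla_x\psi$, $\nabla_z\psi$ (via the Girsanov/It\^o arguments of Theorem \ref{thm:GatY} and \cite{HuTe/2006}), and then passes to the limit by weak compactness in $\mathcal{M}^{2,-2\lambda}$ with identification on the dense subspace. The only cosmetic difference is that the paper writes down the extended-driver BSDE directly rather than substituting $k=(I-B)^{1-\theta}h$ into the differentiated finite-horizon equation.
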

       \begin{proof}
            For any $n\in \bN$, we introduce a suitable stochastic differential equation on $[0,n]$ which should give a sequence $(\Pi^{(n)}(\cdot;x)[h],Q^{(n)}(\cdot;x)[h])$ of approximating processes for 
            $(\Pi(\cdot;x),Q(\cdot;x))$; more precisely,
      for fixed $p\geq 2$, $x\in X_\eta$ and $h\in X$ we consider the equation
      \begin{equation}\label{eq:bsdePiQn}
  \begin{aligned}
  % \begin{cases}
       \Pi^{(n)}(t;x)[h] =\int_t^n \nu^{(n)}(t;x) h\dd t +\int_t^n \nabla_z \psi(\bx(t;x),Z(t;x)) Q^{(n)}(t;x)[h] \dd t\\
        \qquad \qquad \qquad   -\lambda\int_t^n  \Pi^{(n)}(t;x)[h]\dd t+ \ Q^{(n)}(t;x)[h] \dd W(t)  \qquad \qquad t\in [0,n]\\
  %    \Pi^{(n)}(n;x)[h]= \eta^{(n)}(x)h, 
   %   \end{cases}
\end{aligned}
\end{equation}
where 
\begin{align*}
     \nu^{(n)}(t;x) h&= {\mathbf 1}_{[0,n]}(t) \nabla_x \psi(\bx(t;x),Z(t;x))\nabla_x \bx(t;x) (I-B)^{1-\theta} P g h.
    % \eta^{(n)}(x) h &= \nabla_x \phi( \bx(n;x))\nabla_x \bx(n;x) h.
\end{align*}
%     with the condition that 
%       \begin{multline}\label{eq:stima-dim}
%                    \bE \sup_{t\geq 0} \|\Pi(t;x)\|^p_\eta e^{p\beta t} + 
%                 \bE \left(\int_0^\infty e^{2\beta r}|\Pi(r;x)[h]|^2 \dd r\right)^{p/2} \\
%                   + \bE \left(\int_0^\infty e^{2\beta r}\|Q(r;x)[h]\|^2_{\Xi^\star}\right)
%                 <\infty. 
%            \end{multline}
  The solution to \eqref{eq:bsdePiQn} exists 
   (see \cite[Proposition 7.5]{CoMa11}) and the maps $x\mapsto \Pi^{(n)}(\cdot;x)[h]$ are continuous from 
    $ X_\eta \times X$ to $L^p(\Omega;C([0,n]; X_\eta)$ and $Q^{(n)}(\cdot;x)[h] \in L^p(\Omega; C([0,n];\Xi^\star))$. Further, if 
    $(Y(\cdot;n,x), Z(\cdot;n,x)$ are the processes introduced in Proposition \ref{prop:bsde}, then 
the following identifications hold
     \begin{align}
                 &\Pi(t;x)^{(n)}[h]=\begin{cases}
                           \nabla_x Y(t;x,n)(I-B)^{1-\theta}[h], &  t\in [0,n];\\
                               \nabla_x Y(0;x,n)(I-B)^{1-\theta}[h], & t=0;
                \end{cases} \label{eq:Pin}\\
               & Q(t;x)^{(n)}[h]=\begin{cases}
                           \nabla_x Z(t;x,n)(I-B)^{1-\theta}[h], & t\in [0,n];\\
                              0, & t=0;
                \end{cases}\label{eq:Qn}
    \end{align}
   for any  $x\in X_\eta$ and $h$ in $X$.
       %    \cD:=\left\{ h\in X_{1-\theta} \subset:  \ (I-B)^{1-\theta}h \in X_\eta\right\}.
  %  $$
  Hence for any $n\in \bN$, $(\Pi^{(n)}(t;x),Q^{(n)}(t;x))_{t\in [0,n]}$ extend the mappings
  \begin{align*}
     h\mapsto  \nabla_x Y(t;x,n)(I-B)^{1-\theta}Pg h \qquad  h\mapsto  \nabla_x Z(t;x,n)(I-B)^{1-\theta}Pg h, \qquad t\in [0,n].
  \end{align*}
  Moreover, 
   we have the estimates
  \begin{multline}\label{eq:stima-dim}
                    \bE \sup_{t\in [0,n]} \|\Pi^{(n)}(t;x)\|^p_\eta e^{-p\beta t} + 
                 \bE \left(\int_0^n e^{-2\beta r}|\Pi^{(n)}(r;x)[h]|^2 \dd r\right)^{p/2} \\
                   + \bE \left(\int_0^n e^{-2\beta r}\|Q^{(n)}(r;x)[h]\|^2_{\Xi^\star} \dd r\right)
                 <\infty,
            \end{multline}
            for suitable $\beta >0$.
  It remains to prove that the processes $(\Pi^{(n)}(\cdot;x),Q^{(n)}(\cdot;x))$
  converge to some pair $(\Pi(\cdot;x),Q(\cdot;x))$ and that the identifications \eqref{eq:Pi} and \eqref{eq:Q} hold. For the convergence of
  $(\Pi^{(n)}(\cdot;x),Q^{(n)}(\cdot;x))$, we notice that $(\Pi^{(n)}(\cdot;x),Q^{(n)}(\cdot;x))$ solves a BSDE whit bounded  coefficients. In fact, by the assumption made on $\nabla_x \psi, \nabla_z \psi$ and Proposition \ref{prop:diff-state-eq-gen}, we have
  \begin{align*}
      |\nu^{(n)}(t;x) h| \leq C |h| \qquad and \qquad |\nabla_z \psi(\bx(t;x),Z(t;x))| \leq C.
  \end{align*} 
  Hence, following \cite[Theorem 3.1]{HuTe/2006} or Theorem \ref{thm:GatY} above, we conclude that $\Pi(0;x)h= \lim_{n\to \infty} \Pi^{(n)}(0;x)h$ exists, it is linear, verifies $|\Pi(0;x)h|\leq C|h|$ for every
  $h$ fixed and it is continuous in $x$ for every $h$ fixed.  
  Finally, since on $X_{\theta-1}$ the processes $(\Pi^{(n)}(\cdot;x),Q^{(n)}(\cdot;x))$ extends the processes
  $$(\nabla_x Y(t;x,n)(I-B)^{1-\theta}[h],\nabla_x Z(t;x,n)(I-B)^{1-\theta}[h]),$$ on the same space we have
  \begin{align*}
       \Pi(0;x)h = \lim_{n\to \infty} \nabla_x Y(0;x,n)(I-B)^{1-\theta}Pg[h]
       = \nabla_x Y(0;x)(I-B)^{1-\theta}[h]
  \end{align*}
%    Now by hypothesis {\bf on psi \ref{hp:}}, we have that for all $x,h\in X$ the following holds $\PP$-a.s. for all $n\in [0,n]$ and $t\in[0,n]$:
%\begin{gather*}
%     \left| \nabla_x \psi(\bx(t;x),Z(t;x) \nabla_x \bx(t;x)h\right| \leq |h|
%     \qquad \| \nabla_z \psi(\bx(t;x),Z(t;x)\|_{\Xi} \leq c.
%\end{gather*}
%    Therefore by some arguments based on Girsanov transform as in \cite[Lemma 3.1]{BrHu98}, we obtain
%\begin{align*}
%     \sup_{t\in [0,n]} |\Pi^{(n)}(t;x)| \leq C |h|, \qquad \PP-a.s.;
%\end{align*}
%and, again as in the proof of Lemma 3.1 in \cite{BrHu98}, applying It\^o's formula to
%$e^{-2\lambda t}|\Pi^{(n)}(t;x)h|^2$ we get:
%\begin{align*}
%        \mathbb{E}\int_0^\infty e^{-2\lambda t}\left( |\Pi^{(n)}(t;x) h|^2+ |Q(t;x)|^2\right) \dd t \leq C |h|^2.
%\end{align*}
%    We notice that the process $\nu^{(n)}(\cdot;x)h$ is well-defined for every $h\in X$. Moreover, recalling the estimate on $\bx$ obtained in Proposition \ref{prop:ex-un}, the properties of $\nabla_x \psi(x,z)$ and the bound on $\nabla_x \bx(\cdot;x)h$ proved in \ref{prop:theta}, we get
%     \begin{align*}
%       & \bE \left( \int_0^\infty e^{2\beta t}|\nu(t;x)h|^2\dd t\right)^{p/2} \\
%                \leq & C_1 \bE \left( \int_0^\infty (1+\|\bx(t;x)\|_\eta)^2( \|\Theta(t;x)h\|_\eta
%         +t^{(\theta-1-\eta)}|h|_X)^2 \dd t\right)^{p/2}\\
%       & C_2(1+\|x\|_\eta)^p \int_0^\infty 
%     \end{align*}

      \end{proof}
       
        \begin{cor}\label{cor:Y-v}
        Setting $v(x)=Y(0;x)$, we have that $v$ is G\^ateaux differentiable with respect to 
      $x$ on $X_\eta$ and the map $(x,h) \mapsto \nabla v(x)[h]$ is continuous. 
     
      Moreover, for $x\in X_\eta$ the linear operator $h\mapsto \nabla v(x)(I-B)^{1-\theta}[h]$
     - a priori defined for $h\in \cD$ - has an extension to a bounded linear operator from $X$ into $\R$, that we denote by $[\nabla v (I-B)^{1-\theta}](x)$.

      Finally, the map $(x,h) \mapsto [\nabla v (I-B)^{1-\theta}Pg](x)$ is continuous as a mapping from $X_\eta \times X$ into $\R$ and there exists $C>0$ such that 
      \begin{align}\label{eq:pol-growth}
          |[\nabla v(I-B)^{1-\theta}](x)[h]|_X \leq C|h|_X,
     \end{align} 
      for $x\in X_\eta, h\in X$.
    \end{cor}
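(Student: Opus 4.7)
The plan is to assemble the corollary directly from Theorem \ref{thm:GatY} and Proposition \ref{prop:est}, both of which do the real work. The Gâteaux differentiability of $v(x) = Y(0;x)$ on $X_\eta$, together with the a priori bound $|\nabla v(x)| \leq c$, is exactly the content of Theorem \ref{thm:GatY}. To upgrade this to joint continuity of $(x,h) \mapsto \nabla v(x)[h]$, I would pass to the limit in the approximating derivatives $\nabla_x Y(0;x,n)[h]$: by \cite[Proposition 5.2]{FuTe/2002}, each $x \mapsto \nabla_x Y(0;x,n)$ is strongly continuous, and the proof of Theorem \ref{thm:GatY} shows that $\nabla_x Y(0;x,n)[h] \to \nabla v(x)[h]$ uniformly on bounded sets of directions $h$ (via Girsanov and the uniform bound $|\nabla_x Y(\cdot;x,n)h| \leq C|h|$). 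Continuity of the limit map then follows from the standard fact that a uniform limit of continuous maps is continuous.

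Next, for the extension property, the plan is to \emph{define}
\[
  [\nabla v \,(I-B)^{1-\theta}](x)[h] := \Pi(0;x)[h], \qquad x\in X_\eta,\ h\in X,
\]
where $\Pi(0;x)[h]$ is the process constructed in Proposition \ref{prop:est}. By the identification \eqref{eq:Pi}, for every $h$ lying in the natural dense subspace $\mathcal{D} \subset X$ on which $(I-B)^{1-\theta}$ is defined (e.g.\ $h \in X_{1-\theta}$), one has $\Pi(0;x)[h] = \nabla_x Y(0;x)(I-B)^{1-\theta}[h] = \nabla v(x)(I-B)^{1-\theta}[h]$. Thus $\Pi(0;x)$ is a genuine extension of the a priori only densely-defined operator $h \mapsto \nabla v(x)(I-B)^{1-\theta}[h]$ to all of $X$. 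Linearity in $h$ is preserved by the limit and the extension is unique because $\mathcal{D}$ is dense in $X$.

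The bound \eqref{eq:pol-growth} then comes directly from estimate \eqref{eq:stima-est} in Proposition \ref{prop:est}, applied at $t=0$: $|\Pi(0;x)[h]| \leq C|h|_X$, uniformly in $x\in X_\eta$. Finally, the joint continuity of $(x,h) \mapsto [\nabla v(I-B)^{1-\theta}Pg](x)[h]$ from $X_\eta \times X$ to $\mathbb{R}$ is read off from the continuity statement in Proposition \ref{prop:est}, which asserts that $(x,h) \mapsto \Pi(\cdot;x)[h]$ is continuous as a map into $L^p(\Omega;C_\beta([0,T];\mathbb{R}))$; evaluating the sample continuous process at $t=0$ (after taking expectation, using boundedness) yields the desired continuity into $\mathbb{R}$.

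The only delicate point, which I would want to verify carefully, is the identification in the second paragraph: one must check that the specific dense subspace on which the two objects are declared equal is big enough for the extension to be unique, and that the approximating identifications \eqref{eq:Pin}, together with the weak limits taken in the proof of Proposition \ref{prop:est}, really pass through to yield \eqref{eq:Pi} in the limit $n\to\infty$. Apart from this bookkeeping, the corollary is a direct consequence of the two preceding results and requires no new estimates.
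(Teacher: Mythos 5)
Your proposal is correct and follows essentially the same route as the paper: the differentiability of $v$ is read off from Theorem \ref{thm:GatY}, and the extension, the bound \eqref{eq:pol-growth} and the joint continuity are obtained by identifying $[\nabla v\,(I-B)^{1-\theta}](x)[h]$ with $\Pi(0;x)[h]$ from Proposition \ref{prop:est}. The extra care you take about the density/identification bookkeeping is consistent with what the paper's proof implicitly relies on, so no further work is needed.
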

     
     \begin{proof}
          First of all, we notice that $Y(0;x)$ is deterministic, since $(\cF_t)_{t\geq 0}$ is generated 
       by the Wiener process $W$ and $Y(\tau)$ is $\cF_\tau$-adapted.
         
         Moreover, in Theorem \ref{thm:GatY} we proved that the map $x\mapsto Y(\cdot;x)$ is continuous and G\^ateaux differentiable with values in $L^p(\Omega;C([0,T];\R))$; consequently, it follows that $x\mapsto v(x)=Y(0;x)$ is G\^ateaux differentiable with values in 
     $\R$.
         
      Next, we notice that $\Pi(0;x)=\nabla_x Y(0;x)(I-B)^{1-\theta}[h]$. The existence of the required extension and its continuity are direct consequence of Proposition \ref{prop:est} and estimate \eqref{eq:pol-growth} follows from \eqref{eq:stima-est}.
        
     \end{proof}
     We are now in the position to give a meaning to the expression $\nabla_x Y(t;x)(I-B)^{1-\theta}$ and, successively, to identify it with the process $Z(t;x)$. To this end we quote from \cite{CoMa11} a preliminary result where
     we investigate the existence of the joint quadratic variation of $W$ with a process of the form $\left\{w(t,\bx(t)): \ t\in [0,T]\right\}$ for a given function $w: [0,T]\times X \to \R$, on an interval $[0,s] \subset [0,T)$. In order to simplify the exposition we omit the proof. We only notice
     that the proof requires the study of the Malliavin derivative of $\bx$ and this can be done in the same way as in \cite[Section 6.1]{CoMa11}.
     \begin{prop}\label{prop:quad-var}
Suppose that $w\in C([0,T)\times X_\eta;\R)$ is G\^ateaux differentiable with respect to $\bx$,
and that for every $s<T$ there exist constant $K$ (possibly depending on $s$)
such that 
\begin{align}\label{eq:stima-w}
     |w(t,x)|\leq K, \quad |\nabla w(t,x)|\leq K, \quad t\in [0,s], x\in X.
\end{align}
Let $\eta$ and $\theta$ satisfy condition \eqref{eq:eta-theta} in Theorem \ref{t:state space setting}. 
Assume that for every $t\in [0,T)$, $x\in X_\eta$ the linear operator $k\mapsto \nabla w(t,x)(I-B)^{1-\theta}k$ (a priori defined for $k\in \mathcal{D}$) has an extension to a bounded linear operator $X \to \R$, that we denote by $[\nabla w(I-B)^{1-\theta}](t,x)$. Moreover, assume that the map
$(t,x,k) \mapsto [\nabla w(I-B)^{1-\theta}](t,x)k$ is continuous from $[0,T) \times X_\eta \times X $ into $\R$. 
For $t\in [0,T),x\in X_\eta$, let $\left\{ \bx(t;s,x),t\in[s,T]\right\}$ be the solution of equation 
\eqref{eq:un-state-eq}. Then the process $\left\{w(t,\bx(t;s,x)), t\in [s,T]\right\}$ admits a joint quadratic variation process with $W^j$, for every $j\in \bN$, on every interval $[s,t] \subset [s,T)$, given by
\begin{align*}
     \int_s^t [\nabla w(I-B)^{1-\theta}](r,\bx(r;s,x))(I-B)^{\theta} P g e_j\dd r.
\end{align*}
\end{prop}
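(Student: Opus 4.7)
The approach will be Malliavin calculus combined with a regularization of $w$. Since $w$ is only G\^ateaux differentiable and the composition $\nabla w\cdot (I-B)Pg$ is not \emph{a priori} meaningful (the unbounded operator $(I-B)Pg$ takes values outside $X$), It\^o's formula cannot be applied directly to $w(t,\bx(t;s,x))$. The plan is to smooth $w$ so that the composition becomes a genuine semimartingale, compute the joint quadratic variation in the regular setting via It\^o, and then pass to the limit exploiting the continuous extension $[\nabla w (I-B)^{1-\theta}]$ guaranteed by the hypothesis.

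\textbf{Step 1: Malliavin derivative of $\bx$.} Following \cite[Section 6.1]{CoMa11}, we first establish that for $s \le r \le t$ the random variable $\bx(t;s,x)$ is Malliavin differentiable with directional derivative in direction $e_j$ given formally by $\nabla_x \bx(t;r,\bx(r;s,x))(I-B)P g\, e_j$. Since $P g\, e_j \in X_\theta$, we have $(I-B)^{\theta} Pg\, e_j \in X$, and the meaningful interpretation is via the factorization
\begin{align*}
D^j_r \bx(t;s,x) = \bigl[\nabla_x \bx(t;r,\bx(r;s,x))(I-B)^{1-\theta}\bigr]\,(I-B)^{\theta} Pg\, e_j,
\end{align*}
the bracketed operator being the bounded extension furnished by Proposition \ref{prop:diff-state-eq-gen}. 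This identity is obtained by differentiating the mild form of \eqref{eq:un-state-eq} in the direction of the noise, mimicking the analysis of $\nabla_x \bx$ carried out in Section \ref{sec:forward}.

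\textbf{Step 2: Smoothing and passage to the limit.} Next, we regularize $w$ to obtain $C^1$ functions $w_n:[0,T)\times X\to\R$ uniformly bounded by $K$, with uniformly bounded Fr\'echet derivative, and such that $w_n(t,x)\to w(t,x)$, $\nabla w_n(t,x)k\to \nabla w(t,x)k$ pointwise, together with the crucial pointwise convergence $[\nabla w_n (I-B)^{1-\theta}](t,x)k\to [\nabla w (I-B)^{1-\theta}](t,x)k$; the continuity hypothesis on the target map from $[0,T)\times X_\eta \times X$ into $\R$ is what makes such a construction possible. For each $w_n$ the standard It\^o calculus applies and gives a semimartingale decomposition of $w_n(t,\bx(t;s,x))$ whose martingale part has integrand $\nabla w_n(r,\bx(r;s,x))(I-B)Pg$, whence
\begin{align*}
[w_n(\cdot,\bx(\cdot;s,x)),W^j]_{[s,t]} = \int_s^t \bigl[\nabla w_n (I-B)^{1-\theta}\bigr](r,\bx(r;s,x))(I-B)^{\theta} Pg\, e_j\, \dd r.
\end{align*}
Letting $n\to \infty$, the right-hand side converges to the claimed integral by dominated convergence, using the uniform bound \eqref{eq:stima-w} on the extensions and the continuity of $r\mapsto \bx(r;s,x)$ in $X_\eta$. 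On the left, ucp convergence of $w_n(\cdot,\bx(\cdot;s,x))$ to $w(\cdot,\bx(\cdot;s,x))$ combined with stability of the Russo--Vallois joint quadratic variation under such convergence identifies the limit with $[w(\cdot,\bx(\cdot;s,x)),W^j]_{[s,t]}$.

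\textbf{Main obstacle.} The delicate point is the regularization in Step 2: one must construct approximants $w_n$ whose extensions $[\nabla w_n (I-B)^{1-\theta}]$ actually converge to $[\nabla w (I-B)^{1-\theta}]$ in a sense strong enough to justify the limit. A naive mollification on $X$ need not be compatible with the unbounded composition $(I-B)^{1-\theta}$, so the smoothing has to respect the interpolation scale; this is precisely the technical input imported from \cite[Section 6.1]{CoMa11}.
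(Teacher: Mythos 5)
Your Step 1 is in the spirit of what the paper intends: the paper omits the proof and points to \cite[Section 6.1]{CoMa11}, where the argument is run entirely through Malliavin calculus — one shows that $\bx(t;s,x)$ (and hence $w(t,\bx(t;s,x))$, via the chain rule and the bounded extension of Proposition \ref{prop:diff-state-eq-gen}) belongs to a suitable Malliavin--Sobolev class, and then invokes a general criterion (going back to Fuhrman and Tessitore) that identifies the joint quadratic variation with $W^j$ as the time integral of the diagonal limit of the Malliavin derivative. No It\^o formula and no smoothing of $w$ enter that route; the whole point of the Malliavin criterion is to bypass both.

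Your Step 2, by contrast, contains genuine gaps. First, even for a Fr\'echet-smooth $w_n$ you cannot ``apply standard It\^o calculus'' to $w_n(t,\bx(t;s,x))$: the process $\bx$ is only a mild solution whose drift and diffusion coefficients $(I-B)Pf$ and $(I-B)Pg$ live in the extrapolation space, so $\bx$ is not an It\^o semimartingale in $X$ and the decomposition you write down (with integrand $\nabla w_n(I-B)Pg$) is not available without a further approximation layer (e.g.\ Yosida-type regularization of $B$), after which the interchange of the two limits becomes the real difficulty — precisely the difficulty the Malliavin approach is designed to avoid. Second, the passage to the limit on the left-hand side is unjustified: the Russo--Vallois joint quadratic variation is \emph{not} stable under ucp convergence of one factor, so ``$w_n(\cdot,\bx)\to w(\cdot,\bx)$ ucp'' does not identify the limit of the covariations; one needs uniform control of the regularized covariation integrals, which is exactly the quantitative input the Malliavin criterion supplies. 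Third, the construction of approximants $w_n$ whose extensions $[\nabla w_n(I-B)^{1-\theta}]$ converge to $[\nabla w(I-B)^{1-\theta}]$ is asserted rather than carried out, and it cannot be ``imported'' from \cite{CoMa11}, since that reference does not perform any such smoothing. As it stands, the proposal defers its central difficulty to a source that resolves the problem by a different method; to repair it you should either carry out the mollification-plus-Yosida scheme in full (including the stability of the covariation under the double limit) or switch to the Malliavin-derivative criterion the paper actually has in mind.
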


    the above result allows to identify the process $Z(\cdot;x)$ with $[\nabla v (I-B)^{1-\theta}](\bx(\cdot;x))(I-B)^\theta P g$, as we can see in the result below.
      \begin{cor}\label{cor:quad-var}
     For every $x\in X_\eta$ we have, $\PP$-a.s.
     \begin{align}
          & Y(t;x)=v(\bx(t;x)), \qquad for \ all \ t\geq 0;\label{eq:Y-v}\\
           & Z(\cdot,x)=[\nabla v (I-B)^{1-\theta}](\bx(\cdot;x))(I-B)^\theta P g(\bx(\cdot;x)), \ 
         for \ almost \ all \ t \geq 0. \label{eq:Z-v}
    \end{align}
     \end{cor}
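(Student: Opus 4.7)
The plan is to handle the two identifications separately: \eqref{eq:Y-v} follows from a flow-type argument combined with uniqueness of the BSDE on the infinite horizon, and \eqref{eq:Z-v} is obtained by computing the joint quadratic variation of $Y(\cdot;x)$ with $W$ in two different ways and equating the integrands.

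For \eqref{eq:Y-v}, I would exploit the time-homogeneity of the forward equation \eqref{eq:un-state-eq}. Fix $t\geq 0$ and $x\in X_\eta$; by the Markov property, $\bx(t+s;x)=\bx(s;\bx(t;x))$ for $s\geq 0$ in a suitable $\PP$-a.s. sense. Consequently, the pair of shifted processes $(Y(t+\cdot;x),Z(t+\cdot;x))$, with respect to the shifted Brownian motion $\widetilde W(s):=W(t+s)-W(t)$ and the shifted filtration, solves the same infinite horizon BSDE \eqref{eq:bsde2} driven by $\bx(\cdot;\bx(t;x))$. On the other hand, by definition, $(Y(\cdot;\bx(t;x)),Z(\cdot;\bx(t;x)))$ also solves this equation. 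Since both solutions belong to the class in which uniqueness is established in Proposition \ref{prop:bsde} (namely $Y$ continuous and uniformly bounded, $Z$ locally square integrable), they coincide; evaluating at $s=0$ gives $Y(t;x)=Y(0;\bx(t;x))=v(\bx(t;x))$.

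For \eqref{eq:Z-v}, I would apply Proposition \ref{prop:quad-var} to $w(t,x):=v(x)$. The required hypotheses have already been established: by Corollary \ref{cor:Y-v}, $v$ is G\^ateaux differentiable with $(x,h)\mapsto\nabla v(x)h$ continuous and the extension $[\nabla v(I-B)^{1-\theta}](x)$ exists as a bounded linear map on $X$ that is continuous in $x$, while the uniform bounds $|v|\leq c$ and $|\nabla v|\leq c$ come from Theorem \ref{thm:GatY}. Hence, for every $j\in\bN$ and every $0\leq s\leq t$,
\begin{equation*}
  [v(\bx(\cdot;x)),W^j]_s^t=\int_s^t [\nabla v(I-B)^{1-\theta}](\bx(r;x))(I-B)^{\theta}Pg\, e_j\,\dd r.
\end{equation*}
On the other hand, by \eqref{eq:Y-v}, $v(\bx(\cdot;x))=Y(\cdot;x)$, and the BSDE \eqref{eq:bsde2} exhibits the semimartingale decomposition
\begin{equation*}
  Y(t;x)-Y(s;x)=\int_s^t[\lambda Y(r;x)-\psi(\bx(r;x),Z(r;x))]\,\dd r+\int_s^t Z(r;x)\,\dd W(r),
\end{equation*}
whose joint quadratic variation with $W^j$ is $\int_s^t Z(r;x)e_j\,\dd r$. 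Identifying the two expressions for arbitrary $0\leq s\leq t$ and $j\in\bN$, and using the arbitrariness of $j$ (equivalently, the fact that $\{e_j\}$ is a complete orthonormal basis of $\Xi$), we conclude that $Z(r;x)=[\nabla v(I-B)^{1-\theta}](\bx(r;x))(I-B)^{\theta}Pg$ for almost every $r\geq 0$, $\PP$-a.s.

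The main delicate point is the first step: one must verify carefully that the shifted BSDE falls inside the uniqueness class of Proposition \ref{prop:bsde}, so that two solutions of the same equation can actually be identified pathwise. Once this flow identity is in hand, the quadratic-variation argument for \eqref{eq:Z-v} proceeds cleanly, with all the hypotheses of Proposition \ref{prop:quad-var} already secured by Theorem \ref{thm:GatY} and Corollary \ref{cor:Y-v}.
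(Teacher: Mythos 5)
Your argument is correct and essentially the paper's own proof: \eqref{eq:Y-v} is obtained from the flow property of the forward equation, time-homogeneity and uniqueness of the infinite-horizon BSDE (the paper formalizes the shift by introducing the two-parameter family $Y(t;s,x)$, using equality in law and the fact that $Y(t;t,x)$ is deterministic, which is the same device you invoke via the shifted Wiener process), and \eqref{eq:Z-v} is obtained exactly as you do, by computing the joint quadratic variation of $v(\bx(\cdot;x))=Y(\cdot;x)$ with $W$ once from the BSDE and once from Proposition \ref{prop:quad-var}, whose hypotheses are supplied by Theorem \ref{thm:GatY} and Corollary \ref{cor:Y-v}.
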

    \begin{proof}
         We need to consider the equation (which is slightly more general than \eqref{eq:state-eq})
        \begin{align}\label{eq:forw-est}
             \begin{cases}
    \dd \bx(t)= B \bx(t)\dd t+ (I-B)P \, f( J \bx(t))\dd t \\
       \qquad \qquad \qquad + (I-B)P g(u(t)) (r(J \bx(t),\gamma(t))\dd t+\dd W(t))\\
     \bx(s)=x,
   \end{cases}
        \end{align}
        for $t$ varying $ [s,\infty)$. We set
            $\bx(t)=x$ for $t\in [0,s)$ and we denote by $(\bx(t;s,x))_{t\geq 0}$ the solution, to indicate 
            dependence on $x$ and $s$. By an obvious extension of the results concerning the BSDE \eqref{eq:bsde2} stated above, we can solve the backward equation \eqref{eq:bsde2} with 
             $\bx$ given by \eqref{eq:forw-est}; we denote the corresponding solution
              by $(Y(t;s,x),Z(t;s,x))$ for $t\geq 0$. Thus $(Y(t;x),Z(t;x))$ coincides with 
             $(Y(t;0,x),Z(t;0,x))$ that occurs in the statement of Proposition \ref{prop:bsde}. 

             Now let us prove \eqref{eq:Y-v}. 
            We start from the well-known equality: for $0\leq r \leq T$, $\PP$-a.s.
            \begin{align*}
                  \bx(t;s,x)=\bx(t;r,\bx(r;s,x)), \qquad \textrm{for all } t\in [s,T].
           \end{align*}
          It follows easily from the uniqueness of the backward equation \eqref{eq:bsde2}
           that $\PP$-a.s.
          \begin{align*}
                 Y(t;s,x)=Y(t;r,\bx(r;s,x)), \qquad \textrm{for all }\ t\in [s,T].
          \end{align*}
             In particular, for every $0\leq r \leq s \leq t $,
            \begin{align*}
                  &Y(t;s,\bx(s;r,x))=Y(t;r,x), \qquad \textrm{for } t\in [s,\infty),\\
                  &Z(t;s,\bx(s;r,x))=Z(t;r,x), \qquad \textrm{for } t\in [s,\infty).
           \end{align*}
         Since the coefficients of \eqref{eq:forw-est} do not depend on time, we have
         \begin{align*}
                \bx(\cdot;0,x)\stackrel{(d)}{=}\bx(\cdot + t;t,x), \qquad t\geq 0,
        \end{align*}
       where $\stackrel{(d)}{=} $ denotes equality in distribution. As a consequence
          we obtain $$
        (Y(\cdot;0,x),Z(\cdot;0,x))\stackrel{(d)}{=} (Y(\cdot+t;t,x),Z(\cdot+t;t,x)), \qquad t\geq 0,
           $$
         where both sides  of the equality are viewed as random elements with values in the space
          $C(\R_+;\R)\times L^2_{loc}(\R_+;\Xi^\star)$. In particular
         $$
                   Y(0;0,x)\stackrel{(d)}{=}Y(t;t,x),
         $$
        and since they are both deterministic, we have
         $$
                   Y(0;0,x)=Y(t;t,x), \qquad x\in X_\eta, t\geq 0
         $$
          so that we arrive at \eqref{eq:Y-v}.
  
            To prove \eqref{eq:Z-v} we consider the joint quadratic variation of $(Y(t;x))_{t\in [0,T]}$
         and $W$ on an arbitrary interval $[0,t] \subset [0,T)$; from the backward equation 
          \eqref{eq:bsde2} we deduce that it is equal to $\int_0^t Z(r;x)\dd r$. On the other side, 
          th same result can be obtained by considering the joint quadratic variation of $(v(\bx(t;x)))_{t\in [0,T]}$ and $W$. Now by an application of Proposition \ref{prop:quad-var}
      (whose assumptions hold true by Corollary \ref{cor:Y-v}) leads to the identity
          \begin{align*}
                    \int_0^t Z(r;x)\dd r = \int_0^t [\nabla v (I-B)^{1-\theta}](r;\bx(r;x))(I-B)^\theta P g(\bx(r;x))\dd r,
          \end{align*}
         and \eqref{eq:Z-v} is proved.
    \end{proof}

\section{The stationary Hamilton Jacobi Bellman equation}\label{sec:HJB}

Now we proceed as in \cite{FuTe/2004}. Let us consider again the solution $\bx(t;x)$ of equation \eqref{eq:state-eq-2} and denote by 
$(P_{t})_{t\geq 0}$ its transition semigroup:
$$
      P_{t}[h](x)=\bE h(\bx(t;x)), \qquad x\in X_\eta, 0 \leq t,
$$
for any bounded measurable $h: X \to \R$. We notice that by the bound \eqref{eq:exuni} this formula is true for every $h$ with polynomial growth. In the following $P_{t}$ will be considered as an operator acting on this class of functions.

Let us denote by $\cL$ the generator of $P_{t}$:
\begin{align*}
     \cL [h](x)=\frac{1}{2}{\rm Tr} [(I-B)Pg\nabla^2h(x)g^*p^*(I-B)^*]
    + \langle Bx+(I-B)P f(Jx),\nabla h(x)\rangle,
\end{align*}
where $\nabla h$ and $\nabla^2 h$ are first and second G\^ateaux derivatives 
of $h$ at the point $x\in X$ (here we are identified with elements of $X$ and $L(X)$ respectively).
This definition is formal, since it involves the terms $(I-B)Pg$ and $(I-B)P f$ which - {\emph a priori} - are not defined as elements of $L(X)$ and the domain of $\cL$ is not specified.

In this section we address solvability of the nonlinear stationary Kolmogorov equation:
\begin{align}\label{eq:HJB1}\tag{$HJB$}
   % \begin{cases}
        \cL[v(\cdot)](x)=\lambda v(x)- \psi(x,\nabla v(x)(I-B)Pg), \quad  \ x\in X.
    %\end{cases}
\end{align}
This is a nonlinear elliptic equation for the unknown function $v: X_\eta \to \R$. 
We define the notion of solution of the \eqref{eq:HJB} by means of the variation of constant formula:
\begin{defn}\label{def:solHJB}
    We say that a function $v: X_\eta \to \R$ is a mild solution of the Hamilton - Jacobi - Bellman equation \eqref{eq:HJB} if the following conditions hold:
\begin{enumerate}
    \item $v\in C( X;\R)$ and there exist $C\geq 0$ such that 
    $|v(x)|\leq C, \ x\in X$;
    \item $v$ is G\^ateaux differentiable with respect to $x$ on $ X_\eta$
     and the map $(x,h)\mapsto \nabla v(x)[h]$ is continuous $X_\eta \times X \to \R$;
    \item \label{it:boundv} For all $x\in X_\eta$ the linear operator $k \mapsto \nabla v(x)(I-B)^{1-\theta} k$ (a priori defined for $k\in \mathcal{D})$ has an extension to a bounded linear operator on $X \to \R$, that we denote by $[\nabla v(I-B)^{1-\theta}](x)$.
   
Moreover the map $(x,k)\mapsto [\nabla v (I-B)^{1-\theta}](x)k$ is continuous $ X_\eta \times X \to \R$ and there exist constants $C\geq 0$ such that
\begin{align*}
   \| [\nabla v (I-B)^{1-\theta}](x)\|_{L(X)} \leq C, \qquad x\in X_\eta.
\end{align*}
  \item The following equality holds for every $x\in X_\eta$:
    \begin{align}\label{eq:solvar}
         v(x)=e^{-\lambda T}P_{T}[v](x)-\int_0^T e^{-\lambda s}P_{s} \left[ \psi(  [\nabla v (I-B)^{1-\theta}](\cdot)(I-B)^\theta P g\right])(x) \dd s.
    \end{align}
\end{enumerate} 
\end{defn}

\begin{rem}
    We notice that, by assumption, $|\psi(x,z)|\leq C$; moreover, we saw in the preceeding sections how to give a meaning to the term
    \begin{align*}
       \nabla v(x) (I-B) P g.
    \end{align*} 
 Hence if $v$ is a function satisfying the bound required in Definition \ref{def:solHJB},\ref{it:boundv}
  we have
\begin{align*}
     \left| \psi(x,[\nabla v (I-B)^{1-\theta}](x)(I-B)^{\theta} P g )\right|\leq C
\end{align*}
and formula \eqref{eq:solvar} is meaningful.
\end{rem}

Now we are ready to prove that the solution of the equation \eqref{eq:HJB} can be defined by means of the solution of the BSDE associated with the control problem \eqref{eq:state-eq-2}.
\begin{thm}\label{thm:ident}
    Assume Hypotheses \ref{hp:a,A,f,g,r,W}, \ref{hp:dato-in}, \ref{hp:l} and \ref{hp:grad-psi}; then there exists a unique mild solution of the \ref{eq:HJB} equation. The solution is given by the formula
\begin{align}\label{eq:defv}
     v(x)=Y(0;0,x)=Y(t;t,x), \qquad t\geq 0,
\end{align} 
where $(\bx,Y,Z)$ is the solution of the forward-backward system \eqref{eq:state-eq-2} and \eqref{eq:bsde2}. 
\end{thm}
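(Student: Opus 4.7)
The proof naturally decomposes into existence and uniqueness. Existence is essentially a repackaging of the probabilistic material already constructed; uniqueness requires the converse identification via the Markov property and a surrogate It\^o formula.

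\textit{Existence.} I set $v(x):=Y(0;0,x)$. Corollary \ref{cor:Y-v} already furnishes continuity and G\^ateaux differentiability of $v$, the continuous extension $[\nabla v(I-B)^{1-\theta}](x)$ with the required boundedness, and (together with the uniform bound $|Y(0;x)|\leq M/\lambda$ from Proposition \ref{prop:bsde}) conditions (1)--(3) of Definition \ref{def:solHJB}. To verify the mild formula \eqref{eq:solvar}, the key input is Corollary \ref{cor:quad-var}, which gives, $\PP$--a.s.,
\begin{align*}
Y(t;x)=v(\bx(t;x)),\qquad Z(t;x)=[\nabla v(I-B)^{1-\theta}](\bx(t;x))(I-B)^\theta Pg.
\end{align*}
I then apply It\^o's formula to $e^{-\lambda t}Y(t;x)$ on $[0,T]$; the BSDE \eqref{eq:bsde2} is designed so that the $\lambda Y$ drift cancels, leaving only a $\psi$-term and a stochastic integral. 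Taking expectations (the stochastic integral has zero mean thanks to $\bE\int_0^T e^{-2\lambda s}|Z(s;x)|^2\,\dd s<\infty$ from Proposition \ref{prop:bsde}) and rewriting $\bE[h(\bx(s;x))]=P_s[h](x)$ via the Markov property yields exactly \eqref{eq:solvar}.

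\textit{Uniqueness.} Let $\tilde v$ be any mild solution and define $\tilde Y(t):=\tilde v(\bx(t;x))$, a continuous uniformly bounded adapted process, together with
\begin{align*}
\tilde Z(r):=[\nabla \tilde v(I-B)^{1-\theta}](\bx(r;x))(I-B)^\theta Pg,
\end{align*}
which is well-defined and bounded by condition (3) of Definition \ref{def:solHJB} and Proposition \ref{prop:diff-state-eq-gen}. Applying the mild formula for $\tilde v$ at the Markovian starting point $\bx(t;x)$ with horizon $T-t$ and invoking the tower property, one checks that $e^{-\lambda t}\tilde Y(t)-\int_0^t e^{-\lambda r}\psi(\bx(r;x),\tilde Z(r))\,\dd r$ is an $\cF_t$-martingale on $[0,T]$. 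Brownian martingale representation gives a predictable $\tilde{\mathcal Z}$ with this martingale equal to $\tilde v(x)+\int_0^{\cdot}\tilde{\mathcal Z}(s)\,\dd W(s)$; computing the joint quadratic variation of $\tilde Y$ with each $W^j$ on both sides and invoking Proposition \ref{prop:quad-var} forces $\tilde{\mathcal Z}(s)=e^{-\lambda s}\tilde Z(s)$. Undoing the change of variable $e^{-\lambda t}$ recovers precisely the BSDE \eqref{eq:bsde2} for $(\tilde Y,\tilde Z)$ within the uniqueness class of Proposition \ref{prop:bsde}. Hence $(\tilde Y,\tilde Z)=(Y,Z)$ and in particular $\tilde v(x)=\tilde Y(0)=Y(0;x)=v(x)$.

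\textit{Principal obstacle.} The substantive difficulty, shared by both parts, is that the noise coefficient $(I-B)Pg$ is unbounded on $X$, so the classical infinite-dimensional It\^o formula does not apply verbatim to $v(\bx(t;x))$ or $\tilde v(\bx(t;x))$. The combination of the extension $[\nabla v(I-B)^{1-\theta}]$ provided by Corollary \ref{cor:Y-v} with the joint quadratic-variation identity of Proposition \ref{prop:quad-var} is precisely the surrogate It\^o-type calculus that enables the identification of $Z$ (and $\tilde Z$) and thereby closes both existence and uniqueness; without these two ingredients the nonlinear term $\psi(x,\nabla v\,(I-B)Pg)$ in \eqref{eq:HJB1} would have no rigorous meaning.
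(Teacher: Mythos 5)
Your proposal is correct and follows essentially the same route as the paper: existence by setting $v(x)=Y(0;0,x)$, using Corollary \ref{cor:Y-v}/\ref{cor:quad-var} for the regularity and the identifications $Y=v(\bx)$, $Z=[\nabla v(I-B)^{1-\theta}](\bx)(I-B)^\theta Pg$, then It\^o on $e^{-\lambda t}Y$ plus the Markov property to obtain \eqref{eq:solvar}; uniqueness by evaluating the mild formula along $\bx(t;0,x)$, extracting a martingale, invoking martingale representation and the joint quadratic variation of Proposition \ref{prop:quad-var} to identify the integrand, and concluding via uniqueness of the BSDE \eqref{eq:bsde2}. This matches the paper's argument in both structure and the key lemmas used.
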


\begin{proof}
     We start by proving existence. In particular, we prove that $v$, given by 
\eqref{eq:defv}, is a solution of \ref{eq:HJB}. Hence, we set: 
\begin{align*}
   v(x):=Y(0;0,x).
\end{align*}
By Corollary \ref{cor:quad-var} the function $v$ has the regularity properties stated in Definition \ref{def:solHJB}. In order to verify that equality \eqref{eq:solvar} holds we first fix $x\in X_\eta$. We notice that 
\begin{equation*}
     \psi(\cdot, [\nabla v (I-B)^{1-\theta}](\cdot)(I-B)^{\theta}P g)(x) 
=
         \psi( \cdot, [\nabla Y (I-B)^{1-\theta}](\cdot)(I-B)^{\theta}P g)(x)
\end{equation*}
and we recall that 
\begin{align*}
     [\nabla v(I-B)^{1-\theta}](\bx(t;0,x))(I-B)^{\theta}Pg=Z(t;0,x), \qquad 0\leq  t.
\end{align*}
Hence
\begin{multline}\label{eq:psiPtT}
     P_{t}\left[ \psi( \cdot, [\nabla v (I-B)^{1-\theta}](\cdot)(I-B)^{\theta}P g)\right](\bx(t;0,x)) \\
  = \bE \left[ \psi(\bx(t;0,x),  Z(t;0,x))\right].
\end{multline}
On the other hand, applying the It\^o formula to the backward equation gives
\begin{multline*}
    e^{-\lambda t}Y(t;t,x)-e^{-\lambda T}Y(T;t,x)+\int_t^T e^{-\lambda r}Z(r;t,x)\dd W(r)\\
      = -\int_t^T e^{-\lambda r} \psi(\bx(r;t,x),Z(r;t,x)) \dd r,
\end{multline*}
for any $0\leq t \leq T <\infty$.
Taking the expectation and recalling again Corollary \ref{cor:quad-var} we obtain 
\begin{align*}
   e^{-\lambda t} v(x)=e^{-\lambda T}\bE v(\bx(T;t,x))-\bE \int_t^T e^{-\lambda r}\psi(\bx(r;t,x),Z(r;t,x)) \dd r 
\end{align*}
and substituting in the integral the expression obtained in \eqref{eq:psiPtT} we get the required equality \eqref{eq:solvar}.
This completes the proof of the existence part.

Now we consider uniqueness of the solution. Let $v$ denote a mild solution. We look for a convenient expression for the process $v(\bx(r;t,x))$, $0\leq t \leq r \leq T <\infty$. By \eqref{eq:solvar}
and the definition of $P_{t}$, for any $y\in X$ we have
\begin{equation}\label{eq:12set}
\begin{aligned}
   & v(y)
    = e^{-\lambda (T-t)}\bE \left[v(\bx(T-t;0,y))\right]\\
  &-\int_0^{T-t} 
      e^{-\lambda r}\bE \left[ \psi(\bx(r;0,y),[\nabla v (I-B)^{1-\theta}] (r,\bx(r;0,y))(I-B)^{\theta}P g )\right]
    \dd r. 
\end{aligned}
\end{equation}
Set $y=\bx(t;0,x)$. Then equality \eqref{eq:12set} rewrites as
\begin{multline*}
    v(\bx(t;0,x))
    = e^{-\lambda (T-t)}\bE \left[v(\bx(T-t;0,\bx(t;0,x)))\right]\\
  -\int_0^{T-t} 
      e^{-\lambda r}\bE \left[ \psi(\bx(r;0,\bx(t;0,x)),[\nabla v (I-B)^{1-\theta}] (\bx(r;0,\bx(t;0,x)))(I-B)^{\theta}P g )\right]
    \dd r. 
\end{multline*}
Moreover, 
recalling that for any $r\in [t,T]$ the equality 
$$
   \bx(r;t,\bx(t;0,x))=\bx(r;0,x)
$$ hold $\PP$-a.s.  we obtain
\begin{multline*}
    v(\bx(t;0,x))
    = e^{-\lambda (T-t)}\bE \left[v(\bx(T;t,x))\right]\\
  -\int_0^{T-t} 
      e^{-\lambda r}\bE \left[ \psi(\bx(r+t;0,x),[\nabla v (I-B)^{1-\theta}] (\bx(r+t;0,x))(I-B)^{\theta}P g )\right]
    \dd r. 
\end{multline*}
 Since $\bx(t;0,x)$ is $\cF_t$-measurable, we can replace the expectation by the conditional expectation given $\cF_t$:
\begin{multline*}
    e^{-\lambda t}v(\bx(t;0,x))
    = e^{-\lambda T}\bE ^{\cF_t}\left[v(\bx(T;t,x))\right]\\
  -\bE^{\cF_t}\left[ \int_t^T 
       e^{-\lambda r}\psi(\bx(r+t;0,x),[\nabla v (I-B)^{1-\theta}] (\bx(r+t;0,x))(I-B)^{\theta}P g 
    \dd r\right]. 
\end{multline*}
and, by change of variable,
we get:
\begin{align*}
    e^{-\lambda t} v(\bx(t;0,x)) &=e^{-\lambda T} \bE ^{\cF_t}\left[v(\bx(T;t,x))\right] \\
     &  -  \bE ^{\cF_t}\left[\int_t^T 
     \psi(\bx(r;0,x),[\nabla v (I-B)^{1-\theta}] (\bx(r;0,x))(I-B)^{\theta}P g 
    \dd r\right]\\
     &= \bE^{\cF_t}[\xi] \\
     &+\bE^{\cF_t} \left[\int_0^t  \psi( \bx(r;0,x),[\nabla v (I-B)^{1-\theta}] (\bx(r;0,x))(I-B)^{\theta}P g 
    \dd r\right],
\end{align*}
where we have defined 
\begin{multline*}
     \xi:= e^{-\lambda T}v(\bx(T;t,x))\\
     - \int_0^T  \psi( \bx(r;0,x),[\nabla v (I-B)^{1-\theta}] (\bx(r;0,x))(I-B)^{\theta}P g )\dd r.
\end{multline*}
Since we assume polynomial growth for $v$ and $\nabla v$, therefore $\xi$ is square integrable. Since
$(\cF_t)_{t\geq 0}$ is generated by the Wiener process $W$, it follows that there exists $\tilde{Z} \in L^2_\cF(\Omega\times [0,T];L_2(\Xi;\R))$ such that 
  $$\bE^{\cF_t}[\xi]=\bE[\xi]+\int_0^t \tilde{Z}(r)\dd W(r), \qquad t \in [0,T].
$$
  An application of the It\^o formula gives
\begin{multline}\label{eq:decomp}
    v(\bx(t;0,x))=\bE[\xi]+\int_0^t e^{\lambda r}\tilde{Z}(r)\dd W(r) + \lambda \int_0^t v(\bx(r;0,x)\dd r\\
      + \int_0^t  \psi(r, \bx(r;t,x),[\nabla v (I-B)^{1-\theta}] (r,\bx(r;t,x))(I-B)^{\theta}P g 
    \dd r,
\end{multline}
 We conclude that the process $v(\bx(t;s,x))$, $t\in [s,T]$ is a real continuous semimartingale.

For $\xi \in \Xi$, let us define $W^\xi$ by $W^\xi(t)=\langle \xi,W(t)\rangle$ and let us consider the joint quadratic variation process of $W^\xi$ with both sides of \eqref{eq:decomp}. 
%By the assumption made in
%\ref{it:boundv} we have that there exists a constant $K$ such that 
%$\|\nabla v(x)\|_{L(X)} \leq K (1+\|x\|_\eta)^m$, for  $x\in X_\eta$; then we can apply Proposition \ref{prop:quad-var} to conclude that the joint quadratic variation equals
Applying Proposition \ref{prop:quad-var}, we obtain, $\PP$-a.s., 
%\begin{align*}
%    \int_0^t [\nabla v(I-B)^{1-\theta}](\bx(r;0,x))(I-B)^{\theta}P %g\dd r, \quad t\in [0,T], \xi \in \Xi.
%\end{align*} 
%Computing the joint quadratic variation of the left-hand side of \eqref{eq:decomp} with $W$ yields the identity
\begin{align*}
    \int_0^t [\nabla v(I-B)^{1-\theta}](\bx(r;0,x))(I-B)^\theta P g \dd r= \int_0^t \tilde{Z}(r)\dd r.
\end{align*}
Therefore, for a.a. $t\in [0,T]$, we have $\PP$-a.s. $[\nabla v(I-B)^{1-\theta}](\bx(r;0,x))(I-B)^\theta Pg =\tilde{Z}(r)$, so substituting into \eqref{eq:decomp} we obtain, for $t\in [0,T]$,
\begin{multline*}
     v(\bx(t;0,x))= v(x) +\int_0^t [\nabla v(I-B)^{1-\theta}](\bx(r;t,x))(I-B)^\theta Pg \dd W(r)\\
    +\int_0^t \lambda v(\bx(r;0,x)
    \dd r+ \int_0^t \psi(\bx(r;t,x),[\nabla v (I-B)^{1-\theta}] (\bx(t;0,x))(I-B)^{\theta}P g 
    \dd r.
\end{multline*}
Comparing with the backward equation \eqref{eq:bsde2} we notice that the pairs 
\begin{align*}
    & (Y(t;0,x),Z(t;0,x))
\intertext{and}
  & (v(\bx(t;0,x)), [\nabla v(I-B)^{1-\theta}](\bx(r;0,x))(I-B)^\theta Pg =\tilde{Z}(r))
\end{align*}
solve the same equation. By uniqueness, we have $Y(t;0,x)=v(\bx(t;0,x)), t\in [0,T]$, and setting $t=0$ we obtain $Y(t;t,x)=v(x)$. 
\end{proof}

\section{Synthesis of the optimal control}\label{sec:syn-cont}
In this section we proceed with the study of the optimal control problem associated with the stochastic Volterra equation 
   \begin{align}\label{eq:Volterra-cont}
    \begin{cases}
       \frac{\dd}{\dd t} \int_{-\infty}^t a(t-s)u(s)\dd s= A u(t)+ f(u(t))\\
         \qquad \qquad \qquad \qquad + g(u(t)) \,[\, r(u(t),\gamma(t))
            + \dot{W}(t)\, ], \qquad t\in [0,T]\\
       u(t)=u_0(t), \qquad t\leq 0.
    \end{cases}
\end{align}
   for a process $u$ with values in the Hilbert space $H$. 
   Here $f$ and $r$ are the nonlinear functions introduced in Hypothesis \ref{hp:a,A,f,g,r,W} and $\gamma=\gamma(\omega,t)$ is the control variable, which is assumed to be 
a predictable real-valued process $\cF_t$-adapted. The optimal control that we wish treat 
  consists in minimizing over all admissible controls a cost functional of the form 
\begin{align}\label{eq:cost1}
    \bJ(u_0,\gamma)=\bE \int_0^\infty e^{-\lambda t}\ell(u(t),\gamma(t))\dd t,
\end{align}
where $\ell: \ H\times U \to \R$ is a given real-valued function.

We will work under assumptions \ref{hp:a,A,f,g,r,W} and \ref{hp:grad-psi}. %Concerning the functions 
%$r$ and $\ell$ we require:
%\begin{hypothesis}
 %   \begin{enumerate} 
 %      \item[]
 %       \item $r: \ H \times \cU \ \to \ \Xi$ is a bounded measurable function and there exists $m\in \bN$ and $C_r>0$ such that, for any $u_1,u_2\in H$, $\gamma \in U$ 
  %    \begin{align*}
  %        \|r(u_1,\gamma)-r(u_2,\gamma)\| \leq C(1+|u_1|+|u_2|)^m|u_1-u_2|;
  %     \end{align*}
  %    \item $\ell: \ H\times U \to \R$ is continuous and bounded.
  %   \end{enumerate}
%\end{hypothesis}

%In order to characterize the optimal control through a feedback law, we impose the following 
%additional condition on the nonlinear term $f$:
%\begin{hypothesis}
%    
%\end{hypothesis}

To handle the control problem, we first restate equation \eqref{eq:Volterra-cont}
in an evolution setting and we provide the synthesis of the optimal control by using 
the forward backward system approach.

As it has been proved in Section \ref{sec:anal-set}, given a control process $\gamma$ and any $u_0\in H$,
we can rewrite the problem \eqref{eq:Volterra-cont} in the following abstract form
\begin{align*}
     \begin{cases}
         \dd \bx(t)= B \bx(t) \dd t + (I-B)f(J \bx(t)) \dd t  \\ \qquad \qquad \qquad  +
          (I-B)Pg(r(J\bx(t),\gamma(t)) \dd t + \dd W(t))\\
         \bx(0)=x. 
     \end{cases}
\end{align*}
Here $X$ is a suitable separable Hilbert space, $B$ is a densely defined sectorial operator
on a domain $D(B) \subset X$, $P$ is a linear operator from $H$ with values into a real interpolation space $X_\theta$ ($\theta \in (0,1)$) between $D(B)$ and $X$ and $J$ is a linear operator from $X_\eta$ ($\eta \in (0,1)$) into $H$; finally $x\in X_\eta$ (see Theorem \ref{t:state space setting} for more details).  

In this setting the cost functional will depend on $x$ and $\gamma$ and is given by
\begin{align}\label{eq:costacs}
      \bJ(x,\gamma)= \bE \int_0^\infty e^{-\lambda s} \ell(J\bx(s),\gamma(s))\dd s;
\end{align}
(with an abuse of notation we still denote the rewritten cost functional as $\bJ$).
We notice that for all $\lambda >0$ the cost functional is well defined and $\bJ(x,\gamma) <\infty$ for all $x\in X_\eta$ and all a.c.s.
There are different ways to give precise meaning to the above problem; one 
of them is the so called \emph{weak formulation} and will be specified below.

In the weak formulation the class of \emph{admissible control system} (a.c.s.) is given by
the set $\bU:= (\hat{\Omega},\hat{\cF},(\hat{\cF}_t)_{t\geq 0},\hat{\PP},\hat{W},\hat{\gamma})$, where
$ (\hat{\Omega},\hat{\cF},\hat{\PP})$ is a complete probability space; the filtration $(\hat{\cF}_t)_{t\geq 0}$ verifies the usual conditions, the process $\hat{W}$ is a Wiener process
with respect to the filtration $(\hat{\cF}_t)_{t\geq 0}$ and the control $\hat{\gamma}$ is an 
$\cF_t$-predictable process taking values in some subset $\cU$ of $X$ with respect to the filtration $(\hat{\cF}_t)_{t\geq 0}$. 
With an abuse of notation, for given $x\in X_\eta$, we associate to every a.c.s.
a cost functional $\bJ(x,\bU)$ given by the right side of \eqref{eq:costacs}. Although formally the
same, it is important to note that now the cost is a functional of the a.c.s. and
not a functional of $\hat{\gamma}$
 alone. Any a.c.s. which minimizes $\bJ(x,\cdot)$, if it exists, is
called optimal for the control problem starting from $x$ at time $t$ in the weak
formulation. The minimal value of the cost is then called the optimal cost.
Finally we introduce the value function $V: \ X_\eta \to  \bR$ of the problem as:
\begin{align*}
   V(x)=\inf_{\gamma\in \cU} \bJ(x,\gamma), \quad x\in X_\eta,
\end{align*}
where the infimum is taken over all a.c.s. $\bU$.
%At this moment it is convenient to list the relevant properties of the objects
%introduced so far in this section. Therefore we formulate the following
%proposition

Optimal control problems on infinite horizon of the type considered in this paper (with coefficients having the properties listed in Hypotheses \ref{hp:a,A,f,g,r,W}, \ref{hp:l}, \ref{hp:grad-psi}) have
been exhaustively studied by Fuhrman and Tessitore in \cite{FuTe/2002,FuTe/2004} and Hu and Tessitore in \cite{HuTe/2006}, compare Theorem 5.1.
Within their approach the existence of an optimal control is related to the existence
of the solution of a suitable forward backward system (FBSDE) that is a system in which the coefficients of the  backward 
equation depend on the solution of the forward equation.
Moreover, the optimal control can be selected using a feedback law given in terms of the
solution to the corresponding FBSDE.

We recall that the Hamiltonian 
corresponding to our control problem is given by
%\begin{align*} 
%   \psi: \ [0,T] \times \cH \times \cH \to \cH
%\end{align*}
%setting 
\begin{align*}
     \psi(x,z)= \inf_{\gamma\in \cU}\left\{\ell(Jx,\gamma)+ z\cdot r(Jx,\gamma) \right\}, \qquad 
  x\in X_\eta \ z \in \Xi^\star,
\end{align*}
and we define the following set
\begin{equation}\label{eq:Gamma}
\begin{aligned}
   \Gamma(x,z)= \left\{\gamma\in \cU: \ell(Jx\gamma)+z\cdot r(Jx,\gamma)= \psi(x,z)\right\},\\
   \qquad \qquad \quad \qquad t\in [0,T], \ x \in X_\eta, \ z \in \Xi^\star.
\end{aligned}
\end{equation}

For further use we require an additional property of the function $\psi$:
\begin{hypothesis}\label{hp:psi}
    For all $x\in X_\eta$, $z\in \Xi^\star$ there exists a unique $\Gamma(x,z)$
that realizes the minimum in 
\eqref{eq:Gamma}. Namely:
     \begin{align*}
       \psi(x,z)=\ell(Jx,\Gamma(x,z))+z\cdot r(Jx,\Gamma(x,z)),
     \end{align*} 
    with $\Gamma \in C(X_\eta\times \Xi^\star;U)$.
   %  \item The map $\psi(\cdot, \cdot)$ is G\^ateaux differentiable on $X_\eta \times \Xi^\star$ and the maps $(x,h,z) \mapsto \nabla_x \psi(x,z)[h]$
%and $(x,z,k) \mapsto \nabla_z \psi(x,z)[k]$ are continuous on $X_\eta \times X_\eta\times\Xi^\star$ and 
%$X_\eta \times \Xi^\star\times\Xi^\star$ respectively.
 % \end{enumerate}
\end{hypothesis}
%\begin{rem}
% It is easy to prove that combining the previous assumption with Hypothesis %\ref{prop:grad-psi} we can deduce the following 
%properties of $\psi$:
%%\begin{prop}
%\begin{enumerate}
%\item $\psi$ is a measurable mapping and there exists a constant $C$ such that 
%\begin{align*}
%   |\psi(x_1,z) - \psi(x_2,z)|\leq  C (1 + |x_1| + |x_2|)|x_2 -x_1| 
%\end{align*}
%for all $x_1,
%x_2\in X_\eta$ and $z\in \Xi^\star$.
%\item Setting $C_\cU := \sup\left\{ |\gamma| : \ \gamma \in \cU\right\}$
%   we have $$|\psi(x,z_1) -\Psi(x,z_2)| \leq  C_\cU |z_1 - z_2|,$$ for every
%$x \in X_\eta$ and $z_1, z_2 \in \Xi^\star$. 

%Finally,  $\sup_{s\in [0,T ]}
%|\psi(s, 0, 0)| \leq C$.
%\end{enumerate}
%\end{rem}
%The control problem is understood in
%the usual weak sense: the cost functional is minimized over all the so-called
%admissible control systems (a.c.s.). We recall that, in this framework, an a.c.s. is given 
%We work under the following assumptions. 

Now,
let us consider an arbitrary set-up $(\tilde{\Omega},\tilde{\cF},\tilde{\PP},\tilde{W})$ and 
% be a standard Wiener process in $\cX$, defined in some complete probability space
%$(\tilde{\Omega},\tilde{\cF},\tilde{\PP})$. For $0 \leq t  \leq  T$, we denote by  $\tilde{\cF}_t$ (resp. $\tilde{\cF}_{t}$) the $\sigma$-algebra generated by $\tW(t)$
%and augmented by the null sets of  $\tF$.
%For fixed $t \in [0, T]$ and $X \in \cH$ we the solution $(\tX(t))_{t\geq 0}$ of the uncontrolled equation \eqref{eq:abst}
\begin{equation}\label{eq:cont2}
\begin{aligned}
   &\tX(t)=e^{tB} x +
\int_0^t e^{(t-\sigma)B}(I-B)Pf(\tX(\sigma)) \dd \sigma +\\
&\qquad \qquad\int_0^t e^{(t-\sigma)B}(I-B)P g\, \dd \tilde{W}(\sigma), \quad  0\leq t \leq T <\infty.
\end{aligned}
\end{equation}
By Theorem \ref{thm:exuni} stated in Section \ref{sec:exun-abs}, equation \eqref{eq:cont2} is well-posed and the solution $(\tX(t))_{t\geq 0}$ is a continuous process in $X_\eta$, adapted to the filtration $(\tF_t)_{t \geq 0}$.
Moreover, the law of $(\tilde{W}, \tX)$ is uniquely determined by $x$, $B$, $f$ and $g$. 
We define the process
\begin{align*}
   \tilde{W}^\bU(t) = \tilde{W}(t) - \int_0^t r(\tX(s), \tilde{\gamma}(s)) \dd s, \quad 0\leq t \leq T, 
\end{align*}
and we note that, since $r$ is bounded, by the Girsanov theorem there exists a probability measure $\PP $ on $(\Omega,\cF)$
such that $\tilde{W}^\bU$ is a Wiener process under $\PP$. Rewriting equation \eqref{eq:cont2}
in terms of $\tilde{W}^\bU$ we get that
$\tX$ solves the controlled state equation (in weak sense)
\begin{equation}
\begin{aligned}
&\tilde{\bx}(t) = x + \int_0^t e^{(t-\sigma)B}(I-B)P(\tilde{\bx}(\sigma)) \dd \sigma +\\
&\qquad\int_0^t e^{(t-\sigma)B}(I-B)Pg\dd \tilde{W}^\bU(\sigma) +
\int_0^t e^{(t-\sigma)\bA}(I-B)P g r(\tX(s) , \tilde{\gamma}(s)) \dd s.
\end{aligned}
\end{equation}

We notice that for all $\lambda >0$ the cost functional is well defined and $\bJ(x,\gamma)<\infty$ for all $x\in X_\eta$ and all a.c.s. $\bU$.

By Theorem \ref{thm:ident}, for all $\lambda >0$ the stationary Hamilton-Jacobi-Bellman equation relative to the above stated problem, namely:
\begin{align}\label{eq:HJB2}\tag{$HJB$}
        \cL v(x)=\lambda v(x)- \psi(x,\nabla v(x)(I-B)Pg), \quad  x\in X_\eta
\end{align}
admits a unique mild solution, in the sense of Definition \ref{def:solHJB}.
Here $\cL$ is the infinitesimal generator of the Markov semigroup corresponding to the process $\bx$:
\begin{align*}
    \cL \phi(x)= \frac{1}{2}{\rm Tr}((I-B)Pgg^*P^*(I-B)^*\nabla^2\phi(x)) + 
     \langle B x + (I-B)Pf(x),\nabla\phi(x)\rangle.
\end{align*}
% We set
%\begin{equation}\label{eq:Ytt}
%   \bJ^*(t,X_0) = \tY(t; t,X_0).
%\end{equation}
Let $v$ be the unique mild solution of equation \eqref{eq:HJB2}. Consider the following finite horizon backward equation (with respect to probability $\tilde{P}$ and to the filtration generated by $\left\{ \tilde{W}_t:  \ t\in [0,T] \right\}$:

\begin{align}\label{eq:bsde}
  \tY(t) -v(\tX(T))+ \int_t^T \tZ \dd \tW(\sigma) = \int_t^T
\psi( \tX(\sigma), \tZ(\sigma))\dd \sigma- \lambda \int_t^T \tY(\sigma) \dd \sigma,
\end{align}
where $\psi$ is the Hamiltonian function.
It was proved in Section \ref{sec:backward} that there exists a solution $(\tX,\tY, \tZ)$ of the forward-backward system
\eqref{eq:cont2}-\eqref{eq:bsde} on the interval $[0, T]$, where $\tY$ is
unique up to indistinguishability and $\tZ$ is unique up to modification. Moreover from the proof
of Theorem 4.8\cite{FuTe/2002} it follows that the law of $(\tY,\tZ)$ is uniquely determined by the law of $(\tW, \tX)$
and $\psi$. To stress dependence on the initial datum $x$, we will denote the solution of \eqref{eq:cont2} and \eqref{eq:bsde}
by $\{ (\tilde{\bx}^{x}(t), \tY^{x}(t), \tZ^{x}(t)), t \in [0, T]\}$.
 Theorem \ref{thm:ident} and uniqueness of the solution of system \eqref{eq:cont2}-\eqref{eq:bsde}, yields that
\begin{align*}
     \tilde{Y}^x(t)=v(\tilde{\bx}^x(t)), \qquad \tZ^x(t)=[\nabla v(I-B)^{1-\theta}](\tilde{\bx}^x(t))(I-B)^\theta P g.
\end{align*}
%We note that $\tY(t)$, being measurable with respect to the degenerate $\sigma$-algebra
%$\tF_{0}$, is deterministic; in particular $\tY(t) = \bE(\tY(t))$ only depends on the law of $\tY$, and thus it is a
%functional of $X_0,\bA, \bF, \sqrt{\bQ}, \Phi, \Psi$.
Hence the solution of the \eqref{eq:HJB} equation is the value function of the control problem; this allows to construct the optimal feedback law.  The relevance of the Hamilton-Jacobi-Bellman equation to our control problem is explained in the following main result of this section:
\begin{thm}
Assume Hypotheses \ref{hp:grad-psi} and \ref{hp:psi} and suppose that $\lambda >0$. Then the following holds:
\begin{enumerate}
\item For all a.c.s. we have $\bJ(x,\gamma)\geq v(x)$.
\item The equality holds if and only if the following feedback law is verified by $\gamma$ and $\tX^x$:
\begin{align}\label{eq:Gamma1}
\tilde{\gamma}(t) =\Gamma(\tilde{\bx}(t), \nabla v(\tilde{\bx}(t))(I-B)Pg), \qquad \P-a.s. for a.e. t\geq 0.
\end{align}
Finally, there
exists at least an a.c.s. $\bU$ verifying \eqref{eq:Gamma1}. 
In such a system, the closed loop equation admits a solution
\begin{equation}
    \begin{cases}
        \dd \tilde{\bx}(t)= B \tilde{\bx}(t) \dd t+ (I-B)Pg\tilde{\bx}(t))\dd t + \\
        \qquad \quad (I-B)Pg\left(r(\tilde{\bx}(t),\Gamma(\tilde{\bx}(t),[\nabla v(\tX(t)) (I-B)^{1-\theta}](I-B)^\theta Pg))\dd t + \dd \tilde{W}^\bU(t) \right), 
         \ \ t\geq 0\\
        \tilde{\bx}(0)=x \in X_\eta,
    \end{cases}
\end{equation}
   and if $\tilde{\gamma}(t)=\Gamma(\tilde{\bx}(t),[\nabla v(\tilde{\bx}(t))(I-B)^{1-\theta}](I-B)Pg )$
then the couple $(\tilde{\gamma},\tilde{\bx})$ is optimal for the control problem.
\end{enumerate}
\end{thm}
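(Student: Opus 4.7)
The plan is to establish the verification theorem via the standard ``fundamental relation'' argument: apply It\^o's formula to $e^{-\lambda t}v(\hat{\bx}(t))$ along the controlled trajectory, reduce to the uncontrolled dynamics \eqref{eq:un-state-eq} by a Girsanov change of measure, invoke the BSDE identifications of Theorem \ref{thm:ident} and Corollary \ref{cor:quad-var} to express $v(\hat{\bx})$ and its It\^o differential in terms of the backward processes $(Y,Z)$, and conclude by the pointwise inequality built into the definition \eqref{eq:ham2} of the Hamiltonian.

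First I would fix an arbitrary a.c.s.\ $\bU=(\hat{\Omega},\hat{\cF},(\hat{\cF}_t)_{t\geq 0},\hat{\PP},\hat{W},\hat{\gamma})$ with controlled trajectory $\hat{\bx}$. Setting $\hat{W}^{\bU}(t):=\hat{W}(t)+\int_0^t r(J\hat{\bx}(s),\hat{\gamma}(s))\dd s$ and using that $r$ is bounded (Hypothesis \ref{hp:a,A,f,g,r,W}), Girsanov's theorem produces an equivalent probability $\hat{\PP}^{\bU}$ under which $\hat{W}^{\bU}$ is a cylindrical Wiener process, and under $\hat{\PP}^{\bU}$ the process $\hat{\bx}$ solves the uncontrolled equation \eqref{eq:un-state-eq} driven by $\hat{W}^{\bU}$. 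Corollary \ref{cor:quad-var} then identifies $v(\hat{\bx}(t))=Y(t;x)$ with $Z(t)=[\nabla v(I-B)^{1-\theta}](\hat{\bx}(t))(I-B)^\theta Pg$, and the BSDE \eqref{eq:bsde2} rewritten in terms of the $\hat{\PP}^{\bU}$-Wiener process $\hat{W}^{\bU}$ yields the semimartingale decomposition
\begin{equation*}
\dd v(\hat{\bx}(t))=[\lambda v(\hat{\bx}(t))-\psi(\hat{\bx}(t),Z(t))]\dd t+Z(t)\dd \hat{W}^{\bU}(t).
\end{equation*}

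Substituting back $\dd \hat{W}^{\bU}=\dd \hat{W}+r(J\hat{\bx},\hat{\gamma})\dd t$, applying It\^o's formula to $e^{-\lambda t}v(\hat{\bx}(t))$ under $\hat{\PP}$, integrating over $[0,T]$ and taking $\hat{\bE}$-expectation---the stochastic integral being a true martingale thanks to the uniform bound on $[\nabla v(I-B)^{1-\theta}]$ from Corollary \ref{cor:Y-v}---I obtain
\begin{equation*}
v(x)=\hat{\bE}[e^{-\lambda T}v(\hat{\bx}(T))]+\hat{\bE}\int_0^T e^{-\lambda t}\bigl[\psi(\hat{\bx}(t),Z(t))-Z(t)\cdot r(J\hat{\bx}(t),\hat{\gamma}(t))\bigr]\dd t.
\end{equation*}
The defining infimum in \eqref{eq:ham2} gives the pointwise inequality $\psi(\hat{\bx},Z)-Z\cdot r(J\hat{\bx},\hat{\gamma})\leq \ell(J\hat{\bx},\hat{\gamma})$, with equality if and only if $\hat{\gamma}(t)\in\Gamma(\hat{\bx}(t),Z(t))$. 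Since $|v|\leq M/\lambda$ by Proposition \ref{prop:bsde}, the boundary term $e^{-\lambda T}\hat{\bE}[v(\hat{\bx}(T))]$ vanishes as $T\to\infty$, and dominated convergence (using boundedness of $\ell$) yields $v(x)\leq \bJ(x,\bU)$ with equality if and only if the feedback law \eqref{eq:Gamma1} holds $\hat{\PP}$-a.s.\ for a.e.\ $t$. This proves item (1) and the characterization in (2).

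For the existence part I would run the construction in reverse: starting from an auxiliary basis $(\tilde{\Omega},\tilde{\cF},\tilde{\PP},\tilde{W})$, solve the uncontrolled equation \eqref{eq:un-state-eq} for $\tilde{\bx}$ (Theorem \ref{thm:exuni}) and define
\begin{equation*}
\tilde{\gamma}(t):=\Gamma\bigl(\tilde{\bx}(t),[\nabla v(I-B)^{1-\theta}](\tilde{\bx}(t))(I-B)^\theta Pg\bigr),
\end{equation*}
which is predictable and $\cU$-valued by the continuity/uniqueness in Hypothesis \ref{hp:psi} combined with the continuity of $(x,h)\mapsto[\nabla v(I-B)^{1-\theta}](x)h$ from Corollary \ref{cor:Y-v}. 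A Girsanov change of measure in the opposite direction, turning $\tilde{W}^{\bU}:=\tilde{W}-\int_0^{\cdot}r(J\tilde{\bx},\tilde{\gamma})\dd s$ into a Wiener process under a new measure $\PP$, converts $\tilde{\bx}$ into the solution of the closed-loop equation, and applying the preceding argument to this choice delivers $\bJ(x,\tilde{\bU})=v(x)$. The main obstacle throughout is the combined Girsanov/BSDE manipulation in the cylindrical, infinite-dimensional setting: one must ensure that all stochastic integrals remain genuine (not merely local) martingales and that the passage $T\to\infty$ is legitimate. Both requirements reduce to the uniform bounds on $r$, $\ell$, $v$, and crucially on $[\nabla v(I-B)^{1-\theta}]$, so the real technical content of the argument has already been absorbed into the regularity results of Section \ref{sec:backward}.
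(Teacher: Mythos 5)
Your verification argument is correct and is essentially the proof the paper intends: the theorem is stated without a written proof, but the surrounding text of Section \ref{sec:syn-cont} (the Girsanov change of measure reducing the controlled dynamics to the uncontrolled equation, the identifications $\tY^{x}(t)=v(\tilde{\bx}^{x}(t))$ and $\tZ^{x}(t)=[\nabla v(I-B)^{1-\theta}](\tilde{\bx}^{x}(t))(I-B)^{\theta}Pg$ coming from Theorem \ref{thm:ident} and Corollary \ref{cor:quad-var}, and the appeal to the Fuhrman--Tessitore and Hu--Tessitore scheme) sets up exactly the fundamental-relation computation you carry out, including the reverse Girsanov construction for the existence of an optimal a.c.s. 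The points you pass over quickly (transferring the BSDE identification to the new measure through equality of laws, and the filtration of a general a.c.s.\ possibly being strictly larger than the Brownian one) are treated at the same level of informality in the paper itself, so there is no substantive discrepancy.
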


\section{Aknowledgment}
The author would like to express her gratitude to Marco Fuhrman and Gianmario 
Tessitore for stimulating discussions and useful comments. 

\bibliographystyle{model1a-num-names}

\begin{thebibliography}{99}
\footnotesize




\bibitem{agrawal/2002} {\sc Agrawal, O. P.} (2002).  Solution for a
  Fractional Diffusion-Wave Equation Defined in a Bounded Domain.
  \emph{Nonlinear Dynamics} {\bf 29}, 145--155.

% \bibitem{AB1} {\sc E. Al\`os and S. Bonaccorsi} (2002), Stochastic
%   partial differential equations with Dirichlet white-noise boundary
%   conditions.  \emph{Ann. Inst.  H. Poincar�e Probab. Statist.} {\bf
%     38}, 125--154.

% \bibitem{AB2} {\sc E. Al\`os and S. Bonaccorsi} (2002), Stability for
%   stochastic partial differential equations with Dirichlet white-noise
%   boundary conditions.  \emph{Infin. Dimens. Anal. Quantum
%     Probab. Relat. Top.} {\bf 5}, 465--481.

% \bibitem{AMN2} {\sc Al\'os, E., Mazet, O. and Nualart, D.} (2001).
%   Stochastic calculus with respect to Gaussian processes. \AP {\bf
%     29}, no.2, 766--801.

\bibitem{AmbroProdi} {\sc Ambrosetti, A. and Prodi, G.} (1995). {\em A
    primer of nonlinear analysis.} Cambridge Studies in Advanced
  Mathematics, 34. Cambridge University Press, Cambridge.

% \bibitem{barbu/daprato/roeckner/2007} {\sc Barbu, V., Da Prato G. and
%     R\"ockner, M.} (2007). Existence of strong solutions for stochastic
%   porous media equation under general monotonicity conditions.
%   preprint di Matematica, n.5, Scuola Normale Superiore, Pisa.

% \bibitem{BiHuOkZh} {\sc Biagini, F., Hu, Y., {\O}ksendal, B. and Zhang
%     T.} (2008).  {\em Stochastic calculus for fractional Brownian
%     motion and applications.}  Probability and its Applications (New
%   York). Springer-Verlag London, Ltd., London.

% \bibitem{Blunck} {\sc Blunck, S.} (2002). Some remarks on
%   operator-norm convergence of the Trotter product formula on Banach
%   spaces.  \emph{J. Funct. Anal.} {\bf 195}, no. 2, 350--370.

\bibitem{bonaccorsi/desch} {\sc Bonaccorsi, S. and Desch, W.} Volterra equations in Banach spaces with completely monotone
 kernels.
 {\em NoDEA Nonlinear Differential Equations Appl.} {\bf  20}  (2013),  no. 3, 557--594.

\bibitem{BoMa-JEE} {\sc Bonaccorsi, S. and Mastrogiacomo, E.}  (2009).
  An analytic approach to stochastic Volterra equations with
  completely monotone kernels.  \emph{Journal of Evolution Equations}
  {\bf 9}, pp.315--339.

\bibitem{BCM11} {\sc Bonaccorsi, S. and Confortola, F. and Mastrogiacomo, E.}
  Optimal control for stochastic Volterra equations with
  completely monotone kernels.  \emph{to appear in SIAM J. Control Optim.}

% \bibitem{BoMa-IDAQP} {\sc Bonaccorsi, S. and Mastrogiacomo, E.}
%   (2008).  Analysis of the Stochastic Fitzhugh-Nagumo System.
%   \emph{Inf. Dim.  Anal. Quantum Probab. Rel. Fields} {\bf 11}, no. 3,
%   427--446.

% \bibitem{BoMa-LDP} {\sc Bonaccorsi, S. and Mastrogiacomo, E.} (2008).
%   Large deviation principle for stochastic FitzHugh-Nagumo equations
%   on networks. Technical Report UTM 725, Matematica, University of
%   Trento. Submitted to Electronic Journal of Differential Equations.

% \bibitem{BoMu} {\sc Bonaccorsi, S. and Mugnolo, D.} (2008). Long-time
%   behavior of stochastically perturbed neuronal networks.
%   arXiv:0807.4057.

% \bibitem{BoZi} {\sc Bonaccorsi, S. and Ziglio, G.} (2006). A semigroup
%   approach to stochastic dynamical boundary value problems. In
%   \emph{System, Control, Modeling and Optimization}, IFIP Int. Fed.
%   Inf.  Process., {\bf 202}, pp.~55--65.

\bibitem{BrCo} {\sc Briand, P.;  Confortola, F.}  {\it BSDEs with stochastic Lipschitz condition and quadratic PDEs in Hilbert
 spaces.}
 Stochastic Process. Appl.  {\bf 118 } (2008),  no. 5, 818--838.

\bibitem{BrHu} {\sc Briand, Ph. and Hu, Y.} (2006). BSDE with
  quadratic growth and unbounded terminal value. {\em Probab. Theory
    Related Fields} {\bf 136}, n. 4, pp. 604--618.

\bibitem{BrHu08} {\sc Briand, Ph. and Hu, Y.}  (2008). Quadratic BSDEs
  with convex generators and unbounded terminal conditions. {\em
    Probab. Theory Related Fields} {\bf 141}, no. 3-4, 543-567.

\bibitem{Buckdan} {\sc Buckdahn, R.;  Peng, S.} Stationary backward stochastic differential equations and associated
 partial differential equations.
{\em  Probab. Theory Related Fields } {\bf 115}  (1999),  no. 3, 383--399.
		

% \bibitem{CM07} {\sc Cardanobile, S.  and Mugnolo, D.} (2007). Analysis
%   of a FitzHugh--Nagumo--Rall model of a neuronal network.
%   \emph{Math.  Meth. Applied Sci.} {\bf 30}, 2281--2308.

% \bibitem{CMN08} {\sc Cardanobile S., Mugnolo D. and Nittka, R.}
%   (2008).  Well-posedness and symmetries of strongly coupled network
%   equations. \emph{J. Phys. A: Math. Phys.} {\bf 41}, 2008.

%% \bibitem{CENN} {\sc Casarino, V., Engel, K.-J., Nagel, R. and Nickel,
%%     G.} (2003). A semigroup approach to boundary feedback systems.
%%   {\em Integral Equations Operator Theory} {\bf 47}, no. 3, 289--306.

% \bibitem{CLCHH07} {\sc Chang, S., Li, S.~J., Chiang, M.~J., Hu, S.~J.
%     and Hsyu, M.~C.} (2007). Fractal dimension estimation via spectral
%   distribution function and its application to physiological signals.
%   \emph{IEEE Trans Biomed. Eng.} {\bf 54}, 1895--1898.

% \bibitem{christensen/kallianpur} {\sc Christensen, S.K.  and
%     Kallianpur, G.} (1986). Stochastic differential equations for
%   neuronal behavior.  In: \emph{Adaptive statistical procedures and
%     related topics (Upton, N.Y., 1985)}, IMS Lecture Notes Monogr.
%   Ser., 8, Inst. Math. Statist., Hayward, CA, 400--416.

\bibitem{Coleman1967} {\sc Coleman, B. and Gurtin, M.} (1967).
  Equipresence and constitutive equations for rigid heat conductors.
  {\em Z. Angew. Math. Phys.} {\bf 18}, 199--208.

\bibitem{CoMa11} {\sc Confortola, F. and Mastrogiacomo, E.} 
Feedback Optimal Control for Stochastic Volterra Equations with Completely Monotone Kernels.
available at {\tt http://arxiv.org/pdf/1112.0701.pdf}

\bibitem{dpz:N-B} {\sc Da Prato, G.  and Zabczyk, J.},
  (1993). Evolution equations with white-noise boundary
  conditions. \emph{Stoch. Stoch. Rep.} {\bf 42}, 167--182.

\bibitem{dpz:Ergodicity} {\sc Da Prato, G.  and Zabczyk, J.} (1996).
  {\em Ergodicity for  Infinite Dimensional Systems.} Cambridge Univ.
  Press, Cambridge.

\bibitem{DPZ92} {\sc Da Prato, G.  and Zabczyk, J.} (1992).
  {\em Stochastic Equations in Infinite Dimensions.} Cambridge Univ.
  Press, Cambridge.

\bibitem{Darling/Pardoux} {\sc Darling, R. W. R.; Pardoux, E.}  Backwards SDE with random terminal time and applications to semilinear
 elliptic PDE.
{\em  Ann. Probab. } {\bf  25 } (1997),  no. 3, 1135--1159.
		

\bibitem{DeFuTe} {\sc Debussche, A., Fuhrman, M. and Tessitore, G.}
  (2007).  Optimal Control of a Stochastic Heat Equation with
  Boundary-noise and Boundary-control.  \emph{ESAIM Control Optim.
    Calc. Var.}  {\bf 13}, no. 1, 178--205 (electronic).

\bibitem{desch/miller/1988} {\sc Desch, W. and Miller, R.K.} (1988).
  Exponential stabilization of Volterra integral equations with
  singular kernels. \emph{J. Integral Equations Appl.}  {\bf 1},
  pp.397--433.

% \bibitem{DLK04} {\sc Diba, K., Lester, H.~A. and Koch, C.} (2004).
% Intrinsic Noise in Cultured Hippocampal Neurons: Experiment and
% Modeling.  \emph{J. Neurosci.} {\bf 24}, 9723--9733, 2004.

% \bibitem{DM} {\sc Dobrushin, R.L. and Major, P.} (1979). Non-central
%   limit theorems for non-linear functionals of Gaussian fields.
%   \emph{Z.  Wahrscheinlichkeitstheorie verw. Gebiete}, \textbf{50},
%   27-52.

\bibitem{Engel2000} {\sc Engel, K.~J. and Nagel, R.} (2000). {\em
    One-parameter Semigroups for Linear Evolution Equations.} Graduate
  Texts in Math. {\bf 194}, Springer-Verlag, Berlin.

%% \bibitem{fggr-2002} {\sc Favini, A., Goldstein, G. R., Goldstein,
%%     J. A. and Romanelli, S.} (2002). The heat equation with
%%   generalized Wentzell boundary condition.  {\em J. Evol. Equ.}  {\bf
%%     2}, no. 1, 1--19.%%%%%%%%%%%

\bibitem{EkPQ} {\sc El Karoui, N., Peng, S. and Quenez, M. C.}
  (1997). Backward stochastic differential equations in finance.
  {\em Math. Finance}, {\bf 7}, n. 1, pp. 1--71.

\bibitem{EkH} {\sc El Karoui, N. and Hamad\`ene, S.} (2003). BSDEs and
  risk-sensitive control, zero-sum and nonzero sum game problems of
  stochastic functional differential equations. {\em Stochastic Proc.
    Appl.}, {\bf 107}, pp. 145--169.

\bibitem{F-G} {\sc Fabbri, G., Goldys, B.} (2009). An LQ problem for
  the heat equation on the half line with Dirichlet boundary control
  and noise. {\em SIAM J. Control Optim.} {\bf 48} (3), 1473--1488.

\bibitem{Fu-2003} {\sc Fuhrman, M.} (2003). A class of stochastic
  optimal control problems in Hilbert spaces: BSDEs and optimal
  control laws, state constraints, conditioned processes.  {\em
    Stochastic Process. Appl.} {\bf 108}, n. 2, pp. 263--298.

\bibitem{FuHuTe} {\sc Fuhrman, M., Hu, Y. and Tessitore, G.}  (2006).
  On a class of stochastic optimal control problems related to BSDEs
  with quadratic growth. \emph{SIAM J. Control Optim.} {\bf 45},
  1279--1296 (electronic).

\bibitem{FuTe/2002} {\sc Fuhrman, M. and Tessitore, G.} (2002).
  Nonlinear {K}olmogorov equations in infinite dimensional spaces: the
  backward stochastic differential equations approach and applications
  to optimal control. \emph{Ann. Probab.} {\small\bf 30}, n.3,
  pp.1397--1465.

\bibitem{FuTe/2004} {\sc Fuhrman, M. and Tessitore, G.} (2004).
  Existence of optimal stochastic controls and global solutions of
  forward-backward stochastic differential equations- {\em SIAM
    J. Control Optim.}, {\bf 43}, pp. 813--830.

% \bibitem{HH52} {\sc Hodgkin, A.~L.  and Huxley, A.~F.} (1952).  A
%   quantitative description of membrane current and its application to
%   conduction and excitation in nerve. \emph{J. Physiol.} {\bf 117}, 500--544.

\bibitem{GriLoSta} {\sc Gripenberg, G. and Londen, S. O. and Staffans, O.} 
(1990) Volterra Integral and Functional Equations {\em Cambridge University Press}

\bibitem{GroPa}{\sc Grorud, A. and Pardoux, E.}
  Int\'egrales Hilbertiennes anticipantes par rapport \`a un processus de Wienner 
     cylindrique et calcul stochastique associ\'e 
{\emph Appl. Math. Optim.} {\bf 38} (2000) pp. 400--430.

\bibitem{HL} {\sc Hamad\`ene, S.  and Lepeltier, J.-P.}
  (1995). Backward equations, stochastic control and zero-sum
  stochastic differential games. {\em Stochastics Stochastics Rep.},
  {\bf 54}, pp. 221--231.

\bibitem{homan} {\sc Homan, K.\ W.} (2003). {\em An Analytic Semigroup
    Approach to Convolution Volterra Equations}. Ph.D. Thesis, Delft
  University Press.
  
 \bibitem{HuTe/2006} {\sc Hu, Y. and  Tessitore, G.} {\em  BSDE on an infinite horizon and elliptic PDEs in infinite
 dimension.}
 NoDEA Nonlinear Differential Equations Appl.  {\bf 14}  (2007),  no. 5-6, 825--846.

%% \bibitem{IkWa} {\sc Ikeda, N. and Watanabe, S.} (1989). {\em
%%     Stochastic differential equations and diffusion processes.} Second
%%   edition. North-Holland Mathematical Library, 24. North-Holland
%%   Publishing Co., Amsterdam; Kodansha, Ltd., Tokyo.

% \bibitem{Iz04} {\sc Izhikevich, E.} (2004).  Which Model to Use for
%   Cortical Spiking Neurons? \emph{IEEE Trans. Neural Networks} {\bf 15},
%   1063--1070.

% \bibitem{JD+05} {\sc Jacobson, G.~A., Diba, K., Yaron-Jakoubovitch,
%     A.~A., Oz, Y., Koch, C., Segev, I. and Yarom, Y.} (2005).
%   Subthreshold voltage noise or rat neocortical pyramidal neurones. \emph{J.
%   Physiol.} {\bf 564}, 145--160, 2005.

% \bibitem{kallianpur/wolpert} {\sc Kallianpur, G.  and Wolpert, R.~L.}
%   (1987). Weak convergence of stochastic neuronal models. In:
%   \emph{Stochastic methods in biology (Nagoya, 1985)}, Lecture Notes in
%   Biomath., {\bf 70}, Springer, Berlin, 116--145.

%% \bibitem{kamien/muller/1976} {\sc Kamien, M.I. and Muller, E.} (1976).
%%     Optimal control with integral state equations. \emph{The
%%     Review of Economic Studies},  pp.469--473.

%% \bibitem{KaSh} {\sc Karatzas, I. and Shreve, S. E.} (1991). {\em
%%     Brownian motion and stochastic calculus.} Second edition. Graduate
%%   Texts in Mathematics, 113. Springer-Verlag, New York.

% \bibitem{KaLi08} {\sc Karczewska, A.  and Lizama, C.} (2008). Strong
%   solutions to stochastic Volterra equations. Preprint available on
%   http://arxiv.org/abs/math/0512532.

% \bibitem{KeSn} {\sc Keener J. and Sneyd, J.} (1998). {\em Mathematical
%     physiology}, Springer-Verlag, New York.

\bibitem{Kob} {\sc Kobylanski, Y.} (2000).  Backward stochastic
  differential equations and partial differential equations with
  quadratic growth. \emph{Ann. Probab.} {\bf 28}, n. 2, pp. 558--602.

\bibitem{K-R} {\sc Krylov, N. V.; Rozovskii, B. L.} (1979). Stochastic
  evolution equations. In \emph{Current problems in mathematics,}
  Akad. Nauk SSSR publisher, Vol. 14, pp. 71--147.

% \bibitem{KMN03} {\sc Kramar, M., Mugnolo, D. and Nagel, R.} (2003).
%   Semigroups for initial-boundary value problems. In \emph{Evolution
%     Equations 2000: Applications to Physics, Industry, Life Sciences
%     and Economics} (Proceedings Levico Terme 2000), Progress in
%   Nonlinear Differential Equations, Birkh\"auser, Basel, 277--297.

% \bibitem{KRT} {\sc Kruk, I., Russo, F. and Tudor, C.A.}  (2007).
%   Wiener integrals, Malliavin calculus and covariance structure
%   measure. {\em Journal of Functional Analysis} {\bf 249}, no. 1,
%   92--142.

% \bibitem{KwWo} {\sc Kwapien, S. and Woyczynski, W.} (1992). {\em Random
%     Series and Stochastic integrals: single and multiple. }
%   Probability and its Applications. Birkh\"auser Boston, Inc., Boston,
%   MA.

% \bibitem {MaTu}{\sc Maejima, M. and Tudor, C.A.} (2007). Wiener
%   integrals and a Non-Central Limit Theorem for Hermite processes.
%   {\em Stochastic Analysis and Applications}, \textbf{25} (5),
%   1043-1056.

%% \bibitem{Major} {\sc Major, P.} (2005). {\em Tail behavior of
%% multiple
%%     random integrals and $U$-statistics. }{Probability Surveys.}

%% \bibitem{MEJ93} {\sc Major, G., Evans, J.~D. and Jack, J.~J.}
%% (1993).
%%   Solutions for transients in arbitrarily branching cables: I.
%% Voltage
%%   recording with a somatic shunt. \emph{Biophys. J.} {\bf 65},
%% 423--449.

%% \bibitem{mandelbrot1968fbm} {\sc Mandelbrot, B.~B.  and Ness,
%%     J.~W.~V.}  (1968).  Fractional Brownian Motions, Fractional
%% Noises
%%   and Applications.  \emph{SIAM Review} {\bf 10}, 422--437.

%% \bibitem{MSK00} {\sc Manwani, A., Steinmetz, P.~N. and Koch, C.}
%%   (2000).  Channel noise in excitable neuronal membranes. In:
%%   \emph{Advances in neural information processing systems}, MIT
%% Press,
%%   Cambridge, MA, 143--149.

%% \bibitem{McI82} {\sc McIntosh, A.} (1982). On representing closed
%%   accretive sesquilinear forms as $(A\sp{1/2}u,\,A\sp{\ast 1/2}v)$.
%%   {\em Nonlinear partial differential equations and their
%%   applications. Coll\'ege de France Seminar, Vol. III (Paris,
%%   1980/1981)}, pp. 252--267, Res. Notes in Math., 70, Pitman,
%% Boston,
%%   Mass.-London, 1982.

\bibitem{LaL}{
    \sc Ladas, G. E. and Lakshmikantham, V.},
     {\em Differential equations in abstract spaces,
     Mathematics in Science and Engineering, Vol. 85},
 Academic Press,
  New York,
      1972,


\bibitem{Masiero} {\sc Masiero, F.} (2010). A Stochastic Optimal
  Control Problem for the Heat Equation on the Halfline with Dirichlet
  Boundary-Noise and Boundary-Control. {\em Appl. Math. Optim.} {\bf
    62}, 253--294.

\bibitem{Mas} {\sc Maslowski B.} (1995). Stability of semilinear
  equations with boundary and pointwise noise. \emph{Ann. Scuola
    Norm. Sup. Pisa Cl. Sci.} {\bf 22} 55--93.

\bibitem{miller/1974} {\sc Miller, R.K.} (1974). Linear Volterra
  integrodifferential equations as semigroups. \emph{Funkcial. Ekvac.}
  {\bf 17}, pp.39--55.

%% \bibitem{Mug05} {\sc Mugnolo, D.} (2005). Asymptotics of semigroups
%%   generated by operator matrices. Ulmer Seminare {\bf 10}, 299--311.

%% \bibitem{Mu07} {\sc Mugnolo, D.}, (2007). Gaussian estimates for a
%%   heat equation on a network. \emph{Netw. Heter. Media} {\bf 2},
%%   55--79.

%% \bibitem{Mug08} {\sc Mugnolo, D.} (2008). A variational approach to
%%   strongly damped wave equations. In: \emph{Functional Analysis and
%%     Evolution Equations -- The G\"unter Lumer Volume}, Birkh\"auser,
%%   Basel, 503--514.


%% \bibitem{MR07} {\sc Mugnolo, D.  and Romanelli, S.} (2007). Dynamic
%%   and generalized Wentzell node conditions for network equations.
%%   \emph{Math. Meth. Appl. Sciences} {\bf 30}, 681--706.

% \bibitem{Na89} {\sc Nagel, R.} (1989). {\em Towards a ``matrix
%     theory'' for unbounded operator matrices}, Math. Z. {\bf 201},
%   57--68.

\bibitem{N} {\sc Nualart, D.} (2006). {\em Malliavin Calculus and
  Related Topics.} Second Edition. Springer.

\bibitem{Nunziato1971} {\sc Nunziato, J.} (1971).  On heat conduction
  in materials with memory.  {\em Quart. Appl. Math.} {\bf 29},
  187--204.

%% \bibitem{OTHB00} {\sc Odde, D.~J., Tanaka, E.~M., Hawkins S.~S. and
%%     Buettner, H.~M.} (2000). Stochastic dynamics of the nerve growth
%%   cone and its microtubules during neurite outgrowth.
%% \emph{Biotechn.
%%   Bioengin.} {\bf 50}, 452--461.

%% \bibitem{Ou05} {\sc Ouhabaz, E.~M.} (2005). {\em Analysis of Heat
%%     Equations on Domains.} LMS Monographs Series {\bf 31}, Princeton
%%   University Press, Princeton.

\bibitem{Pa} {\sc Pardoux, E.} (1981/82) Nonlinear filtering,
  prediction and smoothing equations {\em Stochastics} {\bf 6 },
  no. 3-4, 193--231.

\bibitem{PaPe} {\sc Pardoux, E.  and Peng, S.} (1990). Adapted
  solution of a backward stochastic differential equation. {\em
    Systems Control Lett.}, {\bf 14}, pp.55--61.

\bibitem{Pardoux/Peng/92} {\sc Pardoux, É.;  Peng, S. } Backward stochastic differential equations and quasilinear parabolic
 partial differential equations.
 {\em Stochastic partial differential equations and their applications
 (Charlotte, NC, 1991), 
 200--217, Lecture Notes in Control and Inform. Sci.}, {\bf 176}, Springer, Berlin,  1992. 

\bibitem
{Pardoux/Rascanu/99}
{\sc A Pardoux, Etienne, Rascanu, Aurel}
 Backward stochastic variational inequalities
{\em Stochastics Stochastics Rep.}
{\bf 67},
 1999,
 3-4,
 159--167,

%L MR1729473 (2000h:60061)

%% \bibitem{PT:00} {\sc Pipiras, V.  and Taqqu, M.~S.} (2000).
%%   Integration questions related to fractional Brownian motion. \PTRF
%%   {\bf 118}, no. 2, 251--291.

\bibitem{pruss} {\sc Pr\"uss, J.} (1993). {\em Evolutionary integral
    equations and applications.}  Monographs in Mathematics,
  87. Birkh�user Verlag, Basel.

\bibitem{Ro04} {\sc Royer, M.} BSDEs with a random terminal time driven by a monotone generator and
 their links with PDEs.
{\em  Stoch. Stoch. Rep. }{\bf 76}  (2004),  no. 4, 281--307.
%% \bibitem{Ra59} {\sc Rall, W.} (1959). Branching dendritic trees and
%%   motoneurone membrane resistivity. \emph{Exp. Neurol.} {\bf 1},
%%   491--527.

%% \bibitem{Ra60} {\sc Rall, W.} (1960). Membrane potential transients
%%   and membrane time constant of motoneurons. \emph{Exp. Neurol.} {\bf
%% 2},
%%   503--532.

%% \bibitem{RR73} {\sc Rall, W.  and Rinzel, J.} (1973). Branch input
%%   resistance and steady attenuation for input to one branch of a
%%   dendritic neuron model. \emph{Biophys. J.} {\bf 13}, 648--688.

%% \bibitem{RR74} {\sc Rall, W.  and Rinzel, J.} (1974). Transient
%%   response in a dendritic neuron model for current injected at one
%%   branch.  \emph{Biophys. J.} {\bf 14}, 759--790.

%% \bibitem{RanDin03} {\sc Rangarajan, G.  and Ding, M.} (2003). {\em
%%     Processes with {L}ong {R}ange {Correlations}: {T}heory and
%%     {A}pplications.}  Lect. Note Phys. {\bf 621}, Springer-Verlag,
%%   Berlin.

%% \bibitem{ricciardi} {\sc Ricciardi, Luigi M.} (1977). {\em Diffusion
%%     Processes and realted yopics in biology.} Lecture Notes in
%%   Biomathematics 14. Springer-Verlag, Berlin-New York.

%% \bibitem{ringach-malone:2007} {\sc Ringach, D.~L. and Malone, B.~J.}
%%   (2007).  The Operating Point of the Cortex: Neurons as Large
%%   Deviation Detectors. \emph{J. Neurosci.} {\bf 27}, no.~29,
%%   7673--7683.

%% \bibitem{Rosenthal} {\sc Rosenthal, Jeffrey S.} (2006). {\em A first
%%     look at rigorous probability theory.} Second edition. World
%%   Scientific Publishing Co. Pte. Ltd., Hackensack, NJ.

%% \bibitem{SFS98} {\sc Schneidman, E., Freedman, B. and Segev, I.}
%%   (1998).  Ion-channel stochasticity may be critical in determining
%%   the reliability and precision of spike timing. \emph{Neural Comp.}
%%   {\bf 10}, 1679--1703.

%% \bibitem{Sc02} {\sc Scott, A.} (2002). {\em Neuroscience. A
%%     Mathematical Primer}, Springer-Verlag, New York.

%% \bibitem{StVa} {\sc Stroock, D. W. and Varadhan, S. R. S.}
%%   (1979). {\em Multidimensional diffusion processes.} Grundlehren der
%%   Mathematischen Wissenschaften [Fundamental Principles of
%%   Mathematical Sciences], 233. Springer-Verlag, Berlin-New York.

\bibitem{Sow} {\sc R.B. Sowers},(1994) {\em Multidimensional
    reaction-diffusion equations with white noise boundary
    perturbations.} Ann. Probab. {\bf 22}, 2071--2121.

% \bibitem{Ta1} {\sc Taqqu, M.} (1975). Weak convergence to the
%   fractional Brownian motion and to the Rosenblatt
%   process. \emph{Z. Wahrscheinlichkeitstheorie verw. Gebiete},
%   \textbf{31}, 287-302.

% \bibitem{Ta2} {\sc Taqqu, M.} (1979). Convergence of integrated
%   processes of arbitrary Hermite
%   rank. \emph{Z. Wahrscheinlichkeitstheorie verw. Gebiete},
%   \textbf{50}, 53-83.

% \bibitem{Tu} {\sc Tudor, C.A.} (2008).  Analysis of the Rosenblatt
%   process. \emph{ESAIM Probability and Statistics}, \textbf{12},
%   230-257.

\bibitem{Yz} {\sc Yong, J. and Zhou, X. Y.} (1999).  {\em Stochastic
    controls.}  Hamiltonian systems and HJB equations. Applications of
  Mathematics (New York), 43. Springer-Verlag, New York.

\end{thebibliography}

\end{document}